\newtheorem{Theorem}{Theorem}[section]
\newtheorem{Lemma}[Theorem]{Lemma}
\newtheorem{Remark}[Theorem]{Remark}
\theoremstyle{definition}
\newtheorem{assump}{Assumption}
\newenvironment{myassump}[2][]
{\begin{assump}[#1]}
	{\end{assump}}
\def \btheta{\boldsymbol{\theta}}
\title{$\mathcal{CIRFE}$: A Distributed Random Fields Estimator}
\author{Anit~Kumar~Sahu,~\IEEEmembership{Student Member,~IEEE}, Dusan~Jakovetic,~\IEEEmembership{Member,~IEEE} and Soummya~Kar,~\IEEEmembership{Member,~IEEE}
	\thanks{
		A. K. Sahu and S. Kar are with the Department of Electrical and Computer Engineering, Carnegie Mellon University, Pittsburgh, PA 15213, USA (email:anits@andrew.cmu.edu, soummyak@andrew.cmu.edu). D. Jakovetic is with the Department of Mathematics and Informatics, University of Novi Sad, Serbia (e-mail: djakovet@uns.ac.rs).
		
		The work of A. K. Sahu and S. Kar was supported in part by NSF under grants CCF-1513936. The work of D. Jakovetic was supported by Ministry of Education, Science and
		Technological Development, Republic of Serbia, grant no. 174030.
	}}
\begin{document}

		\maketitle
		\thispagestyle{empty}
		\pagestyle{empty}

		%%%%%%%%%%%%%%%%%%%%%%%%%%%%%%%%%%%%%%%%%%%%%%%%%%%%%%%%%%%%%%%%%%%%%%%%%%%%%%%%
		\begin{abstract}
			\noindent This paper presents a communication efficient distributed algorithm, $\mathcal{CIRFE}$ of the \emph{consensus}+\emph{innovations} type, to estimate a high-dimensional parameter in a multi-agent network, in which each agent is interested in reconstructing only a few components of the parameter. This problem arises for example when monitoring the high-dimensional distributed state of a large-scale infrastructure with a network of limited capability sensors and where each sensor is tasked with estimating some local components of the state. At each observation sampling epoch, each agent updates its local estimate of the parameter components in its interest set by simultaneously processing the latest locally sensed information~(\emph{innovations}) and the parameter estimates from agents~(\emph{consensus}) in its communication neighborhood given by a time-varying possibly sparse graph. Under minimal conditions on the inter-agent communication network and the sensing models, almost sure convergence of the estimate sequence at each agent to the components of the true parameter in its interest set is established. Furthermore, the paper establishes the performance of $\mathcal{CIRFE}$ in terms of asymptotic covariance of the estimate sequences and specifically characterizes the dependencies of the component wise asymptotic covariance in terms of the number of agents tasked with estimating it. Finally, simulation experiments demonstrate the efficacy of $\mathcal{CIRFE}$.
			
		\end{abstract}
		
		\begin{IEEEkeywords}
			Distributed Estimation, Consensus Algorithms, Distributed Inference, Random Fields, Stochastic Approximation
		\end{IEEEkeywords}
		
		%%%%%%%%%%%%%%%%%%%%%%%%%%%%%%%%%%%%%%%%%%%%%%%%%%%%%%%%%%%%%%%%%%%%%%%%%%%%%%%%
		\section{INTRODUCTION}
		%\noindent Large scale cyber-physical systems~(CPS) have gained prominence recently. A desirable trait of a CPS is to be energy efficient so as to have a longer operational life. Owing to the the deployment of such systems in a large-scale fashion spread over a large area, the sensing and the data collection are inherently distributed in such systems. Due to the large-scale deployment of such systems compounded with the inherent distributed nature of the data collection, a centralized coordinator based architecture is undesirable. A typical entity of a CPS with on board sensing and computational capabilities is plagued by power hungry communication needs. Also, relevant to a large-scale CPS is the associated high-dimensional state of the CPS, which the network as a whole wants to reconstruct. Moreover, the high-dimensional state reconstruction needs to be done in a recursive fashion, which in essence points to the fact that the state estimate needs to be updated at each entity as and when a new datum is collected. The high-dimensional state of the CPS in conjunction with the power hungry communication bottleneck makes the problem at hand very challenging.
		\noindent In this paper, we are interested in distributed inference of the parameterized state of the large-scale cyber physical systems~(CPS) like sensor networks monitoring a spatially distributed field, or CPSs where physical entities with sensing capabilities are deployed over large areas. Relevant applications include, minimum cost flow problems~(see, for example \cite{ahuja1988network}), distributed model predictive control~(see, for example \cite{mota2015distributed,halvgaard2016distributed}), distributed localization~(see, for example \cite{Khan-DILAND-TSP-2010}). An important example of the systems of interest is the smart grid--a large network of generators and loads instrumented with, for example, phasor measurement units (PMUs)\cite{kekatos2013distributed,de2010synchronized}. Our goal is to reconstruct the physical field or state of the CPS that is represented by a vector parameter. The structure of the physical layer is reflected through the coupling among the observation sequences across different nodes. Suppose, for the purpose of illustration, corresponding to each field location, there is a low-power inexpensive sensor monitoring the location. The noisy sensor measurement at a location in the field is possibly a function of its own component and \emph{neighboring} field components. As an example, in the smart grid context, a sensor at a node (location) may obtain a measurement of the power flowing into that node, which in turn is a function of the field components (e.g., voltages, angles) at that node and neighboring nodes. This coupling among parameter components in the measurements will be referred to as the \emph{physical coupling} in the sequel. However, due to possible lack of identifiability, in order to come up with a provably consistent estimates of the parameter components of interest, each agent exchanges information with its neighborhood which conforms to a pre-assigned inter-agent communication graph. The inter-agent communication graph forms the cyber layer of the system and is different from that of the physical layer, i.e., the coupling structure among the parameter components induced by the distributed measurement model. Due to the high-dimensionality of the field, reconstructing the entire field at each agent may be too taxing, and hence, agents may only be interested in estimating certain components of the parameter field locally; furthermore, the components of interest at a given agent, referred to as the \emph{interest set} of the agent, varies from agent to agent. More concretely, the observation model we adopt in this paper is of the form,
		{\small\begin{align*}
			\mathbf{y}_{n}(t) = \mathbf{H}_{n}\btheta^{\ast}+\mathbf{\gamma}_{n}(t),
			\end{align*}}
		where $\btheta^{\ast}\in\mathbb{R}^{N}$, and the dimension of $\btheta^{\ast}$ corresponds to the number of physical locations being monitored in the field, $\mathbf{H}_{n}$ is a~(fat) matrix that abstracts the coupling in measurements for agent $n$ with the state values at other nodes and $\mathbf{\gamma}_{n}(t)$ represents the observation noise~(to be specified later). Existing distributed estimation schemes, such as in \cite{kar2011convergence,kar2013consensus+,Mesbahi-parameter,Giannakis-est,Sayed-LMS,Stankovic-parameter,Giannakis-LMS,Nedic-parameter,sahu2016distributed}, aim to reconstruct the entire parameter at each node of the networked setup, thus reflecting a homogeneous objective across all nodes. However, in this paper, we consider a distributed estimation scheme which allows agents to pursue a heterogeneous objectives, in which agents' only estimate a few components of the parameter vector $\btheta^{\ast}$ corresponding to their \emph{interest sets}. Accounting for heterogeneity is highly relevant in practice. The heterogeneous objectives across agents lets us extend the notion of \emph{consensus} to \emph{subspace consensus}, where the agents reach consensus with respect to entries of the parameter that lie in the intersection of their interest sets. The second level of heterogeneity in our proposed algorithm is exhibited in terms of heterogeneous agent sensing models and noise statistics. In practice, different types of devices~(agents) in the network may have very different ``sensing quality''. For instance, with state estimation in smart grids, phasor measurement units (PMUs) can have much smaller variance than standard sensing devices.
		Owing to the high-dimensionality of the state vector and limited storage and processing capabilities in the individual entities of a large-scale CPS, exchanging high-dimensional estimates may be undesirable. Hence, in this paper, we consider a distributed estimator where each agent only infers a fraction of the field, but through cooperation and under the appropriate conditions generates provably consistent estimates of this fraction of the field. In existing distributed estimation schemes such as in \cite{kar2011convergence,kar2013consensus+,Mesbahi-parameter,Giannakis-est,Sayed-LMS,Stankovic-parameter,Giannakis-LMS,Nedic-parameter,sahu2016distributed}, the global model information in terms of the sensing models of all the agents are assumed to be inaccessible for any agent. However, the aforementioned setups subsume the knowledge of the dimension of the state vector to be estimated and hence adapt the storage requirements at each agent to cater to the exact dimension of the state vector. In contrast, in this paper, we present a distributed estimation algorithm of the $\emph{consensus}+\emph{innovations}$ form~(\hspace{-0.5pt}\cite{kar2011convergence,kar2013consensus+}), namely $\mathcal{CIRFE}$, \emph{consensus+innovations} Random Fields Estimator, where each agent reconstructs only a subset of the field by simultaneously processing information obtained from its neighbors~(\emph{consensus}) and the latest sensed information~(\emph{innovation}). It is of particular interest, that the information about the state vector is constrained to neighborhoods; in that, an agent has only information about the components of the state vector which potentially affect its own measurements, alleviating storage and model knowledge requirements.\\
		Our main contributions are as follows:\\
		\noindent \textbf{Main Contribution 1:}~We propose a scheme, namely $\mathcal{CIRFE}$,  where each entity reconstructs only a subset of the components of the state modeled by a vector parameter, and thereby also reducing the dimension of messages being communicated among the agents. Under mild conditions of the connectivity of the network, we establish consistency of the estimate sequence at each agent with respect to the components of the parameters in its interest set. The proposed scheme allows heterogeneity in terms of agents' objectives, while still allowing for inter-agent collaboration. \par
		\noindent \textbf{Main Contribution 2:}~Technically, the consensus+innovations type approach that we employ for state reconstruction in the current setting constitutes a mixed time-scale stochastic approximation procedure \cite{nevelson1973stochastic}. We explicitly evaluate the asymptotic covariance of the component wise estimate sequences at each agent. The obtained asymptotic covariance is heterogeneous in terms of scaling of the variances of the components of the parameter based on the number of agents interested in reconstructing a particular component. To the best of our knowledge, this is the first asymptotic covariance evaluation explicitly in terms of the number of agents interested in reconstructing entries of the state vector for distributed estimation of high-dimensional fields, i.e., when each node is interested only in a subset of the vector parameter. \\
		\noindent\textbf{Related Work:} Relevant distributed estimation literature can be classified primarily into two types. The first type includes schemes which involve single snapshot data collection followed by inter-agent fusion through consensus type protocols~(see, for example,\cite{Mesbahi-parameter,Giannakis-est,Bertsekas-survey,olfatisaberfaxmurray07,jadbabailinmorse03}). The second type includes estimation schemes where the sensing and the processing of the information occur at the same rate and sequentially in time~(see, for example \cite{Sayed-LMS,Stankovic-parameter,Giannakis-LMS,Nedic-parameter,nedic2015nonasymptotic,jadbabaie2012non,shahrampour2013exponentially,mateos2009distributed,mateos2012distributed,weng2014efficient}). Representative approaches of this latter class are $\emph{consensus}+\emph{innovations}$ type~(\hspace{-0.5pt}\cite{kar2011convergence,sahu2016distributed})~and the \emph{diffusion} type~(\hspace{-0.5pt}\cite{cattivelli2010diffusion,cattivelli2008diffusion}) algorithms. Distributed inference algorithms for random fields have been proposed in literature, see, for example \cite{Khan-Moura,das2015distributed}. Reference \cite{Khan-Moura} considers the estimation of a time-varying random field pertaining to a linear observation model, where each agent reconstructs only a few components of the field. However, in contrast with \cite{Khan-Moura}~where the incorporation of new sensed information is followed by multiple rounds of consensus, the proposed algorithm in this paper simultaneously fuses the neighborhood information and the current observation albeit for a static field. In contrast with \cite{das2015distributed}, where each agent tries to reconstruct the entire time-varying random field, the proposed algorithm reconstructs only a subset of the components of the entire field at each agent and the information exchange entails a low dimensional vector instead of the entire parameter. Distributed estimation schemes involving objectives where agents reconstruct only a few entries of the parameter have also been studied in \cite{kekatos2013distributed,bogdanovic2014distributed,nassif2017diffusion}. In particular, as compared to  \cite{kekatos2013distributed,bogdanovic2014distributed,nassif2017diffusion} which consider static connected communication graphs, in this paper we consider time-varying stochastic communication graphs that are connected on average. In \cite{nassif2017diffusion} so as to facilitate adaptation, the algorithm employs constant step sizes and the residual mean square error is characterized in terms of the step size only. However in comparison, the asymptotic variance of the estimator proposed in this paper reveals the scaling with respect to the number of agents interested in reconstructing a particular entry of the parameter. A field estimation scheme in a fully distributed setup of the type studied in this paper with arbitrary connected inter-agent communication topology where agents reconstruct only a subset of the physical field was also proposed in \cite{kar2010large}~(Chapter 3). The current work is inspired by \cite{kar2010large} and generalizes the development in \cite{kar2010large} in several fronts to achieve better estimation performance. In \cite{kar2010large}, a \emph{single time-scale} consensus+innovations algorithm pertaining to a linear observation model was proposed and the consistency and asymptotic normality\footnote{An estimate sequence is asymptotically normal if its $\sqrt{t}$ scaled error process, i.e., the difference between the sequence and the true parameter converges in distribution to a normal random variate.} of the estimator was established. By, a \emph{single time-scale} consensus+innovations algorithm, we mean algorithms where the consensus and innovation potentials are controlled by the same time-decaying sequence. The performance of the single time-scale version of the consensus+innovations distributed estimation algorithm in terms of asymptotic variance depends on the network topology and is thus affected when the connectivity of the network is relatively poor. In contrast with \cite{kar2010large}, we propose a $\emph{consensus}+\emph{innovations}$ algorithm, where the \emph{consensus} and \emph{innovations} terms are weighed through different carefully crafted time-varying sequences. In this paper, we not only establish the consistency and asymptotic normality of the parameter estimate sequence but also, due to the employed mixed time-scale stochastic approximation obtain the asymptotic covariance of the estimate sequences to be independent of the particular communication network instance.\\
		\noindent\textbf{Paper Organization :}
		The rest of the paper is organized as follows. Spectral graph theory and notation are discussed next. The sensing model and the preliminaries are discussed in Section \ref{sec:sens_prel}. Section \ref{sec:cirfe} presents the proposed distributed estimation algorithm, while Section \ref{sec:main_res} and Section \ref{sec:proof_mainres} concerns with the main results of the paper and the proof of the main results respectively. The simulation experiments for the proposed algorithm are presented in Section \ref{sec:sim}. Finally, Section \ref{sec:conc} concludes the paper.\\
		\noindent\textbf{Notation.}
		\noindent We denote by~$\mathbb{R}$ the set of reals,  and by~$\mathbb{R}^{k}$ the $k$-dimensional Euclidean space. Vectors and matrices are in bold faces. We also denote by $\mathbf{A}_{ij}$ or $[\mathbf{A}]_{ij}$, the $(i,j)$-th entry of a matrix $\mathbf{A}$; $\mathbf{a}_{i}$ or $[\mathbf{a}]_{i}$ the $i$-th entry of a vector $\mathbf{a}$. The symbols $\mathbf{I}$ and $\mathbf{0}$ are the $k\times k$ identity matrix and the $k\times k$ zero matrix, respectively, the dimensions being clear from the context. The vector $\mathbf{e_{i}}$ is the $i$-th column of $\mathbf{I}$, also referred to as a canonical vector. The symbol $\top$ stands for matrix transpose. The operator $\otimes$ denotes the Kronecker product. The operator $|| . ||$ applied to a vector is the standard Euclidean $\mathcal{L}_{2}$ norm, while when applied to matrices stands for the induced $\mathcal{L}_{2}$ norm, which is equal to the spectral radius for symmetric matrices. The cardinality of a set $\mathcal{S}$ is $\left|\mathcal{S}\right|$. Finally, $diag(\mathbf{v})$ denotes the diagonal matrix with its diagonal elements as $\mathbf{v}$.
		All inequalities involving random variables are to be interpreted almost surely~(a.s.). \\
		\noindent\textbf{Spectral Graph Theory.}
		\noindent The inter-agent communication network is a simple\footnote{A graph is said to be simple if it is devoid of self loops and multiple edges.} undirected graph $G=(V, E)$, where $V$ denotes the set of agents or vertices with cardinality $|V|=N$, and $E$ the set of edges with $|E|=M$. If there exists an edge between agents $i$ and $j$, then $(i,j)\in E$. A path between agents $i$ and $j$ of length $m$ is a sequence ($i=p_{0},p_{1},\cdots,p_{m}=j)$ of vertices, such that $(p_{t}, p_{t+1})\in E$, $0\le t \le m-1$. A graph is connected if there exists a path between all possible agent pairs.
		The neighborhood of an agent $n$ is given by $\Omega_{n}=\{j \in V|(n,j) \in E\}$.
		\noindent The degree of agent $n$ is given by $d_{n}=|\Omega_{n}|$. The structure of the graph is represented by the symmetric $N\times N$ adjacency matrix $\mathbf{A}=[A_{ij}]$, where $A_{ij}=1$ if $(i,j) \in E$, and $0$ otherwise. The degree matrix is given by the diagonal matrix $\mathbf{D}=diag(d_{1}\cdots d_{N})$. The graph Laplacian matrix is defined as $\mathbf{L}=\mathbf{D}-\mathbf{A}$.
		\noindent The Laplacian is a positive semidefinite matrix, hence its eigenvalues can be ordered and represented as $0=\lambda_{1}(\mathbf{L})\le\lambda_{2}(\mathbf{L})\le \cdots \lambda_{N}(\mathbf{L})$.
		\noindent Furthermore, a graph is connected if and only if $\lambda_{2}(\mathbf{L})>0$ (see~\cite{chung1997spectral}~for instance).\\
		\section{SENSING MODEL AND PRELIMINARIES}
		\label{sec:sens_prel}
		\noindent Consider $N$ physical agents monitoring a field over a large physical area. Each agent $n$ is associated with a scalar state $\theta_{n}^{\ast}$, which represents the field intensity parameter at its location. The agents are equipped with sensing capabilities. We assume each agent observes a time-series of measurements, given by noisy linear functions of its state and the states of \emph{neighboring} agents. Due to this coupling in the observations, an agent should cooperate with neighbors to reconstruct its own state. For simplicity, we assume that the individual agent states are scalars. Our results can be generalized to vector valued states, though at the cost of extra notation. The observation at each agent is of the form:
		{\small\begin{align}
			\label{RLU-prob1}
			\mathbf{y}_{n}(t)=\mathbf{H}_{n}\mathbf{\btheta}^{\ast}+\mathbf{\gamma}_{n}(t),
			\end{align}}
		\noindent where $\mathbf{H}_{n}\in\mathbb{R}^{M_{n}\times N}$ is a sparsifying~(to be clarified soon) sensing matrix, $\{\mathbf{y}_{n}(t)\}$ is a $\mathbb{R}^{M_{n}}$-valued observation sequence for the $n$-th agent and for each $n$ where possibly $M_{n} \ll  N$, $\left\{\mathbf{\gamma}_{n}(t)\right\}$ is a zero-mean temporally independent and identically distributed~(i.i.d.)~ noise sequence with nonsingular covariance matrix $\mathbf{R}_{n}$. It is to be noted that the assumption that the dimension of the parameter $\boldsymbol{\theta}^{\ast}$ is equal to the number of agents, $N$, is simply made for clarity of presentation. In particular, all our proofs and assertions will continue to hold with appropriate modifications if the dimension of the global parameter is different from $N$.
		\begin{myassump}{A1}
			\label{as:1}
			\emph{There exists $\epsilon_{1}>0$, such that, for all $n$, $\mathbb{E}_{\btheta}\left[\left\|\gamma_{n}(t)\right\|^{2+\epsilon_{1}}\right]<\infty$.}
		\end{myassump}
		\noindent The above assumption encompasses a broad class of noise distributions in the setup. The heterogeneity of the setup is exhibited in terms of the sensing matrix and the noise covariances at the agents. We now formalize an assumption on global model observability.\\
		\begin{myassump}{A2}
			\label{as:2}
			\emph{The matrix $\mathbf{G}=\sum_{n=1}^{N}\mathbf{H}_{n}^{\top}\mathbf{R}_{n}^{-1}\mathbf{H}_{n}$ is full rank}.
		\end{myassump}
		\noindent Assumption \ref{as:2} is crucial for our distributed setup. It is to be noted that such an assumption is needed for even a setup with a centralized node which has access to all the data samples at each of the agent nodes at each time. Assumption \ref{as:2} ensures that if a hypothetical fusion center could stack all the data samples together at any time $t$, it would have sufficient information so as to be able to unambiguously estimate the parameter of interest. Hence, the requirement for this assumption naturally extends to our distributed setup.
		\noindent As far as reconstructing the parameter $\btheta$ is concerned, there is an inherent scalability issue as the dimension of the parameter scales with the size of the network. Owing to the ad-hoc nature of setups as described above and observations being made at different agents in a sequential manner, one has to resort to recursive message-passing schemes while conforming to a communication protocol specified by a inter-agent communication graph. Given the possibly high-dimensional state of the field, it is not desirable and communication-wise feasible to exchange the high-dimensional data in the form of parameter estimates and for each agent to estimate the entire vector. Before, going over specifics of our algorithm, we next review recursive estimation both in the centralized and distributed setups.
		\subsection{Preliminaries}
		\label{subsec:prel}
		\noindent  In this section, we go over the preliminaries of classical distributed estimation.\\
		%\textbf{Oracle Estimation:}\\
		%In the setup described above in \eqref{RLU-prob1}, if a hypothetical oracle node having access to the data samples at all the nodes at all times were to conduct the parameter estimation in an iterative manner, it would do so in the following way:
		%\begin{align*}
		%&\mathbf{x}_{c}(t+1)=\mathbf{x}_{c}(t)\nonumber\\&+\underbrace{\frac{a}{t+1}\sum_{n=1}^{N}\mathbf{H}_{n}^{\top}\mathbf{\Sigma}_{n}^{-1}\left(\mathbf{y}_{n}(t)-\mathbf{H}_{n}\mathbf{x}_{c}(t)\right)}_{\text{Global Innovation}},
		%\end{align*}
		%where $a$ is a positive constant. It is well known from standard stochastic approximation results that the sequence $\{\mathbf{x}_{c}(t)\}$ generated from the update above converges almost surely to the true parameter $\btheta$. Moreover, the sequence $\{\mathbf{x}_{c}(t)\}$ is asymptotically normal, i.e,
		%\begin{align*}
		%\sqrt{t+1}\left(\mathbf{x}_{c}(t)-\btheta\right)\overset{\mathcal{D}}{\Longrightarrow}\mathcal{N}\left(0,\left(N\mathbf{\Gamma}\right)^{-1}\right),
		%\end{align*}
		%where $\mathbf{\Gamma}=\frac{1}{N}\sum_{n=1}^{N}\mathbf{H}_{n}^{\top}\mathbf{\Sigma}_{n}^{-1}\mathbf{H}_{n}$. The above established asymptotic normality also points to the conclusion that the mean square error~(MSE) decays as $\Theta(1/t)$.\\
		%However, such an oracle based scheme may not be implementable in our distributed multi-agent setting with time-varying sparse inter-agent interaction primarily due to the fact that the desired global innovation computation requires instantaneous access to the entire set of network sensed data at all times at the oracle.\\
		\noindent \textbf{Distributed Estimation:}\\
		\noindent In the setup described above in \eqref{RLU-prob1}, if a hypothetical fusion center having access to the data samples at all nodes at all times were to conduct the parameter estimation in a recursive manner, a (centralized) recursive least-squares type approach could be employed as follows:
		{\small\begin{align*}
			&\mathbf{x}_{c}(t+1)=\mathbf{x}_{c}(t)\nonumber\\&+\underbrace{\frac{a}{t+1}\sum_{n=1}^{N}\mathbf{H}_{n}^{\top}\mathbf{\Sigma}_{n}^{-1}\left(\mathbf{y}_{n}(t)-\mathbf{H}_{n}\mathbf{x}_{c}(t)\right)}_{\text{Global Innovation}},
			\end{align*}}
		where $a$ is a positive constant such that $a> N/\left(\lambda_{min}\left(\sum_{n=1}^{N}\mathbf{H}_{n}^{\top}\mathbf{R}_{n}^{-1}\mathbf{H}_{n}\right)\right)$.
		\noindent However, such a fusion center based scheme may not be implementable in our distributed multi-agent setting with time-varying sparse inter-agent interaction primarily due to the fact that the desired global innovation computation requires instantaneous access to the entire set of network sensed data at all times at the fusion center. Moreover, the fusion center intends to reconstruct the entire high-dimensional state and thus, maintains a $N$-dimensional estimate at all times.
		\noindent If in the case of a distributed setup, an agent $n$ in the network were to replicate the centralized update by replacing the global innovation in accordance with its local innovation, the update for the parameter estimate becomes
		{\small\begin{align*}
			&\widehat{\mathbf{x}}_{n}(t+1)=\widehat{\mathbf{x}}_{n}(t)\nonumber\\&+\underbrace{\frac{a}{t+1}\mathbf{H}_{n}^{\top}\mathbf{\Sigma}_{n}^{-1}\left(\mathbf{y}_{n}(t)-\mathbf{H}_{n}\widehat{\mathbf{x}}_{n}(t)\right)}_{\text{Local Innovation}},
			\end{align*}}
		\noindent where $\left\{\widehat{\mathbf{x}}_{n}(t)\right\}$ represents the estimate sequence at agent $n$. The above update involves purely decentralized and independent local processing with no collaboration among the agents whatsoever. However, note that in the case when the data samples obtained at each agent lacks information about all the features, the parameter estimates would be erroneous and sub-optimal. As in the case of the fusion center based approach outlined above, each agent maintains a $N$-dimensional estimate at all times and hence the messages exchanged in the neighborhood are $N$-dimensional and could be very large depending on the size of the network.
		\noindent Hence, as a surrogate to the global innovation in the centralized recursions, the local estimators compute a local innovation based on the locally sensed data as an agent has access to information only in its neighborhood. The information loss at a node is compensated by incorporating an agreement or consensus potential into their updates which is then incorporated~(see, for example \cite{kar2011convergence,kar2013distributed,sahu2016distributedtsipn}) as follows:
		{\small\begin{align}
			\label{eq:benchmark_ci}
			&\mathbf{x}_{n}(t+1) = \mathbf{x}_{n}(t)-\underbrace{\frac{b}{(t+1)^{\delta_{1}}}\sum_{l\in\Omega_{n}(t)}\left(\mathbf{x}_{n}(t)-\mathbf{x}_{l}(t)\right)}_{\text{Neighborhood Consensus}}\nonumber\\&+\underbrace{\frac{a}{t+1}\mathbf{\Gamma}^{-1}\mathbf{H}_{n}^{\top}\mathbf{\Sigma}_{n}^{-1}\left(\mathbf{y}_{n}(t)-\mathbf{H}_{n}\mathbf{x}_{n}(t)\right)}_{\text{Local Innovation}},
			\end{align}}
		\noindent where $0 <\delta_{1}<1$, $\Omega_{n}(t)$ represents the neighborhood of agent $n$ at time $t$ and $a,b$ are appropriately chosen positive constants. In the above scheme, the information exchange among agent nodes is limited to the parameter estimates. It has been shown in previous work that under appropriate conditions~(see, for example \cite{kar2011convergence}), the estimate sequence  $\{\mathbf{x}_{n}(t)\}$ converges to $\btheta^{\ast}$ and is asymptotically normal, i.e.,
		{\small\begin{align*}
			\sqrt{t+1}\left(\mathbf{x}_{n}(t)-\btheta\right)\overset{\mathcal{D}}{\Longrightarrow}\mathcal{N}\left(0,\left(N\mathbf{\Gamma}\right)^{-1}\right),
			\end{align*}}
		where $\mathbf{\Gamma}=\frac{1}{N}\sum_{n=1}^{N}\mathbf{H}_{n}^{\top}\mathbf{R}_{n}^{-1}\mathbf{H}_{n}$ and $\overset{\mathcal{D}}{\Longrightarrow}$ denotes convergence in distribution. \noindent The above established asymptotic normality also points to the conclusion that the MSE decays as $\Theta(1/t)$. For future reference, we will refer to the distributed estimation approach in \eqref{eq:benchmark_ci} as the classical consensus+innovations approach. The aforementioned scheme, though optimal in terms of the asymptotic covariance entails the availability of global model information at each agent and exchange of the entire parameter estimate which in turn is $N$-dimensional among agents. Furthermore, due to the inherent spatial coupling in the observation sequence at each node with other nodes in its neighborhood, the availability of a particular entry of the state vector is localized to a small area. Hence, a large-scale deployment of such a system, would incorporate a significant delay for an agent to assimilate information about a particular entry of the state vector which is not local with respect to its neighborhood. Moreover, such a scheme requires the knowledge of the dimension of the state vector at each agent and storage of a high-dimensional local estimates, same as the size of the entire state vector. Such prior knowledge about attributes of the parameter such as dimension in conjunction with requirement for large memory at each agent might be practically infeasible owing to the ad-hoc nature and limited sensing, computation and storage capabilities of agents in a networked setup.\\ Thus, in both of the schemes above, specifically in the case which involves estimating a high-dimensional parameter, it might not be practical to estimate the entire parameter at each agent. In such a high-dimensional parameter estimation scheme, it is highly favorable to estimate only a few entries of the parameter based on the requirements of each agent, which could potentially reduce the dimensions of messages being exchanged in the network thereby reducing the implementation complexity considerably.\\
			\subsection{Connections with Distributed Optimization}
			\label{subsec:dist_opt}
			%Recently, other works in literature have looked at similar setups, albeit in the distributed optimization framework~\cite{mota2015distributed,alghunaim2017distributed}. 
			In principle, distributed stochastic optimization, with each node interested in a few entries of the optimization variable, is more general than the distributed estimation/random fields setup studied here. Indeed, one recovers the setup here with specializing the cost functions to be quadratic. However, this is true only for a very generic formulation of distributed stochastic optimization, where no strong convexity is assumed, each node is interested in a subset of the variable of interest, and the gradient (first order) information is subject to noise, and the underlying network is random. However, to the best of our knowledge, there is no present work that simultaneously addresses all of these aspects. For example, in \cite{mota2015distributed}, the setup involves a static network connected at all times with each agent having access to an incremental first order oracle, i.e., access to exact gradient information; the paper establishes convergence the iterate sequences to the optimizer, however, rates of convergence are not provided. In \cite{alghunaim2017distributed}, the authors consider coupled distributed stochastic optimization setups where the coupling is induced by interest sets of different agents over static networks. The setup in \cite{alghunaim2017distributed} encompasses estimation setups, given that \emph{global observability}\footnote{Global Observability refers to the condition, when the parameter can be reconstructed by stacking the samples collected from all the agents.} holds for each entry of the parameter in the respective clusters, which in turn is subsumed in the setup. Technically speaking, typical distributed optimization setups rely on \emph{local observability}\footnote{Local observability refers to the condition, where an agent can reconstruct its own state based on its own observation sequence.} without assuming \emph{local correctness}\footnote{Local correctness refers to the condition, where the set of local optimizers for the agent's local cost function includes the optimizer of the global objective.} at each agent. However, in the case of distributed estimation, the agents lack \emph{local observability} but preserve \emph{local correctness}. Moreover, the study of the mean square error in \cite{alghunaim2017distributed} reflects errors in terms of the step sizes only and does not reflect explicit dependence in terms of the number of agents collaborating to estimate a particular entry of the parameter. In comparison with \cite{mota2015distributed,alghunaim2017distributed}, we consider a distributed estimation setup over time-varying networks connected only on average and provide asymptotic characterization of the estimator as time goes to $\infty$. Furthermore, we specifically characterize the scaling of the asymptotic variance of each entry of the parameter in terms of the number of agents interested in reconstructing the particular entry in question. We also characterize the fundamental condition so as to generate consistent estimates of each entry of the parameter and show that connectivity of the network and global observability is not enough to ensure consistency of the estimates. We direct the reader to assumption A5 and the discussion after assumption A6 for a detailed illustration. In particular, we establish that connectivity of the subgraphs induced by the interest sets is a sufficient condition to enforce assumption A5. It is an open question as to what is a necessary condition (in terms of the network structure, sensing structure, and the interest sets' structure) so as to enforce assumption A5.
		\section{$\mathcal{CIRFE}$: DISTRIBUTED RANDOM FIELDS ESTIMATION}
		\label{sec:cirfe}
		\noindent In this section, we develop the algorithm $\mathcal{CIRFE}$.
		The parameter to be reconstructed which is the vector of states accumulated over the entire network is $\mathbf{\btheta}^{\ast}\in\mathbb{R}^{N}$. The sparsifying nature of  $\mathbf{H}_{n}$ in \eqref{RLU-prob1} is related to the coupling induced by the measurements in the field. To be specific, let us define $\widetilde{\mathcal{I}}_{n}$ as the set of agents whose states influence the measurement $\mathbf{y}_{n}(t)$ at agent $n$, i.e., $\widetilde{\mathcal{I}}_{n}$ collects the agents for which the corresponding columns of matrix $\mathbf{H}_n$ is non-zero. In what follows, we say an agent $n$ is physically coupled to an agent $l$ if the observation at agent $n$ is influenced by the state component $\theta^{\ast}_{l}$. Typically, $\widetilde{\mathcal{I}}_{n}$ is a small subset of the total number of agents $N$. Technically speaking, the above mentioned coupling induced by the measurements can be expressed in terms of an adjacency matrix, $\hat{\mathbf{A}}$, where $\hat{\mathbf{A}}_{nl}=1$ if $l\in\widetilde{\mathcal{I}}_{n}$ and $0$ otherwise.
		\noindent Now, that we have abstracted out the physical coupling~(physical layer) in the networked system under consideration, we discuss about the communication layer~(cyber layer), i.e., the inter-agent communication network and the associated communication protocol. Before getting into the communication protocol, we introduce \emph{interest sets} of agents' around which the communication protocol is built.
		\noindent We intend to formulate a distributed estimation procedure, where every agent wants to reconstruct the states of a small subset of the agents, which we refer to as the \emph{interest set} of the agent. In what follows, we point out that the $n$-th component of the field has a one-to-one correspondence with the $n$-th agent: this one-to-one correspondence is best illustrated by visualizing the agents to be (geographically) distributed in a field with $\theta^{\ast}_{n}$ representing the state of the field at the location of the $n$-th agent. Formally, the interest set of an agent is represented as $\mathcal{I}_{n}$. The interest set could vary from one agent to another. The interest sets can be arbitrary but need to satisfy the following assumption:
		\vspace{-5pt}
		\begin{myassump}{A3}
			\label{as:3}
			\emph{The set of agents physically coupled with agent $n$ is a subset of the interest set of agent $n$, i.e., $\widetilde{\mathcal{I}}_{n}\subset\mathcal{I}_{n}$}.
		\end{myassump}
		\noindent We assume without loss of generality that $\widetilde{\mathcal{I}}_{n}$ and hence $\mathcal{I}_{n}$ is non-empty for all $n$. (For illustration, see below the example after Assumption \ref{as:6}).
		%\noindent Furthermore, we assume that the interest set of every agent $n$ is non-empty.
		%\noindent The vector of states $\mathbf{\btheta}^{\ast}$ is rendered locally unobservable at all the agents. Hence, the following global observability condition is needed so as to enable each agent to obtain a consistent estimate of its interest set. We formalize the global observability assumption as follows:
		%\begin{myassump}{A4}
		%	\label{as:4}
		%	\emph{The sensing model is globally observable, i.e., any pair $\btheta, \acute{\btheta}$ of possible parameter instances in $\mathbb{R}^{N}$ satisfies $\sum_{n=1}^{N}\left\|\mathbf{f}_{n}(\btheta)-\mathbf{f}_{n}(\acute{\btheta})\right\|^{2}=0$ if and only if $\btheta=\acute{\btheta}$.}
		%\end{myassump}
		%\noindent Global observability corresponds to the centralized setting, where an estimator has access to the agents' observations at all times. The global observability assumption corresponds to the fact that if there was a centralized estimator with simultaneous access to all the agents' measurements, this centralized estimator would be able to reasonably estimate the underlying parameter.
		%\vspace{-15pt}
		\noindent We number the nodes~(equivalently, components of $\btheta$) in the interest sets of agents in increasing order. Thus, the interest set $\mathcal{I}_{n}$ at an agent $n$ can be considered to be a vector with dimension $\left|\mathcal{I}_{n}\right|$. For example, $\mathcal{I}_{n}(r)=p$ indicates that agent $p$ is the $r$-th agent in increasing order in the interest set $\mathcal{I}_{n}$. We also have that $\mathcal{I}_{n}^{-1}(p)=r$. Moreover, as each agent $n$ is only interested in reconstructing the states of agents in its interest set, the estimate at agent $n$, $\mathbf{x}_{n}(t)\in\mathbb{R}^{\left|\mathcal{I}_{n}\right|},~\forall~t$. At every time instant $t$, an agent $n$ simultaneously fuses information received from the neighbors and the latest sensed information to update its parameter estimate. However, as the interest set of agents in the neighborhood might not be the same as that of the agent itself, the information received from the neighbors needs censoring. Let the message received from agent $l$ at time $t$ be denoted by $\mathbf{x}_{l}(t)\in\mathbb{R}^{\left|\mathcal{I}_{l}\right|}$, where $l\in\Omega_{n}$. The censored message processed by agent $n$, $\mathbf{x}_{l,n}^{r}(t)\in\mathbb{R}^{\left|\mathcal{I}_{n}\right|}$ is generated as follows:
		{\small\begin{align}
			\label{eq:cens_1}
			\mathbf{e}_{j}^{\top}\mathbf{x}_{l,n}^{r}(t)=
			\begin{cases}
			\mathbf{e}_{\mathcal{I}_{l}^{-1}\left(\mathcal{I}_{n}(j)\right)}^{\top}\mathbf{x}_{l}(t) & \mathcal{I}_{n}(j)\in\mathcal{I}_{l}\\
			0 & \mbox{otherwise},
			\end{cases}
			\end{align}}
		\noindent where $\mathbf{e}_{j}$ and $\mathbf{e}_{\mathcal{I}_{l}^{-1}\left(\mathcal{I}_{n}(j)\right)}$ are canonical vectors with $\mathbf{e}_{j}\in\mathbb{R}^{\left|\mathcal{I}_{n}\right|}$ and $\mathbf{e}_{\mathcal{I}_{l}^{-1}\left(\mathcal{I}_{n}(j)\right)}\in\mathbb{R}^{\left|\mathcal{I}_{l}\right|}$.
		\noindent Agent $n$ only wants to use estimates of those states from an agent in its neighborhood which are common to their interest sets. Formally, with agent $l$, agent $n$ only wants to use estimates of the states in the set $\mathcal{I}_{n}\cap\mathcal{I}_{l}$. Similarly, while using the obtained estimate states from the neighbors, only those states in the set $\mathcal{I}_{n}\cap\mathcal{I}_{l}$ are updated. We also define the transformed estimate $\mathbf{x}_{l,n}^{s}(t)\in\mathbb{R}^{\left|\mathcal{I}_{n}\right|}$ at agent $n$, for each $l\in\Omega_{n}(t)$ as follows:
		{\small\begin{align}
			\label{eq:cens_2}
			\mathbf{e}_{j}^{\top}\mathbf{x}_{l,n}^{s}(t)=
			\begin{cases}
			\mathbf{e}_{j}^{\top}\mathbf{x}_{n}(t) & \mathcal{I}_{n}(j)\in\mathcal{I}_{l}\\
			0 & \mbox{otherwise}.
			\end{cases}
			\end{align}}
		\noindent where $j\in\{1,\cdots,\left|\mathcal{I}_{n}\right|\}$. The agent $n$ also incorporates the latest sensed information $\mathbf{y}_{n}(t)$ while updating the parameter estimate at each sampling epoch and only retains the components of interest, i.e., those in $\mathcal{I}_n$. For a given vector $\mathbf{z}\in\mathbb{R}^{\left|\mathcal{I}_{n}\right|}$, let $\mathbf{z}^{\mathcal{P}_{\mathcal{I}_n}}\in\mathbb{R}^{N}$ be the vector whose $j$-th component is given by
		{\small\begin{align}
			\label{eq:cens_3}
			\mathbf{e}_{j}^{\top}\mathbf{z}^{\mathcal{P}_{\mathcal{I}_n}}=
			\begin{cases}
			\mathbf{e}_{\mathcal{I}_{n}^{-1}(j)}^{\top}\mathbf{z} & j\in\mathcal{I}_{n}\\
			0 & \mbox{otherwise}.
			\end{cases}
			\end{align}}
		\noindent Finally, for a given vector $\mathbf{z}\in\mathbb{R}^{N}$, $\mathbf{z}_{\mathcal{I}_n}$ denotes the vector in $\mathbb{R}^{\left|\mathcal{I}_{n}\right|}$, where $\mathbf{e}_{j}^{\top}\mathbf{z}_{\mathcal{I}_n}=\mathbf{e}_{\mathcal{I}_{n}(j)}^{\top}\mathbf{z}$.

		\noindent We now introduce the algorithm $\mathcal{CIRFE}$ for distributed parameter estimation:
		{\small\begin{align}
			\label{eq:cirfe_1}
			&\mathbf{x}_{n}(t+1)=\mathbf{x}_{n}(t)-\underbrace{\sum_{l\in\Omega_{n}(t)}\beta_{t}\left(\mathbf{x}_{l,n}^{s}(t)-\mathbf{x}_{l,n}^{r}(t)\right)}_{\text{Neighborhood Consensus}}\nonumber\\&+\underbrace{\alpha_{t}\mathbf{H}_{n}^{\top}\mathbf{R}_{n}^{-1}\left(\mathbf{y}_{n}(t)-\mathbf{H}_{n}\mathbf{x}_{n}^{\mathcal{P}_{\mathcal{I}_n}}(t)\right)_{\mathcal{I}_{n}}}_{\text{Local Innovation}},
			\end{align}}
		\noindent where $\Omega_{n}(t)$ represents the neighborhood of agent $n$ at time $t$; and $\left\{\beta_{t}\right\}$ and $\left\{\alpha_{t}\right\}$ are the consensus and innovation weight sequences given by
		{\small\begin{align}
			\label{eq:ab}
			\beta_{t} = \frac{\beta_{0}}{(t+1)^{\delta_{1}}}, \alpha_{t}=\frac{a}{t+1},
			\end{align}}
		\noindent where $a, b > 0$ and $0<\delta_{1}<1/2-1/(2+\epsilon_{1})$ and $\epsilon_{1}$ was as defined in Assumption \ref{as:1}. It is to be noted that with the interest set of each agent being $\mathcal{I}_{n}=\{1,2,\cdots,N\}$, we have that the update in \eqref{eq:cirfe_1} reduces to the classical consensus+innovations update for linear parameter estimation schemes~(see, \cite{kar2013distributed} for example). Thus, the classical consensus+innovations parameter estimation scheme, is strictly a special case of the update in \eqref{eq:cirfe_1}. 
		
		%*****************
			\noindent We now illustrate the introduced setup and algorithm \eqref{eq:cirfe_1} with a $5$ agents network example in Fig.~\ref{5node-RLU}.
			\begin{figure}%[ptb]
				\begin{center}
					\includegraphics[height=1in, width=1in ] {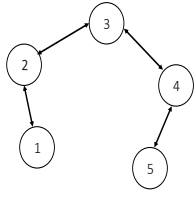} \caption{A network example emphasizing the notion of structural observability.}
					\label{5node-RLU}
				\end{center}
			\end{figure}
			\noindent Each node $n$ corresponds to a physical component $\theta_{n}^{\ast}$. Thus, $\btheta^{\ast}\in\mathbb{R}^{5}$. The solid lines connecting the nodes correspond to the inter-node communication pattern. Each node observes a noisy scalar functional. In particular, we assume
			{\small\begin{align}
				\label{RLU-dis2}
				&y_{3}(t)=\frac{1}{3}\left(\theta_{2}^{\ast}+\theta_{3}^{\ast}+\theta_{4}^{\ast}\right)+\gamma_{3}(t)\nonumber\\
				&y_{n}(t)=\theta_{n}^{\ast}+\gamma_{n}(t), n=1,2,4,5.
				\end{align}}
			\noindent Note that then the noise covariance matrix 
			$\mathbf{R}_n$ is a positive scalar, $n=1,2,...,5$. 
			Also, for $n \neq 5$, $\mathbf{H}_n$ is a 5-dimensional (row)  
			vector with all entries equal to zero except the $n$-th entry 
			which equals one. On the other hand, 
			$\mathbf{H}_3 = [\,0,\,1/3,\,1/3,\,1/3,\,0\,]$.  
			we have that $\widetilde{\mathcal{I}_{n}}= \{n\}$ for $n=1,2,4,5$, and $\widetilde{\mathcal{I}_{3}}= \{2,3,4\}$.
			Let us also assume that the agents' interest sets are
			given by ${\mathcal{I}_{n}} = \widetilde{\mathcal{I}_{n}}$, for each $n=1,2,...,5$.
			For notational simplicity, we omit time index~$t$ when writing the
			agents' estimates; that is, we write $\mathbf{x}_n$
			in place of $\mathbf{x}_n(t)$.
			Also, we denote by $[\mathbf{x}_n]_i$
			the $i$-th entry of $\mathbf{x}_n$.
			Then, agent 3's
			estimate $\mathbf{x}_3$
			is a $3 \times 1$ vector,
			with $[\mathbf{x}_3]_1$ being an estimate of
			$\theta_{2}^{\ast}$,
			$[\mathbf{x}_3]_2$ being an estimate of
			$\theta_{3}^{\ast}$,
			and
			$[\mathbf{x}_3]_3$ being an estimate of
			$\theta_{4}^{\ast}$.
			Regarding the remaining agents~$n \neq 3$, we have that
			$\mathbf{x}_n$
			is a scalar, with $\mathbf{x}_n$ being an estimate of $\theta_{n}^{\ast}$.
			Next, consider agent~$3$ and its interaction with agent 2.
			The censored quantity
			$\mathbf{x}_{32}^r$
			at agent 3 based on the received
			message from agent~2 equals
			$\mathbf{x}_{32}^r = [\,\mathbf{x}_2,\,0,\,0\,]^\top$.
			Further, the agent 3's own censored estimate, adapted
			so that it can be combined with $\mathbf{x}_{32}^r$, equals
			$\mathbf{x}_{32}^s = [\,[\mathbf{x}_3]_1,\,0,\,0\,]^\top$.
			Note that the first entry in both  $\mathbf{x}_{32}^r$ and
			$\mathbf{x}_{32}^s$ corresponds to an
			estimate of $\theta_{2}^{\ast}$, the second entry of
			both $\mathbf{x}_{32}^r$ and
			$\mathbf{x}_{32}^s$ corresponds to an
			estimate of $\theta_{3}^{\ast}$,
			and the third entry of
			both $\mathbf{x}_{32}^r$ and
			$\mathbf{x}_{32}^s$ corresponds to an
			estimate of $\theta_{4}^{\ast}$.
			The second and third entry in both $\mathbf{x}_{32}^r$ and
			$\mathbf{x}_{32}^s$ is zero,
			because the intersection of the agents' 2 and 3 interest sets
			$\mathcal{I}_1 \cap \mathcal{I}_2 = \{2\}$, i.e.,
			it does not include the interest for $\theta_{3}^{\ast}$ nor for $\theta_{4}^{\ast}$.
			Further, we have that $\mathbf{x}_{23}^r = [\mathbf{x}_3]_1$
			and $\mathbf{x}_{23}^s = \mathbf{x}_2$.
			The remaining pairs of quantities
			$\mathbf{x}_{nl}^r $ and
			and $\mathbf{x}_{nl}^s $ are defined analogously.
			Next,
			agent 3's estimate
			``lifted'' to the $N=5$-dimensional space
			equals $\widetilde{\mathbf{x}}_3 = [0,\,\,[\mathbf{x}_3]_1,\,[\mathbf{x}_3]_2,\,[\mathbf{x}_3]_3,\,0\,]^\top$.
			Note that the first and fifth entries in
			$\widetilde{\mathbf{x}}_3$ are zero, because
			agent 3 does not have interest in $\theta_{1}^{\ast}$ nor in $\theta_{5}^{\ast}$.
			Similarly, we have that
			$\widetilde{x}_2 = [\,0,\,\mathbf{x}_2,\,0,\,0,\,0\,]^\top$.
			We next specialize the update rule \eqref{eq:cirfe_1} for the
			example considered here and agent 3; we have:
			{\small\begin{align}
				\label{eq:cirfe_1_example}
				&\underbrace{\begin{bmatrix}
					[\mathbf{x}_{3}]_1(t+1) \\
					[\mathbf{x}_{3}]_2(t+1) \\
					[\mathbf{x}_{3}]_3(t+1)
					\end{bmatrix}}_{\mathbf{x}_3(t+1)}
				=\underbrace{\begin{bmatrix}
					[\mathbf{x}_{3}]_1(t) \\
					[\mathbf{x}_{3}]_2(t) \\
					[\mathbf{x}_{3}]_3(t)
					\end{bmatrix}}_{\mathbf{x}_3(t)}+\beta_{t}
				\underbrace{\left(
					\begin{bmatrix}
					\mathbf{x}_{2}(t) \\
					0 \\
					0
					\end{bmatrix}-
					\begin{bmatrix}
					[\mathbf{x}_{3}]_1(t) \\
					0 \\
					0
					\end{bmatrix}
					\right)}_{\mathbf{x}_{32}^r(t) - \mathbf{x}_{32}^s(t)  }\nonumber\\
				&
				+\beta_{t}\underbrace{\left(
					\begin{bmatrix}
					0 \\
					0 \\
					\mathbf{x}_{4}(t)
					\end{bmatrix}-
					\begin{bmatrix}
					0 \\
					0 \\
					[\mathbf{x}_{3}]_3(t)
					\end{bmatrix}
					\right)}_{\mathbf{x}_{34}^r(t) - \mathbf{x}_{34}^s(t)  }\nonumber\\&+
				\alpha_t\,
				\underbrace{
					\begin{bmatrix}
					1/3 \\
					1/3 \\
					1/3
					\end{bmatrix}}_{({\mathbf{H}_3^\top})_{\mathcal{I}_{3}}}\,R_3^{-1}\left( y_3(t)-\frac{1}{3}([\mathbf{x}_3]_1(t) +
				[\mathbf{x}_3]_2(t)
				+
				[\mathbf{x}_3]_3(t) )\right).
				\end{align}}
		%***********************
		We formalize an assumption on the connectivity of the inter-agent communication graph before proceeding further.
		\begin{myassump}{A4}
			\label{as:4}
			\emph{The inter-agent communication graph is connected on average, i.e., $\lambda_{2}(\mathbf{\overline{L}}) > 0$, where $\mathbf{\overline{L}}$ denotes the mean of the sequence of identically and independently distributed~(i.i.d) graph Laplacian sequence $\left\{\mathbf{L}(t)\right\}$}.
		\end{myassump}
		% It is to be noted that a pair of nodes $n, l$ can exchange information in its neighborhood at time $t$, when $\psi_{n,t}\neq 0$. At the same time, when $\psi_{n,t}= 0$, node $n$ neither transmits nor receives information. The link between node $n$ and node $l$ gets assigned a weight of $\rho_{t}^{2}$ if and only if $\psi_{n,t}\neq 0$ and $\psi_{l,t}\neq 0$.
		%The communication cost per node for the proposed algorithm is given by $\mathcal{C}_{t} = \sum_{s=0}^{t-1}\zeta_{s} = \Theta\left(t^{1+(\epsilon-\tau_{1})/2}\right)$,
		%which in turn is strictly sub-linear as $\epsilon < \tau_{1}$.
		\begin{Remark}
			\label{rm:1}
			In the parameter estimation scheme in \eqref{eq:cirfe_1}, an agent $n$ uses only those components of its neighbor $l$'s estimate $\mathbf{x}_{l}(t)$, which belong to its interest set $\mathcal{I}_{n}$. Thus, agents $n$ and $l$ combine components linearly which belong to $\mathcal{I}_{n}\cap\mathcal{I}_{l}$ and reject the rest of the components. From an implementation viewpoint, it is desirable for an agent $l$ to only transmit those components to agent $n$ which belong to $\mathcal{I}_{n}\cap\mathcal{I}_{l}$ instead of transmitting the entire $\mathbf{x}_{l}(t)$ to agent $n$ as the one which involves exchanging only those components which are common to the agents has lower communication overhead. In the former case, the receiving agent $n$ will zero out the components it does not require, so both the transmission strategies would lead to the same update. Moreover, in the innovation term, where an agent $n$ uses its own previous state to compute the innovation, an agent subsequently retains only the components of interest so as to keep the update economical in terms of size. We also emphasize here that the inter-agent communication graphs $\{L(t)\}$ and the physical adjacency matrix $\hat{A}$ induced by the measurement coupling may be structurally different.
		\end{Remark}
		\noindent We now present a more compact representation of the $\mathcal{CIRFE}$ algorithm so as to be able to establish its asymptotic convergence properties.
		\noindent Let $\mathcal{I}$ denote a subset of $\left\{1, 2, \cdots, N\right\}$. Define the diagonal matrix $P_{\mathcal{I}}$ which selects the corresponding non-zero components of $\mathcal{I}$ from a $\mathbb{R}^{N^2}$ dimensional vector. In particular, $P_{\mathcal{I}} = \textrm{diag}\left[P_{\mathcal{I}_{1}},\cdots,P_{\mathcal{I}_{n}}\right]$, where each $P_{\mathcal{I}_{n}}\in\mathbb{R}^{N\times N}$ and is a diagonal matrix such $[P_{\mathcal{I}_{n}}]_{i,i}=1$ if $i\in\mathcal{I}_{n}$ or $0$ otherwise. 
		
		%**************
			\noindent For the 5-agent network example associated with Figure~1, 
			we have for $n \neq 3$ that $P_{\mathcal{I}_{n}}$ 
			is the $5 \times 5$ matrix with all 
			the entries equal to zero, except the 
			$(n,n)$-th entry which equals one.  
			The matrix $P_{\mathcal{I}_{3}}$ 
			has all the entries equal to zero, 
			except the $(2,2)$-th, $(3,3)$-th, and $(4,4)$-th entries, 
			which al equal to one.
		
		%**************
		
		\noindent For the estimate sequence $\left\{\mathbf{x}_{n}(t)\right\}$ at agent $n$, let $\left\{\widetilde{\mathbf{x}}_{n}(t)\right\}\in\mathbb{R}^{N}$ denote the auxiliary estimate sequence, where $\widetilde{\mathbf{x}}_{n}(t)=\mathbf{x}_{n}(t)^{\mathcal{P}_{\mathcal{I}_n}}$. With the above development in place, it is easy to see that, for $\mathbf{y}\in\mathbb{R}^{N}$,  $\left(\mathbf{x}_{l,n}^{r}(t)\right)^{\mathcal{P}_{\mathcal{I}_n}}=P_{\mathcal{I}_{n}}P_{\mathcal{I}_{l}}\widetilde{\mathbf{x}}_{l}(t)$, $\left(\mathbf{x}_{l,n}^{s}(t)\right)^{\mathcal{P}_{\mathcal{I}_n}}=P_{\mathcal{I}_{n}}P_{\mathcal{I}_{l}}\widetilde{\mathbf{x}}_{n}(t)$ and $\left(\mathbf{x}_{n}(t)\right)^{\mathcal{P}_{\mathcal{I}_n}}=P_{\mathcal{I}_{n}}\widetilde{\mathbf{x}}_{n}(t)$. The $\mathcal{CIRFE}$ update in \eqref{eq:cirfe_1} can then be written in terms of the auxiliary processes as follows:
		{\small\begin{align}
			\label{eq:cirfe_2}
			&\widetilde{\mathbf{x}}_{n}(t+1)=\widetilde{\mathbf{x}}_{n}(t)-\sum_{l\in\Omega_{n}(t)}\beta_{t}P_{\mathcal{I}_{n}}P_{\mathcal{I}_{l}}\left(\widetilde{\mathbf{x}}_{n}(t)-\widetilde{\mathbf{x}}_{l}(t)\right)\nonumber\\&+\alpha_{t}P_{\mathcal{I}_{n}}\mathbf{H}_{n}^{\top}\mathbf{R}_{n}^{-1}\left(\mathbf{y}_{n}(t)-\mathbf{H}_{n}P_{\mathcal{I}_{n}}\widetilde{\mathbf{x}}_{n}(t)\right).
			\end{align}}
		\noindent We introduce the matrix $\mathbf{L}_{\mathcal{P}}\in\mathbb{R}^{N^{2}\times N^{2}}$ so as to make the above representation more compact.
		{\small\begin{align}
			\label{eq:LP}
			\left[\mathbf{L}_{\mathcal{P}}(t)\right]_{nl}=
			\begin{cases}
			-P_{\mathcal{I}_{n}}\sum_{r=1:r\neq n}^{N}\mathbf{L}_{nr}(t)P_{\mathcal{I}_{r}}& \mbox{if}~n=l\\
			\mathbf{L}_{nl}(t)P_{\mathcal{I}_{l}}P_{\mathcal{I}_{n}} & \mbox{otherwise},
			\end{cases}
			\end{align}}
		\noindent where $\left[\mathbf{L}_{\mathcal{P}}(t)\right]_{nl}\in\mathbb{R}^{N\times N}$ denotes the $(n,l)$-th sub-block of the block matrix $\mathbf{L}_{\mathcal{P}}$. It follows by elementary matrix multiplication properties that $\mathcal{P}\mathbf{L}_{\mathcal{P}}(t) = \mathbf{L}_{\mathcal{P}}(t)$. It is also to be noted that $\mathbf{L}_{\mathcal{P}}$ is a symmetric matrix.
		The matrix $\mathbf{L}_{\mathcal{P}}(t)$ at each time step $t$ can be decomposed as follows:
		{\small\begin{align}
			\label{eq:LP_1}
			\mathbf{L}_{\mathcal{P}}(t) = \overline{\mathbf{L}_{\mathcal{P}}}+\widetilde{\mathbf{L}_{\mathcal{P}}}(t),
			\end{align}}
		\noindent where $\{\mathbf{L}_{\mathcal{P}}(t)\}$ is an i.i.d. sequence with mean $\overline{\mathbf{L}_{\mathcal{P}}}$ and $\widetilde{\mathbf{L}_{\mathcal{P}}}(t) = \mathbf{L}_{\mathcal{P}}(t) - \mathbb{E}\left[\mathbf{L}_{\mathcal{P}}(t)\right]$. Thus, we have that the residual sequence $\{\widetilde{\mathbf{L}_{\mathcal{P}}}(t)\}$ satisfies $\mathbb{E}\left[\widetilde{\mathbf{L}_{\mathcal{P}}}(t)\right]=\mathbf{0}$.
		%where,
		%\begin{align}
		%\label{eq:laplace_res1}
		%\mathbb{E}\left[\left\|\widetilde{\mathbf{L}_{\mathcal{P}}}(t)\right\|^{2}\right] \leq  \frac{c_{r}\beta_{0}\rho_{0}^{2}}{(t+1)^{\tau_{1}+\epsilon}} .
		%\end{align}
		
		\noindent With the above development in place, the update in \eqref{eq:cirfe_2} can be written in a compact form as follows:
		{\small\begin{align}
			\label{eq:cirfe_3}
			\widetilde{\mathbf{x}}(t+1)=\widetilde{\mathbf{x}}(t)-\beta_{t}\mathbf{L}_{\mathcal{P}}(t)\widetilde{\mathbf{x}}(t)+\alpha_{t}\mathcal{P}\mathbf{G}_{H}\mathbf{R}^{-1}\left(\mathbf{y}(t)-\mathbf{G}_{H}^{\top}\mathcal{P}\widetilde{\mathbf{x}}(t)\right),
			\end{align}}
		where $\widetilde{\mathbf{x}}^{\top}(t)=\left[\widetilde{\mathbf{x}}^{\top}_{1}(t),\cdots,\widetilde{\mathbf{x}}^{\top}_{N}(t)\right]^{\top}$, $\mathbf{y}(t)^{\top}=[y_{1}(t)^{\top}\cdots y_{N}(t)^{\top}]^{\top}$, $\mathbf{R}=diag\left[\mathbf{R}_{1},\cdots,\mathbf{R}_{N}\right]$, $\mathcal{P}=diag\left[\mathcal{P}_{\mathcal{I}_1},\cdots,\mathcal{P}_{\mathcal{I}_N}\right]$, and $\mathbf{G}_{H}=diag[\mathbf{H}_{1}^{\top}, \mathbf{H}_{2}^{\top},\cdots, \mathbf{H}_{N}^{\top}]$.\\
		\begin{Remark}
				\label{rm:1.1}
				In the case when the noise covariance is not known apriori, a recursive estimator of the inverse noise covariance can be used so as to be used as a plugin estimate for $\mathbf{R}_{n}^{-1}$. A plugin estimate for $\mathbf{R}_{n}^{-1}$ at time $t+1$, denoted by $\widehat{\mathbf{R}}_{n}^{-1}(t+1)$ can be generated as follows:
				{\small\begin{align*}
					&\mathbf{Q}_{n}(t+1) = \frac{1}{t}\sum_{s=0}^{t}\mathbf{y}_{n}(s)\mathbf{y}_{n}^{\top}(s)-\left(\frac{1}{t}\sum_{s=0}^{t}\mathbf{y}_{n}(s)\right)\left(\frac{1}{t}\sum_{s=0}^{t}\mathbf{y}_{n}(s)\right)^{\top}\nonumber\\
					&\widehat{\mathbf{R}}_{n}^{-1}(t+1) = \left(\mathbf{Q}_{n}(t+1)+\gamma_{t}\mathbf{I}_{M_{n}}\right)^{-1},
					\end{align*}}
				where $\gamma_{t}$ is a time-decaying sequence such that $\gamma_{t}\to 0$ as $t\to\infty$.
				
				\noindent Also, given the sensing model and the assumption that the dimension of the observations at each agent $n$, given by $M_{n}$ is $M_{n}\ll N$, inverting a low-dimensional matrix is not particularly computationally taxing. In particular, $M_{n}$ can be equal to $1$ for instance in which the inverse noise covariance matrix can be estimated seamlessly. Furthermore, it is to be noted that the update can be adapted to be of the following form, where $\mathbf{R}^{-1}$ is replaced by $\mathbf{I}$
				\begin{align*}
				\widetilde{\mathbf{x}}(t+1)=\widetilde{\mathbf{x}}(t)-\beta_{t}\mathbf{L}_{\mathcal{P}}(t)\widetilde{\mathbf{x}}(t)+\alpha_{t}\mathcal{P}\mathbf{G}_{H}\left(\mathbf{y}(t)-\mathbf{G}_{H}^{\top}\mathcal{P}\widetilde{\mathbf{x}}(t)\right),
				\end{align*}
				which does not require the inverse noise covariance. We remark that with the above update, the algorithm still retains the property concerning the almost sure convergence of the parameter estimate at each agent to the entries of the parameter corresponding to its interest set. Thus, the computational cost can be reduced drastically with an update of the following form as defined above, which does not involve any matrix inversions. Thus, when knowledge or calculation of $\mathbf{R}^{-1}$ is an issue, algorithm \eqref{eq:cirfe_3} can be replaced with the update above, retaining consistency but possibly with a loss in terms of the asymptotic covariance.
			\end{Remark}
			\begin{Remark}
				\label{rm:2}
				The recursive update in \eqref{eq:cirfe_3} is of the stochastic approximation type. The stochastic approximation procedure, employed here is a mixed time-scale stochastic approximation as opposed to the classical single time-scale stochastic approximation~(see, for example \hspace{-2pt}\cite{nevelson1973stochastic}). The above notion of mixed time-scale is very different from the more commonly studied two time-scale stochastic approximation (see, for instance \cite{Borkar-stochapp}) in which a fast process is coupled with a slower dynamical system. The approach employed here is similar to the ones in \cite{Gelfand-Mitter} and \cite{kar2013distributed} in which a single update procedure is influenced by multiple potentials with different time-decaying weights. Now, suppose that the interest set of each agent consists of all components of $\btheta^{\ast}$, i.e., the update in \eqref{eq:cirfe_3} reduces to the classical consensus+innovations update in (2). A key technical step employed in the analysis of classical consensus+innovations procedures of the type in \eqref{eq:benchmark_ci} (see, for example, \cite{kar2013distributed}) consists of an approximation of the update in \eqref{eq:benchmark_ci} to a single time-scale stochastic approximation procedure that is \emph{asymptotically equivalent} to the former, in particular, that converges to the original iterate sequence at a rate faster than $(t+1)^{0.5}$. Typically, in the context of \eqref{eq:benchmark_ci} the approximating single time-scale procedure is the network-averaged estimate sequence, $\widetilde{\mathbf{x}}_{avg}(t) = \left(\frac{\mathbf{1}_{N}^{\top}}{N}\otimes\mathbf{I}_{N}\right)\widetilde{\mathbf{x}}(t)$, and the analysis in \cite{kar2013distributed} uses the fact that the Laplacian $\mathbf{L}(t)$ in \eqref{eq:benchmark_ci} has a left eigen vector of $\mathbf{1}_{N^{2}}$ and that every agent is interested in estimating the entire parameter vector. However, in the context of the update in \eqref{eq:cirfe_3}, every agent is interested in only a few entries of the parameter which makes the characterization of asymptotic properties of the estimate sequences highly non-trivial and substantially different from prior work on consensus+innovations type estimation procedures~\cite{kar2013distributed} in which agents share the common objective of estimating all components of the parameter. However, in contrast to prior work on consensus+innovations type estimation procedures~(see, for example \cite{kar2013distributed}) in which agents share the common objective of estimating all components of the parameter, the analysis with heterogeneous agent objectives in \eqref{eq:cirfe_3}, in that each agent is interested in a different subset of components, requires new technical machinery. In particular, to obtain asymptotic properties of \eqref{eq:cirfe_3}, we develop a more generalized approximation of the mixed time-scale procedure to an appropriate single time-scale procedure that takes into account of the heterogeneity in agent objectives; this approximation and subsequent analysis require new technical tools that we develop in this paper.
			\end{Remark}
			\noindent Define the subspace $\mathcal{S}_{P}\in\mathbb{R}^{N^2}$ by $\mathcal{S}_{P}=\left\{\mathbf{y}\in\mathbb{R}^{N^2}|\mathbf{y}=\mathcal{P}\mathbf{w}, \mbox{for some}~\mathbf{w}\in\mathbb{R}^{N^{2}}\right\}$. We now formalize a key assumption relating the interest sets $\mathcal{I}_n$ to the network connectivity and global observability.
			\begin{myassump}{A5}
				\label{as:5}
				\emph{There exists a constant $c_{1}>0$ such that,
					{\small\begin{align}
						\label{eq:as4}
						&\mathbf{y}^{\top}\left(\frac{\beta_0}{\alpha_0}\overline{\mathbf{L}_{\mathcal{P}}}+\mathcal{P}\mathbf{G}_{H}\mathbf{R}^{-1}\mathbf{G}_{H}^{\top}\mathcal{P}\right)\mathbf{y}\nonumber\\&\geq c_{1}\left\|\mathbf{y}\right\|^{2}, \forall~\mathbf{y}\in\mathcal{S}_{P}.
						\end{align}}
				}
			\end{myassump}
			\noindent We formalize an assumption on the innovation gain sequence $\{\alpha_{t}\}$ before proceeding further.
			\begin{myassump}{A6}
				\label{as:6}
				\emph{Let $\lambda_{min}\left(\cdot\right)$ denote the smallest eigenvalue. We require that $a$ satisfies,
					{\small\begin{align*}
						a\min\{\lambda_{min}\left(\sum_{n=1}^{N}\mathcal{P}_{\mathcal{I}_n}\mathbf{H}_{n}^{\top}\mathbf{R}_{n}^{-1}\mathbf{H}_{n}\mathcal{P}_{\mathcal{I}_n}\right),c_{1},\beta_{0}^{-1}\}\ge 1,
						\end{align*}}
					where $c_1$ is defined in \eqref{eq:as4}.}
			\end{myassump}
			\noindent  It is to be noted that in Assumption \ref{as:5}, if $\mathcal{P}=\mathbf{I}_{N^2}$, then the subspace $\mathcal{S}_{P}$ reduces to $\mathbb{R}^{N^{2}}$ and the condition in \eqref{eq:as4} reduces to a commonly employed Lyapunov condition in classical consensus+innovations type inference procedures (see, for example, Lemma 6 in \cite{kar2011convergence}) which, in turn, can be enforced by global observability and the mean connectivity of the network under consideration. However, in the case when $\mathcal{P}\neq\mathbf{I}_{N^2}$, the case considered in this paper, global observability and connectivity of the network is not sufficient to obtain the condition in \eqref{eq:as4}. The insufficiency of global observability and connectivity of the network in order to enforce \eqref{eq:as4} can be attributed to heterogeneous objectives of the agents and censoring of messages at agents leading to an inherent information loss. Intuitively, such a condition calls for existence of information pathways between agents who share a particular component in their interest sets and the particular component in question to be observable at this set of agents collectively. As we show in the following~(Lemma \ref{str_RLU}), a sufficient condition for Assumption \ref{as:5} is that in addition to the global observability and the mean network connectedness, the induced subgraph for every entry of the vector $\mathbf{\btheta}^{\ast}$ needs to be connected. The induced subgraph for the $r$-th entry is the set of agents and their associated links which have the $r$-th entry of $\mathbf{\btheta}^{\ast}$ in their interest sets.
			
			\noindent In the following, we will establish consistency of the $\mathcal{CIRFE}$ under Assumption \ref{as:5}. We now show by a simple example that, in general, Assumption \ref{as:5} is stronger than mean connectivity and global observability. To this end, consider again the simple network consisting of $5$ nodes in Fig.~\ref{5node-RLU} and \eqref{RLU-dis2}.
			%\begin{figure}%[ptb]
			%	\begin{center}
			%		\includegraphics[height=1in, width=1in ] {net_example.pdf} \caption{A network example emphasizing the notion of structural observability.}
			%		\label{5node-RLU}
			%	\end{center}
			%\end{figure}

			%********
			\noindent %Each node $n$ corresponds to a physical component $\theta_{n}^{\ast}$. Thus, $\btheta^{\ast}\in\mathbb{R}^{5}$. The solid lines connecting the nodes correspond to the inter-node communication pattern. Each node observes a noisy scalar functional. In particular, we assume
			%{\small\begin{equation}
			%\label{RLU-dis2}
			%y_{3}(i)=\frac{1}{3}\left(\theta_{2}^{\ast}+\theta_{3}^{\ast}+\theta_{4}^{\ast}\right)+\gamma_{3}(i),
			%\end{equation}}
			%{\small\begin{equation}
			%\label{RLU-dis3}
			%y_{n}(i)=\theta_{n}^{\ast}+\gamma_{n}(i), n=1,2,4,5.
			%\end{equation}}
			%%\begin{equation}
			%%\label{RLU-dis4}
			%%z_{5}(i)=\frac{1}{2}\left(\theta_{5}+\theta_{4}\right)+\zeta_{5}(i)
			%%\end{equation}
			%\noindent 
			Clearly, in this case, $G=\sum_{n=1}^{5}\mathbf{H}_{n}^{\top}\mathbf{H}_{n}$ is invertible and, as shown, the communication network is connected. %Also, $\widetilde{\mathcal{I}_{n}}= \{n\}$ for $n=1,2,4,5$ and $\widetilde{\mathcal{I}_{3}}= \{2,3,4\}$.
			\noindent In case, every node wants to estimate the entire $\btheta^{\ast}$, then the above inference task reduces to the inference setup considered in \cite{kar2011convergence,sahu2016distributedtsipn}.
			\noindent Consider the case where $\mathcal{I}_{n}=\widetilde{\mathcal{I}}_{n}$ for $n=1,2,3,4$, i.e., these nodes are interested in reconstructing only their own states and those who influence their observations. However, let $\mathcal{I}_{5}=\{5,1\}$, i.e., node $5$ is interested in the state of node $1$. This problem falls under the purview of $\mathcal{CIRFE}$. Clearly, Assumption \ref{as:4} is satisfied. However, it can be shown by calculating the various terms, that assumption \ref{as:5} is not satisfied and hence, convergence of $\mathcal{CIRFE}$ to desired values is not guaranteed. This shows that mean connectivity and global observability is not sufficient for assumption~\ref{as:5} in general. We provide an intuitive explanation, why the $\mathcal{CIRFE}$ is not expected to yield accurate estimates in this case and why the Lyapunov type requirement in assumption \ref{as:5} is sufficient for $\mathcal{CIRFE}$'s desired convergence.. Looking at Fig.~\ref{5node-RLU}, we note that the only node that observes (at least partially) the component $\theta_{1}^{\ast}$ is node $1$, i.e., the influence of the state $\theta_{1}^{\ast}$ only affects the observations at node $1$. Clearly, for node $5$ to be able to reconstruct $\theta_{1}^{\ast}$, it should be able to access information about $\theta_{1}^{\ast}$ from the allowed communication graph. Moreover, there is a path connecting node $1$ to node $5$. However, the other nodes in the path are not interested in reconstructing $\theta_{1}^{\ast}$, so they do not participate in the exchange of information regarding $\theta_{1}^{\ast}$. For example, node $2$ ignores the estimate of $\theta_{1}^{\ast}$ at node $1$ and similarly the others. As a result, the information about $\theta_{1}^{\ast}$ never reaches node $5$, although the communication network is connected. Note that the induced subgraph of component $1$ of $\btheta^{*}$ is disconnected, and it involves only nodes $1$ and $5$ and no links.
			
			\noindent At the same time, it is easy to see that this problem is resolved if an extra communication link is added between nodes $1$ and $5$. Thus, we see that connectivity of the subgraph formed by those nodes interested in reconstructing $\theta_{1}$ seems to facilitate proper information flow necessary for the desired convergence of $\mathcal{CIRFE}$. Based on this intuition, we formulate a general structural connectivity condition~(see \cite{kar2010large}) that guarantees the satisfaction of \ref{as:5} which, in turn, will be used subsequently to derive the convergence of $\mathcal{CIRFE}$. We direct the reader to Lemma 3.4.1 in \cite{kar2010large} for a proof.
			
			\begin{Lemma}[Lemma 3.4.1 in \cite{kar2010large}]
				\label{str_RLU}
				Let assumption \ref{as:4} be satisfied and the global observability condition hold. For each component $r$ of $\mathbf{\theta}^{\ast}$, define the subset $\mathcal{I}^{r}\subset [1,\cdots,N]$ by
				{\small\begin{equation}
					\label{str_RLU1}
					\mathcal{I}^{r}=\{n\in [1,\cdots,N]~|~r\in\mathcal{I}_{n}\}
					\end{equation}}
				\noindent Let $\overline{\mathcal{G}}$ denote the network graph corresponding to the mean Laplacian $\overline{\mathbf{L}}$, i.e., there is an edge between
				nodes $n$ and $l$ in $\overline{G}$ \emph{iff} the $(n,l)$-th entry in $\overline{\mathbf{L}}$ is non-zero. For each $1\leq r\leq N$, denote the induced subgraph $\overline{\mathcal{G}}_{r}$ of $\overline{\mathcal{G}}$ with node set $\mathcal{I}^{r}$. Then, condition~\ref{as:5} is satisfied if $\overline{\mathcal{G}}_{r}$ is connected for all $r$.
			\end{Lemma}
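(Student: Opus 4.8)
The plan is to show that the symmetric matrix
\[
\mathbf{M} = \frac{\beta_0}{\alpha_0}\overline{\mathbf{L}_{\mathcal{P}}} + \mathcal{P}\mathbf{G}_H\mathbf{R}^{-1}\mathbf{G}_H^\top\mathcal{P}
\]
is positive definite when restricted to $\mathcal{S}_P$. Since I will argue that both summands are positive semidefinite, it suffices to prove that $\mathbf{y}^\top\mathbf{M}\mathbf{y}=0$ together with $\mathbf{y}\in\mathcal{S}_P$ forces $\mathbf{y}=\mathbf{0}$; the uniform constant $c_1>0$ in \eqref{eq:as4} then follows by minimizing the continuous map $\mathbf{y}\mapsto\mathbf{y}^\top\mathbf{M}\mathbf{y}$ over the compact set $\{\mathbf{y}\in\mathcal{S}_P:\|\mathbf{y}\|=1\}$.

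First I would establish the two semidefiniteness facts through explicit ``edge/node'' expressions. Writing $\mathbf{y}=[\mathbf{y}_1^\top,\dots,\mathbf{y}_N^\top]^\top$ with $\mathbf{y}_n\in\mathbb{R}^N$, and using that the $P_{\mathcal{I}_n}$ are commuting diagonal projectors ($P_{\mathcal{I}_n}^2=P_{\mathcal{I}_n}$), the definition \eqref{eq:LP} of $\mathbf{L}_{\mathcal{P}}$ together with symmetry of the adjacency weights yields
\[
\mathbf{y}^\top\overline{\mathbf{L}_{\mathcal{P}}}\mathbf{y}=\frac{1}{2}\sum_{n\neq l}\overline{A}_{nl}\left\|P_{\mathcal{I}_n}P_{\mathcal{I}_l}(\mathbf{y}_n-\mathbf{y}_l)\right\|^2\geq 0,
\]
where $\overline{A}_{nl}$ denotes the mean adjacency weight, while the block-diagonal structure of $\mathcal{P}\mathbf{G}_H\mathbf{R}^{-1}\mathbf{G}_H^\top\mathcal{P}$ gives
\[
\mathbf{y}^\top\mathcal{P}\mathbf{G}_H\mathbf{R}^{-1}\mathbf{G}_H^\top\mathcal{P}\mathbf{y}=\sum_{n=1}^N\left\|\mathbf{R}_n^{-1/2}\mathbf{H}_nP_{\mathcal{I}_n}\mathbf{y}_n\right\|^2\geq 0.
\]
Both identities are routine once the projector algebra is set up.

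Next I would exploit $\mathbf{y}^\top\mathbf{M}\mathbf{y}=0$. As both summands vanish, the innovation identity forces $\mathbf{H}_nP_{\mathcal{I}_n}\mathbf{y}_n=\mathbf{0}$ for every $n$, and the consensus identity forces $P_{\mathcal{I}_n}P_{\mathcal{I}_l}(\mathbf{y}_n-\mathbf{y}_l)=\mathbf{0}$ on every edge $(n,l)$ of $\overline{\mathcal{G}}$. Because $\mathbf{y}\in\mathcal{S}_P$ means $P_{\mathcal{I}_n}\mathbf{y}_n=\mathbf{y}_n$ (each block is supported on $\mathcal{I}_n$), the consensus condition reads $[\mathbf{y}_n]_r=[\mathbf{y}_l]_r$ for every edge $(n,l)$ and every shared index $r\in\mathcal{I}_n\cap\mathcal{I}_l$. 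Fixing a component $r$ and restricting to the node set $\mathcal{I}^r$ defined in \eqref{str_RLU1}, connectivity of the induced subgraph $\overline{\mathcal{G}}_r$ propagates this equality along paths, so $[\mathbf{y}_n]_r$ takes one common value $z_r$ for all $n\in\mathcal{I}^r$. Assembling these values defines a single vector $\mathbf{z}\in\mathbb{R}^N$ with $\mathbf{y}_n=P_{\mathcal{I}_n}\mathbf{z}$ for every $n$.

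To finish I would combine this with the innovation condition and global observability. By Assumption \ref{as:3}, $\widetilde{\mathcal{I}}_n\subseteq\mathcal{I}_n$, so $\mathbf{H}_n$ has nonzero columns only on $\mathcal{I}_n$ and hence $\mathbf{H}_nP_{\mathcal{I}_n}=\mathbf{H}_n$; the innovation condition therefore becomes $\mathbf{H}_n\mathbf{z}=\mathbf{0}$ for all $n$. Consequently $\mathbf{z}^\top\mathbf{G}\mathbf{z}=\sum_n\mathbf{z}^\top\mathbf{H}_n^\top\mathbf{R}_n^{-1}\mathbf{H}_n\mathbf{z}=0$, and full rank of $\mathbf{G}$ (Assumption \ref{as:2}) gives $\mathbf{z}=\mathbf{0}$, whence $\mathbf{y}_n=P_{\mathcal{I}_n}\mathbf{z}=\mathbf{0}$ and $\mathbf{y}=\mathbf{0}$, as required. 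The main obstacle is the bookkeeping in the consensus step: one must verify that the per-edge equalities constrain only the shared indices $\mathcal{I}_n\cap\mathcal{I}_l$, that connectivity of each $\overline{\mathcal{G}}_r$ is exactly what glues these local equalities into a globally consistent $\mathbf{z}$, and that Assumption \ref{as:3} is precisely what lets the innovation constraint be read on the ambient vector $\mathbf{z}$ rather than on its censored blocks.
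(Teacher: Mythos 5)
Your proof is correct. A caveat on the comparison: the paper never proves this lemma itself---it defers entirely to Lemma 3.4.1 of \cite{kar2010large}---so there is no in-paper argument to measure you against, and your proposal must stand on its own, which it does. The skeleton (both summands of $\frac{\beta_0}{\alpha_0}\overline{\mathbf{L}_{\mathcal{P}}}+\mathcal{P}\mathbf{G}_{H}\mathbf{R}^{-1}\mathbf{G}_{H}^{\top}\mathcal{P}$ are positive semidefinite, so it suffices to show the quadratic form vanishes only at $\mathbf{0}$ on $\mathcal{S}_{P}$, after which $c_{1}$ follows from compactness of the unit sphere of $\mathcal{S}_{P}$ and homogeneity) is the natural route for a Lyapunov condition of this kind. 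The two identities you call routine do check out: expanding \eqref{eq:LP} with the sign convention $\mathbf{L}_{nl}=-A_{nl}$ for $n\neq l$, and using that the $P_{\mathcal{I}_n}$ are commuting idempotent diagonal matrices, gives exactly $\mathbf{y}^\top\overline{\mathbf{L}_{\mathcal{P}}}\mathbf{y}=\frac{1}{2}\sum_{n\neq l}\overline{A}_{nl}\left\|P_{\mathcal{I}_n}P_{\mathcal{I}_l}(\mathbf{y}_n-\mathbf{y}_l)\right\|^2$ with $\overline{A}_{nl}$ the mean adjacency weight, and block-diagonality of $\mathcal{P}\mathbf{G}_{H}\mathbf{R}^{-1}\mathbf{G}_{H}^{\top}\mathcal{P}$ gives the per-node terms $\left\|\mathbf{R}_n^{-1/2}\mathbf{H}_n P_{\mathcal{I}_n}\mathbf{y}_n\right\|^2$. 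The gluing step is also sound: the per-edge constraints act exactly on the shared indices $\mathcal{I}_n\cap\mathcal{I}_l$, connectivity of each induced subgraph $\overline{\mathcal{G}}_{r}$ propagates them to a single value $z_r$ over $\mathcal{I}^{r}$, entries outside an agent's interest set vanish because $\mathbf{y}\in\mathcal{S}_{P}$, and Assumption \ref{as:3} yields $\mathbf{H}_{n}P_{\mathcal{I}_n}=\mathbf{H}_{n}$, which is precisely what converts the censored innovation constraint into $\mathbf{H}_{n}\mathbf{z}=\mathbf{0}$ so that full rank of $\mathbf{G}$ (Assumption \ref{as:2}, the global observability hypothesis of the lemma) forces $\mathbf{z}=\mathbf{0}$. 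You correctly identify where each hypothesis enters; note in passing that your argument never invokes Assumption \ref{as:4} directly---connectivity of the induced subgraphs is all that is used---which is consistent with the lemma merely listing it among the standing hypotheses.
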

			\noindent Technically speaking, the average connectedness of the induced subgraphs in conjunction with the global observability of the entry of the parameter relevant to the subgraphs is enough to ensure consistency of the estimate sequence of the entry of the parameter. The combinatorial perspective brought about by the preceding observation being, can one relax the connectivity of the induced subgraph. For example, consider the $r$-th entry of the parameter. Let the number of agents interested to estimate the entry is $N_{r}$ out of which $M$ agents (referred to as $\mathcal{O}$-agents) have the entry incorporated into their observations. In the case, when one can split $N_r$ agents into disconnected components where each component consists of non-zero number of agents which observe the entry and the entry is rendered globally observable with respect to those $\mathcal{O}$-agents in that component, would ensure the estimates of that entry being consistent at each agent which is interested to reconstruct that agent. However, as the subgraphs induced by interest sets are coupled in lieu of the interest sets, it might not be possible to ensure such a construction as the one described before for each entry of the parameter.

			\section{$\mathcal{CIRFE}$: MAIN RESULTS}
			\label{sec:main_res}
			\noindent In this section we formally state the main results concerning the distributed parameter estimation $\mathcal{CIRFE}$ algorithm. The proofs are relegated to Section \ref{sec:proof_mainres}. The first result concerns with the consistency of the parameter estimate sequence at each agent $n$.
			\begin{Theorem}
				\label{th1}
				Consider the parameter estimate sequence $\{\widetilde{\mathbf{x}}(t)\}$ generated by the $\mathcal{CIRFE}$ algorithm according to \eqref{eq:cirfe_3}. Let Assumptions \ref{as:1}-\ref{as:6} hold. Then, we have,
				{\small\begin{align}
					\label{eq:th1_1}
					\mathbb{P}_{\btheta^{\ast}}\left(\lim_{t\to\infty}\widetilde{\mathbf{x}}(t)=\mathcal{P}\left(\mathbf{1}_{N}\otimes\btheta^{\ast}\right)\right)=1.
					\end{align}}
			\end{Theorem}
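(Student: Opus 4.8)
The plan is to analyze the error process $\btheta(t) = \widetilde{\mathbf{x}}(t) - \mathcal{P}(\mathbf{1}_N \otimes \btheta^{\ast})$ and show it converges to zero almost surely by recasting the $\mathcal{CIRFE}$ recursion \eqref{eq:cirfe_3} as a stochastic approximation procedure and invoking a Lyapunov-type argument. First I would verify that the target point $\mathbf{z}^{\ast} = \mathcal{P}(\mathbf{1}_N \otimes \btheta^{\ast})$ is an equilibrium: since $\mathbf{y}(t) = \mathbf{G}_H^{\top}(\mathbf{1}_N \otimes \btheta^{\ast}) + \mathbf{\gamma}(t)$ and $\mathbf{G}_H^{\top}\mathcal{P}\,\mathbf{z}^{\ast} = \mathbf{G}_H^{\top}(\mathbf{1}_N \otimes \btheta^{\ast})$ (using $\mathcal{P}^2 = \mathcal{P}$ together with Assumption~\ref{as:3}, which guarantees that every physically coupled component lies in the interest set, so censoring does not discard any component $\mathbf{H}_n$ actually senses), the innovation term has zero mean at $\mathbf{z}^{\ast}$; and since $\mathbf{L}_{\mathcal{P}}(t)\mathbf{z}^{\ast} = \mathbf{0}$ because each block of $\mathbf{L}_{\mathcal{P}}(t)$ difference-couples identical entries across agents sharing an interest, the consensus term also vanishes at $\mathbf{z}^{\ast}$. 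Writing the recursion for $\btheta(t)$ then yields a drift $-\beta_t \mathbf{L}_{\mathcal{P}}(t)\btheta(t) - \alpha_t \mathcal{P}\mathbf{G}_H\mathbf{R}^{-1}\mathbf{G}_H^{\top}\mathcal{P}\,\btheta(t)$ plus a martingale-difference noise $\alpha_t \mathcal{P}\mathbf{G}_H\mathbf{R}^{-1}\mathbf{\gamma}(t)$.

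Next I would confine the dynamics to the relevant subspace. Because $\mathcal{P}\mathbf{L}_{\mathcal{P}}(t) = \mathbf{L}_{\mathcal{P}}(t)$ and the innovation operator carries a leading $\mathcal{P}$, the error process started in $\mathcal{S}_P$ stays in $\mathcal{S}_P$, so it suffices to establish contraction of the mean drift operator on $\mathcal{S}_P$. This is exactly where Assumption~\ref{as:5} enters: averaging the random Laplacian over the i.i.d.\ sequence and decomposing $\mathbf{L}_{\mathcal{P}}(t) = \overline{\mathbf{L}_{\mathcal{P}}} + \widetilde{\mathbf{L}_{\mathcal{P}}}(t)$ via \eqref{eq:LP_1}, the effective deterministic drift matrix restricted to $\mathcal{S}_P$ is governed by $\tfrac{\beta_0}{\alpha_0}\overline{\mathbf{L}_{\mathcal{P}}} + \mathcal{P}\mathbf{G}_H\mathbf{R}^{-1}\mathbf{G}_H^{\top}\mathcal{P}$, which \eqref{eq:as4} bounds below by $c_1 \|\mathbf{y}\|^2$. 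Assumption~\ref{as:6} then calibrates the innovation gain constant $a$ so that the composite decay rate dominates, producing a strict negative drift of the Lyapunov function $V(\btheta) = \|\btheta\|^2$ along the mean field.

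I would then package these ingredients into a stochastic approximation convergence theorem of the mixed time-scale type (the framework of \cite{nevelson1973stochastic, kar2013distributed}). The key verifications are: the gain sequences satisfy $\sum_t \alpha_t = \infty$, $\sum_t \alpha_t^2 < \infty$, and $\beta_t / \alpha_t \to \infty$ with $\beta_t \to 0$, all of which follow from \eqref{eq:ab} and the constraint $0 < \delta_1 < 1/2 - 1/(2+\epsilon_1)$; the noise moment bound $\mathbb{E}_{\btheta}[\|\mathbf{\gamma}_n(t)\|^{2+\epsilon_1}] < \infty$ from Assumption~\ref{as:1} controls the martingale-difference term and guarantees its square-summable contribution; and the mean connectivity of Assumption~\ref{as:4} underlies the existence and positivity of $\overline{\mathbf{L}_{\mathcal{P}}}$. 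Combining the strict Lyapunov decay on $\mathcal{S}_P$ with the Robbins–Siegmund almost-supermartingale argument delivers $\btheta(t) \to \mathbf{0}$ a.s., which is precisely \eqref{eq:th1_1}.

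The main obstacle I anticipate is handling the \emph{stochastic} part of the Laplacian coupling, $\widetilde{\mathbf{L}_{\mathcal{P}}}(t)$, simultaneously with the two competing time scales. Unlike the innovation noise, this term multiplies the state $\btheta(t)$ itself, so it cannot be treated as an exogenous additive perturbation; one must show its contribution is asymptotically negligible relative to the dominant innovation-driven contraction. Because the censoring matrices $P_{\mathcal{I}_n}P_{\mathcal{I}_l}$ break the clean left-eigenvector structure (the vector $\mathbf{1}_{N^2}$ no longer annihilates $\mathbf{L}_{\mathcal{P}}$ in the way it does for the homogeneous case), the standard reduction to a network-averaged scalar process used in \cite{kar2013distributed} is unavailable, and I expect the genuinely new work to lie in constructing the generalized single-time-scale surrogate adapted to the interest-set structure, as flagged in Remark~\ref{rm:2}.
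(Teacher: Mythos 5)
Your overall skeleton coincides with the paper's own proof: form the error process $\widehat{\mathbf{x}}(t)=\widetilde{\mathbf{x}}(t)-\mathcal{P}\left(\mathbf{1}_{N}\otimes\btheta^{\ast}\right)$, verify the target is an equilibrium of the noiseless dynamics (your observations that $\mathbf{L}_{\mathcal{P}}(t)\mathcal{P}\left(\mathbf{1}_{N}\otimes\btheta^{\ast}\right)=\mathbf{0}$ and that Assumption \ref{as:3} makes the innovation unbiased at the target are exactly what underlies the paper's error recursion \eqref{eq:tr1pr_1}), use invariance of $\mathcal{S}_{P}$, apply the Lyapunov function $V(\mathbf{y})=\left\|\mathbf{y}\right\|^{2}$ with the ellipticity condition of Assumption \ref{as:5} scaled by Assumption \ref{as:6}, and close with a supermartingale argument; the paper's construction $W(t,\mathbf{y})=(1+V(\mathbf{y}))\prod_{j\geq t}(1+c_{11}\alpha_{j}^{2})$ together with the exit-time argument is precisely a hand-rolled Robbins--Siegmund.

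There is, however, a genuine gap in your plan for what you correctly identify as the main obstacle, the state-multiplicative term $\beta_{t}\widetilde{\mathbf{L}_{\mathcal{P}}}(t)\widehat{\mathbf{x}}(t)$. Its per-step contribution to the conditional second moment is of order $\beta_{t}^{2}\left\|\widehat{\mathbf{x}}(t)\right\|^{2}$ with $\beta_{t}^{2}=\Theta\left((t+1)^{-2\delta_{1}}\right)$, and since $\delta_{1}<1/2-1/(2+\epsilon_{1})<1/2$, this coefficient is \emph{not} summable, so Robbins--Siegmund cannot absorb it as a multiplicative perturbation; nor can it be dominated by the innovation contraction $-c_{1}\alpha_{t}\left\|\widehat{\mathbf{x}}(t)\right\|^{2}$, since $\beta_{t}^{2}/\alpha_{t}\to\infty$. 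Hence showing this term ``asymptotically negligible relative to the dominant innovation-driven contraction,'' as you propose, is exactly the comparison that fails. The paper's resolution (see \eqref{eq:tr1pr_5}) is to cancel the second-order Laplacian terms against the \emph{first-order consensus drift} $-\beta_{t}\overline{\mathbf{L}}_{\mathcal{P}}$: each realization $\mathbf{L}_{\mathcal{P}}(t)$ is positive semidefinite and, the Laplacian set being finite, $\ker\overline{\mathbf{L}}_{\mathcal{P}}\subseteq\ker\mathbf{L}_{\mathcal{P}}(t)$ almost surely, so $\widetilde{\mathbf{L}_{\mathcal{P}}}(t)\widehat{\mathbf{x}}(t)$ vanishes wherever $\overline{\mathbf{L}}_{\mathcal{P}}\widehat{\mathbf{x}}(t)$ does, while on the complementary subspace $\beta_{t}^{2}C-\beta_{t}\lambda\leq 0$ for all large $t$ because $\beta_{t}\to 0$. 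Only after this cancellation does the clean inequality \eqref{eq:thr1:6}, namely $\mathbb{E}_{\btheta^{\ast}}\left[V(\widehat{\mathbf{x}}(t+1))\,|\,\widehat{\mathbf{x}}(t)=\mathbf{y}\right]-V(\mathbf{y})\leq c_{11}\alpha_{t}^{2}\left(1+\left\|\mathbf{y}\right\|^{2}\right)-c_{10}\alpha_{t}\left\|\mathbf{y}\right\|^{2}$, emerge, and from there your supermartingale finish is fine. A secondary misdirection: the ``generalized single-time-scale surrogate'' of Remark \ref{rm:2} is not needed for this theorem at all --- that machinery (the subgraph-averaged process and Lemmas \ref{main_res_lap} and \ref{le:l1.1}) enters only in the proofs of the rate and normality results (Theorem \ref{th2}); consistency closes with the Lyapunov argument alone.
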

			\noindent  At this point, we note that the estimate sequence generated by $\mathcal{CIRFE}$ at any agent $n$ is strongly consistent, i.e., $\mathbf{x}_{n}(t)\rightarrow\btheta^{\ast}_{\mathcal{I}_{n}}$ almost surely~(a.s.) as $t\rightarrow\infty$. It is also to be noted that, owing to the heterogeneous objectives of the agents, the consensus in terms of the estimates sequences across any pair of agents is only limited to the common components of the parameter in their interest sets.\\
			\begin{Theorem}
					\label{th1.1}
					Let the hypothesis of theorem \ref{th1} hold. Then, we have,
					\begin{align}
					\label{eq:th1.1}
					\mathbb{E}\left[\left\|\widetilde{\mathbf{x}}(t)-\mathcal{P}\left(\mathbf{1}_{N}\otimes\boldsymbol{\theta}^{\ast}\right)\right\|^{2}\right] = O\left(\frac{1}{t}\right)
					\end{align}
				\end{Theorem}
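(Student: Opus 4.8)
The plan is to study the centered error process $\mathbf{z}(t)=\widetilde{\mathbf{x}}(t)-\mathcal{P}\left(\mathbf{1}_{N}\otimes\btheta^{\ast}\right)$ directly from \eqref{eq:cirfe_3}. First I would subtract the fixed point. Two elementary facts drive the reduction: by Assumption \ref{as:3} the nonzero columns of $\mathbf{H}_{n}$ lie in $\mathcal{I}_{n}$, so $\mathbf{H}_{n}\mathcal{P}_{\mathcal{I}_{n}}=\mathbf{H}_{n}$ and the innovation residual collapses to $\gamma_{n}(t)-\mathbf{H}_{n}\mathcal{P}_{\mathcal{I}_{n}}[\mathbf{z}(t)]_{n}$; and, for every realization, $\mathbf{L}_{\mathcal{P}}(t)\,\mathcal{P}\left(\mathbf{1}_{N}\otimes\btheta^{\ast}\right)=\mathbf{0}$, since each block-row of $\mathbf{L}_{\mathcal{P}}(t)$ differences a common value over intersected supports. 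These give the linear stochastic recursion $\mathbf{z}(t+1)=\left(\mathbf{I}-\beta_{t}\mathbf{L}_{\mathcal{P}}(t)-\alpha_{t}\boldsymbol{\Sigma}\right)\mathbf{z}(t)+\alpha_{t}\mathbf{n}(t)$, with $\boldsymbol{\Sigma}=\mathcal{P}\mathbf{G}_{H}\mathbf{R}^{-1}\mathbf{G}_{H}^{\top}\mathcal{P}\succeq\mathbf{0}$ and $\mathbf{n}(t)=\mathcal{P}\mathbf{G}_{H}\mathbf{R}^{-1}\gamma(t)$ zero-mean with $\mathbb{E}\|\mathbf{n}(t)\|^{2}<\infty$ by Assumption \ref{as:1}. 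Since $\mathcal{P}\mathbf{L}_{\mathcal{P}}(t)=\mathbf{L}_{\mathcal{P}}(t)$, the iterates remain in $\mathcal{S}_{P}$, hence $\mathbf{z}(t)\in\mathcal{S}_{P}$ for all $t$.

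Conditioning on the natural filtration $\mathcal{F}_{t}$ and using independence of $\mathbf{L}_{\mathcal{P}}(t),\gamma(t)$ from the past yields $\mathbb{E}\left[\|\mathbf{z}(t+1)\|^{2}\mid\mathcal{F}_{t}\right]=\mathbf{z}(t)^{\top}\left(\overline{B}_{t}^{\top}\overline{B}_{t}+\beta_{t}^{2}S_{L}\right)\mathbf{z}(t)+\alpha_{t}^{2}\mathbb{E}\|\mathbf{n}(t)\|^{2}$, where $\overline{B}_{t}=\mathbf{I}-\beta_{t}\overline{\mathbf{L}_{\mathcal{P}}}-\alpha_{t}\boldsymbol{\Sigma}$ and $S_{L}=\mathbb{E}[\widetilde{\mathbf{L}_{\mathcal{P}}}(t)^{\top}\widetilde{\mathbf{L}_{\mathcal{P}}}(t)]$. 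The crucial difficulty is that the naive scalar bound $\mathbf{z}^{\top}\left(\overline{B}_{t}^{\top}\overline{B}_{t}+\beta_{t}^{2}S_{L}\right)\mathbf{z}\le\left(1-c/(t+1)+C\beta_{t}^{2}\right)\|\mathbf{z}\|^{2}$ is worthless: because $2\delta_{1}<1$, the positive term $C\beta_{t}^{2}$ eventually exceeds $c/(t+1)$, so this recursion fails to give even boundedness. This is precisely what forces the genuine mixed time-scale argument and is the heart of the proof.

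To resolve this I would decompose $\mathcal{S}_{P}=\mathcal{C}\oplus\mathcal{C}^{\perp}$, where $\mathcal{C}=\left\{\mathcal{P}\left(\mathbf{1}_{N}\otimes\mathbf{v}\right):\mathbf{v}\in\mathbb{R}^{N}\right\}$ is the common kernel annihilated by \emph{every} $\mathbf{L}_{\mathcal{P}}(t)$ (hence by $\overline{\mathbf{L}_{\mathcal{P}}}$ and $\widetilde{\mathbf{L}_{\mathcal{P}}}(t)$), and $\mathcal{C}^{\perp}$ is its orthogonal complement within $\mathcal{S}_{P}$. Three structural facts are needed: (i) on $\mathcal{C}$, Assumption \ref{as:5} reduces to $\mathbf{z}^{\top}\boldsymbol{\Sigma}\mathbf{z}\ge c_{1}\|\mathbf{z}\|^{2}$, an innovation contraction at the slow $\alpha_{t}$ scale; (ii) on $\mathcal{C}^{\perp}$, $\overline{\mathbf{L}_{\mathcal{P}}}\succeq\lambda_{\perp}\mathbf{I}$ for some $\lambda_{\perp}>0$, a consensus contraction at the fast $\beta_{t}$ scale, which is where the connectivity input behind Assumption \ref{as:5} and Lemma \ref{str_RLU} enters, guaranteeing that the kernel of $\overline{\mathbf{L}_{\mathcal{P}}}$ inside $\mathcal{S}_{P}$ is exactly $\mathcal{C}$; and (iii) $\widetilde{\mathbf{L}_{\mathcal{P}}}(t)$ annihilates $\mathcal{C}$, so the dangerous fluctuation term $\beta_{t}^{2}\mathbf{z}^{\top}S_{L}\mathbf{z}$ acts only on the disagreement component $\mathbf{z}_{\perp}=\Pi_{\mathcal{C}^{\perp}}\mathbf{z}$. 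Writing $U(t)=\mathbb{E}\|\Pi_{\mathcal{C}}\mathbf{z}(t)\|^{2}$ and $W(t)=\mathbb{E}\|\mathbf{z}_{\perp}(t)\|^{2}$, projecting the recursion and using Young's inequality on the $\boldsymbol{\Sigma}$-induced cross terms produces a coupled pair $U(t+1)\le\left(1-(2-\epsilon)c_{1}\alpha_{t}+C\alpha_{t}^{2}\right)U(t)+C'\alpha_{t}W(t)+O(\alpha_{t}^{2})$ and $W(t+1)\le\left(1-\lambda_{\perp}\beta_{t}+C\beta_{t}^{2}\right)W(t)+C''(\alpha_{t}^{2}/\beta_{t})U(t)+O(\alpha_{t}^{2})$, in which the troublesome fluctuation lands in the $W$-equation, dominated there by $\lambda_{\perp}\beta_{t}$.

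Finally I would close the coupling with the single Lyapunov function $\Phi(t)=U(t)+\rho W(t)$ for a small fixed $\rho>0$. Because $\beta_{t}/\alpha_{t}=(\beta_{0}/a)(t+1)^{1-\delta_{1}}\to\infty$, for all $t$ beyond some $T_{0}$ the fast $\beta_{t}$-contraction of $W$ absorbs both the $C'\alpha_{t}W$ feedback from the $U$-equation and the residual $\beta_{t}^{2}$ terms, while the $(\alpha_{t}^{2}/\beta_{t})U$ and $\alpha_{t}W$ forcings are of lower order than $\alpha_{t}$; this collapses $\Phi$ to a single-time-scale recursion $\Phi(t+1)\le\left(1-c^{\star}\alpha_{t}\right)\Phi(t)+O(\alpha_{t}^{2})$. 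Assumption \ref{as:6} gives $ac_{1}\ge1$, and retaining a $(2-\epsilon)$ fraction of the innovation contraction when applying Young's inequality makes the single-scale constant $c^{\star}a\ge(2-\epsilon)ac_{1}\ge2-\epsilon>1$; the standard stochastic-approximation comparison lemma then delivers $\Phi(t)=O(1/t)$, whence $\mathbb{E}\|\mathbf{z}(t)\|^{2}=U(t)+W(t)=O(1/t)$, which is \eqref{eq:th1.1}. The main obstacle, as flagged above, is the structural step (ii)--(iii): ensuring $\overline{\mathbf{L}_{\mathcal{P}}}$ has no kernel in $\mathcal{S}_{P}$ beyond $\mathcal{C}$ and that the Laplacian fluctuation vanishes on $\mathcal{C}$, because otherwise the slow $\beta_{t}^{2}$ noise would be injected along a direction damped only by innovations and would destroy the $O(1/t)$ rate; here the a.s. consistency of Theorem \ref{th1} is not by itself enough, and the connectivity condition underlying Assumption \ref{as:5} must be used in earnest.
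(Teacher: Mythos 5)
Your proposal is sound in its core mechanism and, at bottom, runs on the same engine as the paper's own proof, which is a short corollary of estimates already derived for Theorem \ref{th1}: from \eqref{eq:tr1pr_3}--\eqref{eq:thr1:6} the paper extracts the single scalar recursion $\mathbb{E}\left[\left\|\widehat{\mathbf{x}}(t+1)\right\|^{2}\,\middle|\,\widehat{\mathbf{x}}(t)\right]\leq\left(1-c_{1}\alpha_{t}\right)\left\|\widehat{\mathbf{x}}(t)\right\|^{2}+c_{2}\alpha_{t}^{2}$ on $\mathcal{S}_{\mathcal{P}}$, takes expectations, and invokes Lemma \ref{int_res_0}. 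You correctly identify the crux (the $\beta_{t}^{2}$ fluctuation term dominates $\alpha_{t}$ since $2\delta_{1}<1$, so any naive norm bound fails), and you resolve it by an explicit orthogonal decomposition with coupled recursions and a weighted Lyapunov function, where the paper compresses the same idea into the matrix inequality \eqref{eq:tr1pr_5}. Your explicitness actually buys rigor at two points where the paper is loose: first, you keep the fluctuation as the quadratic form $\mathbf{z}^{\top}\mathbb{E}\left[\widetilde{\mathbf{L}_{\mathcal{P}}}(t)^{2}\right]\mathbf{z}$ and argue it vanishes on the undamped directions, whereas \eqref{eq:tr1pr_5} bounds it by the operator-norm scalar $\beta_{t}^{2}\mathbb{E}\left\|\widetilde{\mathbf{L}_{\mathcal{P}}}(t)\right\|^{2}$, which is strictly positive on those very directions and so cannot literally satisfy the claimed ``$\leq 0$'' there; second, your insistence on an effective gain $ac^{\star}\geq 2-\epsilon>1$ is exactly what is needed to extract $O(1/t)$ (rather than $o(t^{-\delta_{0}})$ for every $\delta_{0}<1$, or $O(\log t/t)$) from the comparison lemma, since Assumption \ref{as:6} only guarantees $ac_{1}\geq 1$ and Lemma \ref{int_res_0} applied with $\delta_{1}=1$, $\delta_{2}=2$ does not by itself yield the stated rate.

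There is, however, one genuine flaw in your structural steps (ii)--(iii): you decompose with respect to $\mathcal{C}=\left\{\mathcal{P}\left(\mathbf{1}_{N}\otimes\mathbf{v}\right):\mathbf{v}\in\mathbb{R}^{N}\right\}$ and assert that Assumption \ref{as:5} (via ``the connectivity input behind'' it) guarantees $\ker\left(\overline{\mathbf{L}_{\mathcal{P}}}\right)\cap\mathcal{S}_{\mathcal{P}}=\mathcal{C}$. It does not: Assumption \ref{as:5} only forces $\ker\left(\overline{\mathbf{L}_{\mathcal{P}}}\right)\cap\ker\left(\mathcal{P}\mathbf{G}_{H}\mathbf{R}^{-1}\mathbf{G}_{H}^{\top}\mathcal{P}\right)\cap\mathcal{S}_{\mathcal{P}}=\{0\}$, and the kernel of $\overline{\mathbf{L}_{\mathcal{P}}}$ inside $\mathcal{S}_{\mathcal{P}}$ can be strictly larger than $\mathcal{C}$ (the paper's discussion after Lemma \ref{str_RLU}, on disconnected induced subgraphs whose components each retain observability, describes exactly such a situation). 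What you are really invoking is the induced-subgraph connectivity of Lemma \ref{str_RLU}, which is sufficient for, but not implied by, Assumption \ref{as:5}; so as written you prove the theorem under a stronger hypothesis than the one stated. The repair is simple and preserves your entire architecture: decompose with respect to $K=\ker\left(\overline{\mathbf{L}_{\mathcal{P}}}\right)\cap\mathcal{S}_{\mathcal{P}}$ instead of $\mathcal{C}$. Then (i) on $K$, Assumption \ref{as:5} directly gives $\mathbf{z}^{\top}\mathcal{P}\mathbf{G}_{H}\mathbf{R}^{-1}\mathbf{G}_{H}^{\top}\mathcal{P}\mathbf{z}\geq c_{1}\left\|\mathbf{z}\right\|^{2}$ because $\overline{\mathbf{L}_{\mathcal{P}}}$ vanishes there; (ii) on $K^{\perp}\cap\mathcal{S}_{\mathcal{P}}$ the spectral gap $\lambda_{\perp}>0$ holds by the definition of $K$; and (iii) since each realization $\mathbf{L}_{\mathcal{P}}(t)$ is positive semidefinite and takes finitely many values, $\mathbf{z}^{\top}\overline{\mathbf{L}_{\mathcal{P}}}\mathbf{z}=0$ forces $\mathbf{L}_{\mathcal{P}}(t)\mathbf{z}=0$ for every realization of positive probability, hence $\widetilde{\mathbf{L}_{\mathcal{P}}}(t)$ annihilates all of $K$, not merely $\mathcal{C}$. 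With $K$ in place of $\mathcal{C}$, your coupled recursions and the Lyapunov combination go through verbatim under Assumption \ref{as:5} alone, matching the theorem's hypotheses.
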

				\noindent Thus, we note that the mean square error of the estimate sequence with respect to the components of the parameter $\boldsymbol{\theta}^{\ast}$ decays as $1/t$.
			\noindent With the above development in place, we state a result which allows us to benchmark the asymptotic efficiency of the proposed algorithm.
			\begin{Theorem}
				\label{th2}
				Let the hypotheses of Theorem \ref{th1} hold. Then, the time-scaled sequence $\sqrt{t+1}\left(\widetilde{\mathbf{x}}(t)-\mathcal{P}\left(\mathbf{1}_{N}\otimes\btheta^{\ast}\right)\right)$ is asymptotically normal, i.e.,
				{\small\begin{align}
					\label{eq:th2_1}
					\sqrt{t+1}\left(\widetilde{\mathbf{x}}(t)-\mathcal{P}\left(\mathbf{1}_{N}\otimes\btheta^{\ast}\right)\right)\overset{\mathcal{D}}{\Longrightarrow}\mathcal{N}\left(\mathbf{0}, \mathbf{S}_{R}\right),
					\end{align}}
				where
				{\small\begin{align}
					\label{eq:th2_2}
					&\mathbf{S}_{R}=\mathbf{P}\mathbf{M}\mathbf{P}^{\top}\nonumber\\
					&\left[\mathbf{M}\right]_{ij}=\left[\mathbf{P}\mathbf{Q}\left(\sum_{n=1}^{N}\mathcal{P}_{\mathcal{I}_n}\mathbf{H}_{n}^{\top}\mathbf{R}_{n}^{-1}\mathbf{H}_{n}\mathcal{P}_{\mathcal{I}_n}\right)\mathbf{Q}\mathbf{P}\right]_{ij}\nonumber\\&\times\left(\left[\mathbf{\Lambda}\right]_{ii}+\left[\mathbf{\Lambda}\right]_{jj}-1\right)^{-1},
					\end{align}}
				and $\mathbf{P}$ and $\mathbf{\Lambda}$ are orthonormal and diagonal matrices such that $\mathbf{P}^{\top}\mathbf{Q}\left(\sum_{n=1}^{N}\mathcal{P}_{\mathcal{I}_n}\mathbf{H}_{n}^{\top}\mathbf{R}_{n}^{-1}\mathbf{H}_{n}\mathcal{P}_{\mathcal{I}_n}\right)\mathbf{P} = \mathbf{\Lambda}$, in which, $\mathbf{Q} = \textit{diag}\left[\frac{1}{Q_{1}},\frac{1}{Q_{2}},\cdots,\frac{1}{Q_{N}}\right]$, with $Q_{i}$ denoting the number of agents interested in the $i$-th entry of $\btheta^{\ast}$.
			\end{Theorem}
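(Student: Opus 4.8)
The plan is to reduce the high-dimensional two-time-scale recursion \eqref{eq:cirfe_3} to a reduced, single time-scale stochastic approximation living on an $N$-dimensional consensus subspace, and then invoke a standard central limit theorem for stochastic approximation. First I would rewrite \eqref{eq:cirfe_3} in terms of the error $\mathbf{z}(t)=\widetilde{\mathbf{x}}(t)-\mathcal{P}(\mathbf{1}_N\otimes\btheta^{\ast})$. Two algebraic facts drive everything: Assumption \ref{as:3} gives $\mathbf{H}_n\mathcal{P}_{\mathcal{I}_n}=\mathbf{H}_n$, so the innovation residual in block $n$ collapses to $\boldsymbol{\gamma}_n(t)-\mathbf{H}_n\mathcal{P}_{\mathcal{I}_n}\mathbf{z}_n(t)$; and the block structure of \eqref{eq:LP} together with $\mathcal{P}_{\mathcal{I}_n}\mathcal{P}_{\mathcal{I}_r}\mathcal{P}_{\mathcal{I}_n}=\mathcal{P}_{\mathcal{I}_n\cap\mathcal{I}_r}$ and the symmetry of $\mathbf{L}(t)$ yields $\mathbf{L}_{\mathcal{P}}(t)\,\mathcal{P}(\mathbf{1}_N\otimes\btheta^{\ast})=\mathbf{0}$ for every realization. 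Hence
\begin{align*}
\mathbf{z}(t+1)=\mathbf{z}(t)-\beta_t\mathbf{L}_{\mathcal{P}}(t)\mathbf{z}(t)-\alpha_t\mathbf{G}_{\mathcal{P}}\mathbf{z}(t)+\alpha_t\mathcal{P}\mathbf{G}_H\mathbf{R}^{-1}\boldsymbol{\gamma}(t),
\end{align*}
where $\mathbf{G}_{\mathcal{P}}=\mathcal{P}\mathbf{G}_H\mathbf{R}^{-1}\mathbf{G}_H^{\top}\mathcal{P}$ is block diagonal with blocks $\mathcal{P}_{\mathcal{I}_n}\mathbf{H}_n^{\top}\mathbf{R}_n^{-1}\mathbf{H}_n\mathcal{P}_{\mathcal{I}_n}$. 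I then introduce the weighted averaging operator $\mathcal{A}=\mathbf{Q}(\mathbf{1}_N^{\top}\otimes\mathbf{I}_N)\mathcal{P}$, which aggregates the $r$-th component across the $Q_r$ interested agents, and the orthogonal projection $\Pi=\mathcal{P}(\mathbf{1}_N\otimes\mathbf{I}_N)\mathbf{Q}(\mathbf{1}_N^{\top}\otimes\mathbf{I}_N)\mathcal{P}$ onto the consensus subspace (idempotent because $\sum_n\mathcal{P}_{\mathcal{I}_n}=\mathbf{Q}^{-1}$). The central observation, checked directly from \eqref{eq:LP} and the symmetry of $\mathbf{L}(t)$, is $\mathcal{A}\,\mathbf{L}_{\mathcal{P}}(t)=\mathbf{0}$: the averaging operator annihilates the consensus potential, exactly as $\mathbf{1}^{\top}\mathbf{L}=\mathbf{0}$ in the homogeneous case. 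Writing $\mathbf{u}(t)=\mathcal{A}\mathbf{z}(t)$ and $\mathbf{z}_\perp(t)=(\mathbf{I}-\Pi)\mathbf{z}(t)$ splits the dynamics into a reduced part and a disagreement part.

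The decisive step, and the main obstacle, is to show that the disagreement decays strictly faster than $t^{-1/2}$, i.e.\ $\mathbb{E}\|\mathbf{z}_\perp(t)\|^2=o(1/t)$, so that it contributes nothing to the $\sqrt{t+1}$-scaled limit. This is exactly where the mixed time-scale design is used: since $\beta_t\sim t^{-\delta_1}$ decays more slowly than $\alpha_t\sim t^{-1}$, the consensus potential dominates on the disagreement subspace. The coercivity in Assumption \ref{as:5} (equivalently, the induced-subgraph connectivity of Lemma \ref{str_RLU}) forces the kernel of $\overline{\mathbf{L}_{\mathcal{P}}}$ within $\mathcal{S}_P$ to be precisely the consensus subspace, so $\overline{\mathbf{L}_{\mathcal{P}}}$ is positive definite on $\mathcal{S}_P\cap\ker(\Pi)$ and $\beta_t\overline{\mathbf{L}_{\mathcal{P}}}$ supplies a contraction of order $t^{-\delta_1}$ against a noise input of order $\alpha_t\sim t^{-1}$. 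A Lyapunov/induction argument on $\mathbb{E}\|\mathbf{z}_\perp(t)\|^2$, started from the $O(1/t)$ bound of Theorem \ref{th1.1} and controlling both the zero-mean Laplacian fluctuation $\widetilde{\mathbf{L}_{\mathcal{P}}}(t)$ and the martingale innovation noise, then yields a steady-state disagreement of order $(\alpha_t/\beta_t)^2\sim t^{-2(1-\delta_1)}$, which is $o(1/t)$ precisely because $\delta_1<1/2-1/(2+\epsilon_1)<1/2$. The $(2+\epsilon_1)$-moment condition of Assumption \ref{as:1} is what permits $\delta_1$ in this range while keeping the higher-moment bounds on the noise finite.

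Applying $\mathcal{A}$ to the error recursion and using $\mathcal{A}\,\mathbf{L}_{\mathcal{P}}(t)=\mathbf{0}$ collapses the two-time-scale recursion to the single-time-scale form
\begin{align*}
\mathbf{u}(t+1)=\mathbf{u}(t)-\alpha_t\,\mathbf{Q}\overline{\mathbf{G}}\,\mathbf{u}(t)+\alpha_t\,\boldsymbol{\xi}(t)+\alpha_t\,\mathbf{r}(t),
\end{align*}
where $\overline{\mathbf{G}}=\sum_{n}\mathcal{P}_{\mathcal{I}_n}\mathbf{H}_n^{\top}\mathbf{R}_n^{-1}\mathbf{H}_n\mathcal{P}_{\mathcal{I}_n}$, the drift arises from the consensus approximation $\mathbf{z}_n(t)\approx\mathcal{P}_{\mathcal{I}_n}\mathbf{u}(t)$ (valid up to $\mathbf{z}_\perp$), the martingale term $\boldsymbol{\xi}(t)=\mathcal{A}\mathcal{P}\mathbf{G}_H\mathbf{R}^{-1}\boldsymbol{\gamma}(t)$ has conditional covariance $\mathcal{A}\mathbf{G}_{\mathcal{P}}\mathcal{A}^{\top}=\mathbf{Q}\overline{\mathbf{G}}\mathbf{Q}$, and the residual $\mathbf{r}(t)$ is driven by $\mathbf{z}_\perp(t)$ and is therefore negligible after $\sqrt{t+1}$ scaling by the previous step. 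Since $\mathbf{Q}\overline{\mathbf{G}}$ is similar to the symmetric positive definite $\mathbf{Q}^{1/2}\overline{\mathbf{G}}\mathbf{Q}^{1/2}$, its eigenvalues are real and positive, and Assumption \ref{as:6} guarantees the smallest-eigenvalue/Hurwitz-type condition ($a\,\mathbf{Q}\overline{\mathbf{G}}$ having all eigenvalues above $1/2$) needed for the CLT.

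Finally I would invoke the standard central limit theorem for stochastic approximation (Fabian-type, \cite{nevelson1973stochastic}), whose hypotheses are met: martingale-difference noise with a constant limiting covariance $\mathbf{Q}\overline{\mathbf{G}}\mathbf{Q}$, a Lindeberg condition supplied by Assumption \ref{as:1}, and the eigenvalue condition from Assumption \ref{as:6}. This gives $\sqrt{t+1}\,\mathbf{u}(t)\Rightarrow\mathcal{N}(\mathbf{0},\mathbf{V})$, where $\mathbf{V}$ is the unique solution of the Lyapunov equation
\begin{align*}
\left(a\mathbf{Q}\overline{\mathbf{G}}-\tfrac{1}{2}\mathbf{I}\right)\mathbf{V}+\mathbf{V}\left(a\mathbf{Q}\overline{\mathbf{G}}-\tfrac{1}{2}\mathbf{I}\right)^{\top}=a^{2}\,\mathbf{Q}\overline{\mathbf{G}}\mathbf{Q}.
\end{align*}
Diagonalizing $\mathbf{P}^{\top}\mathbf{Q}\overline{\mathbf{G}}\mathbf{P}=\mathbf{\Lambda}$ and solving entrywise (with the gain normalization of Assumption \ref{as:6}) produces exactly the quotient form $[\mathbf{M}]_{ij}=[\mathbf{P}^{\top}\mathbf{Q}\overline{\mathbf{G}}\mathbf{Q}\mathbf{P}]_{ij}\,([\mathbf{\Lambda}]_{ii}+[\mathbf{\Lambda}]_{jj}-1)^{-1}$ and $\mathbf{S}_R=\mathbf{P}\mathbf{M}\mathbf{P}^{\top}$ of \eqref{eq:th2_2}. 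Lifting back through $\mathbf{z}(t)=\mathcal{P}(\mathbf{1}_N\otimes\mathbf{u}(t))+\mathbf{z}_\perp(t)$, and invoking Slutsky together with the vanishing of $\sqrt{t+1}\,\mathbf{z}_\perp(t)$ established above, transfers the limit to the full estimate $\widetilde{\mathbf{x}}(t)$ (the reduced covariance $\mathbf{V}$ being embedded via $\mathcal{P}(\mathbf{1}_N\otimes\cdot)$), which completes the proof.
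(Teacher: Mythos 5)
Your proposal is correct and follows essentially the same route as the paper's proof: both annihilate the consensus potential by averaging each component over the agents interested in it (your operator $\mathcal{A}$ is exactly the paper's stacked averaging matrix $\mathcal{S}_{\mbox{\scriptsize avg}}$ built from the per-entry selectors $\mathcal{S}_{i}$), both show the disagreement from this weighted average decays faster than $t^{-1/2}$ by exploiting the mixed time-scale separation $\beta_{t}/\alpha_{t}\to\infty$, both apply Fabian's stochastic-approximation CLT to the resulting single time-scale recursion with drift $\mathbf{Q}\sum_{n}\mathcal{P}_{\mathcal{I}_n}\mathbf{H}_{n}^{\top}\mathbf{R}_{n}^{-1}\mathbf{H}_{n}\mathcal{P}_{\mathcal{I}_n}$ and noise covariance $\mathbf{Q}\left(\sum_{n}\mathcal{P}_{\mathcal{I}_n}\mathbf{H}_{n}^{\top}\mathbf{R}_{n}^{-1}\mathbf{H}_{n}\mathcal{P}_{\mathcal{I}_n}\right)\mathbf{Q}$, and both transfer the limit back to $\widetilde{\mathbf{x}}(t)$ via the $\sqrt{t+1}$-scale indistinguishability. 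The remaining differences are bookkeeping only: the paper works entry-by-entry with induced subgraph Laplacians $\mathbf{L}_{\mathcal{P},i}(t)$ and proves almost-sure (rather than your mean-square) decay of the disagreement via its Lemmas \ref{main_res_lap} and \ref{le:l1.1}, and it reads the covariance directly off Fabian's formula rather than through your equivalent Lyapunov-equation formulation.
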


			\noindent Theorem \ref{th2} establishes the asymptotic normality of the time-scaled (auxilliary)~estimate sequence. Noting that the estimate sequence $\{\mathbf{x}_{n}(t)\}$ is a linear transformation of the auxiliary estimate sequence, we conclude that $\sqrt{t+1}\left(\mathbf{x}_{n}(t)-\btheta^{\ast}_{\mathcal{I}_{n}}\right)$ is also asymptotically normal. It is also to be noted that, when the interest sets of each agent is the identity matrix, i.e., every agent is interested to reconstruct the entire parameter, the matrix $\mathbf{Q}$ reduces to $\frac{\mathbf{I}}{N}$ and the asymptotic covariance reduces to that of the classical consensus+innovations linear parameter estimation case~(see \cite{kar2011convergence} and the corresponding update in \eqref{eq:benchmark_ci}). In this sense, the classical linear parameter estimation case is a special case of the problem being addressed here. It is to be noted that the case in which $\mathbf{Q}$ reduces to $\frac{1}{\widetilde{Q}}\mathbf{I}$ for some $\widetilde{Q}<N$~($\widetilde{Q}<N$ agents interested in each entry of $\btheta^{\ast}$), the asymptotic covariance reduces to,
			{\small\begin{align*}
				\mathbf{S}_{R}=\frac{a\mathbf{I}}{2\widetilde{Q}}+\frac{\left(\frac{1}{N}\sum_{n=1}^{N}\mathbf{H}_{n}^{\top}\mathbf{R}_{n}^{-1}\mathbf{H}_{n}+\frac{\mathbf{I}}{2a}\right)^{-1}}{\widetilde{Q}}.
				\end{align*}}
			The asymptotic covariance as derived in Theorem \ref{th2} explicitly showcases the heterogeneity in the scaling with respect to different components of the parameter through $\mathbf{Q}$, as different components have different cardinalities of interest sets.\\
			The convergence rate is unaffected by the communication of low-dimensional estimates, i.e., the mean square error of the proposed scheme decays as $1/t$ as characterized by Theorem \ref{th1.1}. However, by communicating low dimensional estimates which is due to the interest sets being strict subsets of $\{1,2,\cdots,N\}$, the variance of the estimation scheme is affected in terms of scaling by the number of agents. In particular, as demonstrated by Theorem \ref{th2}, the variance of the estimate sequence scales inversely with the number of agents interested to reconstruct the particular entry. Thus, larger the size of the communicated estimates lower is the variance. For instance, the variance scaling as $1/N$ is obtained if every agent is interested to reconstruct the entire parameter. Intuitively speaking, the difference in scaling can be attributed to averaging by a smaller number of agents against averaging by the entire network. However, note that the scaling is only with respect to the asymptotic covariance and as we will demonstrate later in section \ref{sec:sim} on line graphs, the finite time variance of the error estimates can be lower for the proposed algorithm with respect to agents which directly do not observe the component of the parameter being estimated.
			
			\section{Simulation Results}
			\label{sec:sim}
			\noindent In this section, we demonstrate the efficiency of the proposed algorithm $\mathcal{CIRFE}$ through simulation experiments on a synthetic dataset. In particular, we construct a $10$ node ring network, where every agent has exactly two nodes in its communication neighborhood. We number the nodes from $1$ to $10$. The neighbors for the $i$-th node in the communication graph are the nodes $(i-1)mod~10$ and $(i+1)mod~10$.
			
			\noindent The physical coupling which affects each agent's observations is assumed to be an agent's $2$-hop neighborhood. For instance, node $1$'s observations are affected by the value of the field at nodes $9$, $10$, $2$ and $3$. Thus, $\widetilde{\mathcal{I}}_{1} = \{9,10,2,3\}$. The interest set of each agent is taken to be all the field values which affects its observation. For instance, $\mathcal{I}_{1}=\{9,10,1,2,3\}$. We resort to a static Laplacian in the simulation setup here. We also note that in this case the inter-agent communication network is sparser than the physical network induced by measurement coupling. Each agent makes a scalar observation at each time. Hence, the observation matrix for each agent is given by a $5$-sparse $10$-dimensional row vector. To be specific, the observation matrices used in the simulation setup are given by $\mathbf{H}_{1}= [1.0,1.2,1.3,0,0,0,0,0,1.4,1.5]$, $\mathbf{H}_2=[1.5,1.0,1.2,1.3,0,0,0,0,0,1.4]$, $\mathbf{H}_3=[1.4,1.5,1.0,1.2,1.3,0,0,0,0,0]$, $\mathbf{H}_4=[0,1.4,1.5,1.0,1.2,1.3,0,0,0,0]$, $\mathbf{H}_5 = [0,0,1.4,1.5,1.0,1.2,1.3,0,0,0]$, $\mathbf{H}_6 = [0,0,0,1.4,1.5,1.0,1.2,1.3,0,0]$, $\mathbf{H}_7 = [0,0,0,0,1.4,1.5,1.0,1.2,1.3,0]$, $\mathbf{H}_8 = [0,0,0,0,0,1.4,1.5,1.0,1.2,1.3]$, $\mathbf{H}_9 = [1.3,0,0,0,0,0,1.4,1.5,1.0,1.2]$ and $\mathbf{H}_{10} = [1.2,1.3,0,0,0,0,0,1.4,1.5,1.0]$. The noise covariance $\mathbf{R}$ is taken to be $\mathbf{I}_{10}$. The parameter capturing the field values is taken to be $\btheta=[1.2,1.3,1.4,0.8,0.7,1.1,0.9,1.0,1.8,0.6]$. It can be seen that Assumption \ref{as:5} is satisfied, by verifying Lemma \ref{str_RLU} for the third parameter component $\theta^{\ast}_{3}$. \\
			We carry out $500$ Monte-Carlo simulations for analyzing the convergence of the parameter estimates. The estimates are initialized as $\mathbf{x}_{n}(0)=\mathbf{0}$ for $n=1,\cdots, 10$. The normalized error for the $n$-th agent at time $t$ is given by the quantity $\left\|\mathbf{x}_{n}(t)-\mathcal{P}_{\mathcal{I}_n}\btheta\right\|/5$, as each agent's interest set has the cardinality of $5$. Figure \ref{fig:2} shows the normalized error at every agent against the time index $t$.
			\begin{figure}
				\centering
				\captionsetup{justification=centering}
				\includegraphics[width=60mm]{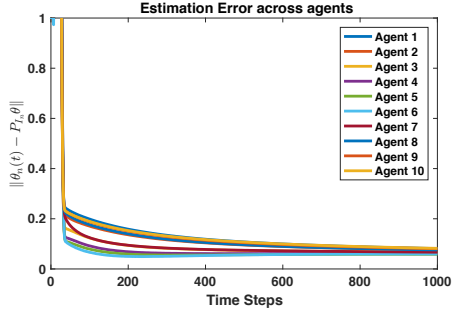}
				\caption{Convergence of normalized estimation error at each agent}\label{fig:2}
			\end{figure}
			\noindent In Figures \ref{fig:3} and \ref{fig:4} we compare the performance of $\mathcal{CIRFE}$ to the classical distributed estimator in \cite{kar2011convergence}~(see \eqref{eq:benchmark_ci} for the corresponding update), where each agent is interested in reconstructing the entire state or the parameter vector. We refer to the estimates of the distributed estimator in \cite{kar2011convergence} as ``classical" and ``classical-d"~(to be specified shortly) in the sequel.
			\begin{figure}
				\centering
				\captionsetup{justification=centering}
				\includegraphics[width=60mm]{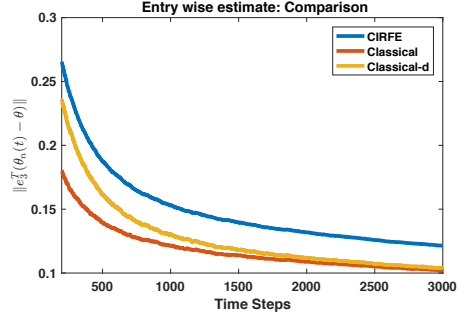}
				\caption{Comparison of $e_{3}^{\top}\btheta^{\ast}$ estimation error}\label{fig:3}
			\end{figure}
			\begin{figure}
				\centering
				\captionsetup{justification=centering}
				\includegraphics[width=60mm]{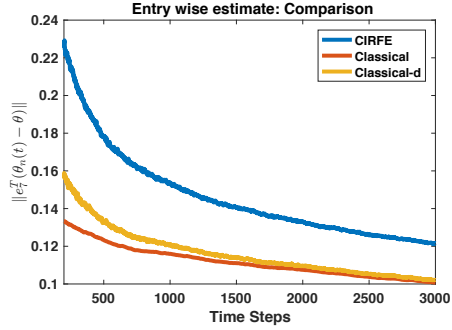}
				\caption{Comparison of $e_{7}^{\top}\btheta^{\ast}$ estimation error}\label{fig:4}
			\end{figure}
			\noindent In Figures \ref{fig:3} and \ref{fig:4}, ``Classical-d" represents the case in the algorithm in \cite{kar2011convergence}, where an agent does not observe the entry to be estimated and entirely depends on the neighborhood communication to estimate the quantity of interest. We specifically study the estimation performance of the agents in the ``Classical-d" case, as these are the agents that tend to increase the communication overhead considerably by being interested in estimates of components that they do not directly observe, relying on other agents possibly far off to obtain the desired information. Note that, in the current simulation setup, such class of agents do not exist for the proposed $\mathcal{CIRFE}$ algorithm. It can be observed from figures \ref{fig:3} and \ref{fig:4} that the estimation error in $\mathcal{CIRFE}$ is higher than that of the classical distributed estimator but at the same time exchanging $5$-dimensional or even smaller dimensional messages as opposed to $10$-dimensional messages in the case of the classical consensus+innovations estimator in \cite{kar2011convergence}. This analysis brings about an inherent trade-off between estimation error and the dimension of the messages exchanged between agents. It is also to be noted that the agents in case of $\mathcal{CIRFE}$ store $5$-dimensional vectors at each time step as opposed to $10$-dimensional vectors in the case of the classical. An intuitive way to interpret the higher estimation error is noting the fact that, effective for the algorithm $\mathcal{CIRFE}$, the estimation procedure for each entry of the parameter $\btheta^{*}$ effectively happens over a line graph, whereas for the Classical and ``Classical-d" procedures the communication graph to which the estimation procedure conforms to is a ring graph. In order to demonstrate the effectiveness of the algorithm $\mathcal{CIRFE}$, we consider a line graph, where the agents have the same sensing model as in the previous case except for the two edges of the line graph. Thus, agent $1$ and $10$'s observations are dependent on agent $2$ and agent $9$'s state. Furthermore, we assume that each agent's observation is physically coupled with the states of the agents' in its one-hop neighborhood. The interest set for the $1$st and $10$th agents are taken to be $\{1,2\}$ and $\{9,10\}$ respectively. All the other agents, have interest sets of cardinality three, i.e, itself and its one-hop neighborhood.
				\noindent In Figures \ref{fig:5} and \ref{fig:6} we compare the performance of $\mathcal{CIRFE}$ to the classical distributed estimator in \cite{kar2011convergence}~(see \eqref{eq:benchmark_ci} for the corresponding update), with the aforementioned line graph setup.
				\begin{figure}
					\centering
					\captionsetup{justification=centering}
					\includegraphics[width=60mm]{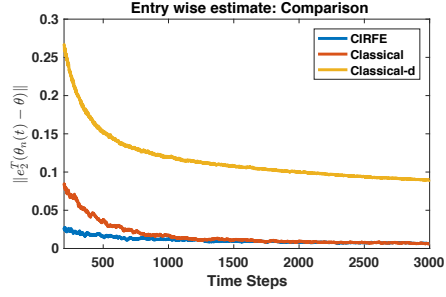}
					\caption{Comparison of $e_{2}^{\top}\btheta^{\ast}$ estimation error}\label{fig:5}
				\end{figure}
				\begin{figure}
					\centering
					\captionsetup{justification=centering}
					\includegraphics[width=60mm]{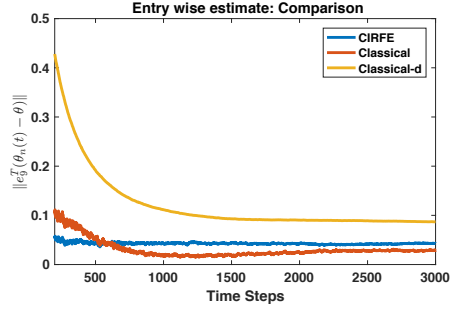}
					\caption{Comparison of $e_{9}^{\top}\btheta^{\ast}$ estimation error}\label{fig:6}
				\end{figure}
				For the ``classical-d" case, the agent selected was the farthest end of the graph. It is well known that under a line graph, the performance of a distributed protocol is affected due to poor connectivity. It can be seen from figures \ref{fig:5} and \ref{fig:6} that the performance of $\mathcal{CIRFE}$ closely resembles that of the classical benchmark algorithm with respect to an agent which observes the particular entry. However, for agents far away from the agent which observes the particular entry, $\mathcal{CIRFE}$ outperforms them. Intuitively speaking, while in this case, the communication protocol for each entry of the parameter in $\mathcal{CIRFE}$ conforms to a line graph, where the maximum number of vertices in each is $3$, for the benchmark the line graph consists of $10$ agents. In order to reinforce the effectiveness of $\mathcal{CIRFE}$, we ran experiments on a $30$ node line graph, where each agent except the nodes numbered $1$, $2$, $29$ and $30$, have an interest set of cardinality $5$. The nodes numbered $1$, $2$, $29$ and $30$ are assumed to have interest sets of cardinality $3$, $4$, $4$ and $3$ respectively. For instance the interest sets of agents $1$ and $2$ are given by $\{1,2,3\}$ and $\{1,2,3,4\}$ respectively. We assume that the physical coupling which affects each agent's observation is limited to its two-hop neighborhood.
				\noindent In Figures \ref{fig:7} and \ref{fig:8} we compare the performance of $\mathcal{CIRFE}$ to the classical distributed estimator in \cite{kar2011convergence}~(see \eqref{eq:benchmark_ci} for the corresponding update), with the aforementioned line graph setup.
				\begin{figure}
					\centering
					\captionsetup{justification=centering}
					\includegraphics[width=60mm]{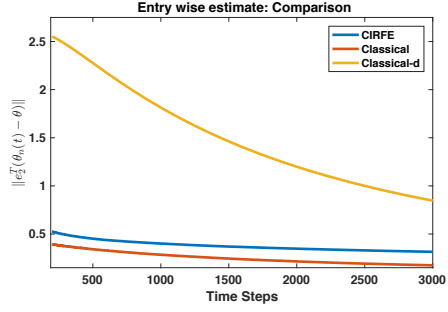}
					\caption{Comparison of $e_{2}^{\top}\btheta^{\ast}$ estimation error}\label{fig:7}
				\end{figure}
				\begin{figure}
					\centering
					\captionsetup{justification=centering}
					\includegraphics[width=60mm]{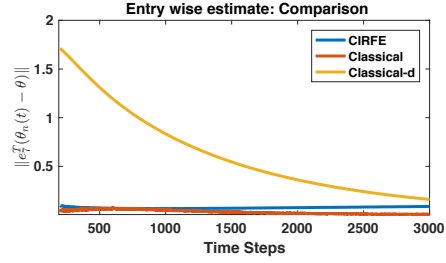}
					\caption{Comparison of $e_{7}^{\top}\btheta^{\ast}$ estimation error}\label{fig:8}
				\end{figure}
				For the ``classical-d" case, the agent selected was the farthest end of the graph as in the previous case. It can be seen from figures \ref{fig:7} and \ref{fig:7} that the performance of $\mathcal{CIRFE}$ closely resembles that of the classical benchmark algorithm with respect to an agent which observes the particular entry. However, for agents far away from the agent which observes the particular entry, $\mathcal{CIRFE}$ outperforms them.\\
				Technically speaking, in the classical case, an agent which is diameter number of steps away from a particular agent requires diameter number of time steps to fuse information from the other agent for an entry which it does not observe. In contrast with the classical case, the estimation of a particular entry of the parameter effectively happens over the induced subgraph with respect to the particular entry which typically will have smaller diameter as compared to the original graph. In conclusion, forcing an agent to obtain estimates of all parameter components may actually slow down the overall process in many scenarios of interest (especially situations involving large graphs with poor connectivity), as some of these components are only observed at agents geographically distant from the agent under consideration.
			
			\section{Proof of Main Results}
			\label{sec:proof_mainres}
			\begin{proof}[Proof of Theorem \ref{th1}]
				Define the sequence, $\{\widehat{\mathbf{x}}(t)\}$, as $\widehat{\mathbf{x}}(t) = \widetilde{\mathbf{x}}(t)-\mathcal{P}\left(\mathbf{1}_{N}\otimes\btheta^{\ast}\right)$.
				Then, we have,
				{\small\begin{align}
					\label{eq:tr1pr_1}
					&\widehat{\mathbf{x}}(t+1)=\widehat{\mathbf{x}}(t) - \left(\beta_{t}\overline{\mathbf{L}}_{\mathcal{P}}+\alpha_{t}\mathcal{P}\mathbf{G}_{H}\mathbf{R}^{-1}\mathbf{G}_{H}^{\top}\mathcal{P}\right)\widehat{\mathbf{x}}(t)\nonumber\\
					&-\beta_{t}\widetilde{\mathbf{L}}_{\mathcal{P}}(t)\widehat{\mathbf{x}}(t)+\alpha_{t}\mathcal{P}\mathbf{G}_{H}\mathbf{R}^{-1}\left(\mathbf{y}(t)-\mathbf{G}_{H}^{\top}\mathcal{P}\left(\mathbf{1}_{N}\otimes\btheta^{*}\right)\right).
					\end{align}}
				It is clear that $\{\widehat{\mathbf{x}}(t)\}$ is Markov with respect to its natural filtration $\{\mathcal{F}^{\widehat{\mathbf{X}}}_{t}\}$. Now, define the function $V:\mathbb{R}^{N^{2}}\longmapsto\mathbb{R}_{+}$ as, $V(\mathbf{y})=\left\|\mathbf{y}\right\|^{2}$, for all $\mathbf{y}$.
				We note that
				{\small\begin{align}
					\label{eq:tr1pr_2}
					\mathbb{E}_{\mathbf{\theta}^{\ast}}\left[V(\widehat{\mathbf{x}}(t+1))~|~\mathcal{F}^{\widetilde{\mathbf{x}}}_{t}\right]=\mathbb{E}_{\mathbf{\theta}^{\ast}}\left[V(\widehat{\mathbf{x}}(t+1))~|~\widehat{\mathbf{x}}(t)\right]
					\end{align}}
				By basic algebraic manipulations, we have,
				{\small\begin{align}
					\label{eq:tr1pr_3}
					&\mathbb{E}_{\mathbf{\theta}^{\ast}}\left[V(\widehat{\mathbf{x}}(t+1))~|~\widehat{\mathbf{x}}(t)\right] \nonumber\\&\leq  \widehat{\mathbf{x}}(t)^{\top}\left(\mathbf{I}-\beta_{t}\overline{\mathbf{L}}_{\mathcal{P}}-\alpha_{t}\mathcal{P}\mathbf{G}_{H}\mathbf{R}^{-1}\mathbf{G}_{H}^{\top}\mathcal{P}\right)^{2}\widehat{\mathbf{x}}(t)\nonumber\\&+\beta_{t}^{2}\mathbb{E}_{\mathbf{\theta}^{\ast}}\left[\left\|\widetilde{\mathbf{L}}_{\mathcal{P}}(t)\widehat{\mathbf{x}}(t)\right\|^{2}\right]\nonumber \\ & +\alpha^{2}_t\mathbb{E}_{\mathbf{\theta}^{\ast}}\left[\left\|\mathcal{P}\mathbf{G}_{H}\mathbf{R}^{-1}\left(\mathbf{y}(t)-\mathbf{G}_{H}^{\top}\mathcal{P}\left(\mathbf{1}_{N}\otimes\btheta^{*}\right)\right)\right\|^{2}\right].
					\end{align}}
				We note that $\beta_{t}\overline{\mathbf{L}}_{\mathcal{P}}+\alpha_{t}\mathcal{P}\mathbf{G}_{H}\mathbf{R}^{-1}\mathbf{G}_{H}^{\top}\mathcal{P}$ is uniformly elliptic on the subspace $\mathcal{S}_{\mathcal{P}}$, and it is precisely the subspace where $\{\widehat{\mathbf{x}}(t)\}$ resides. We thus prove the result by showing convergence to zero of the sequence $\{\widehat{\mathbf{x}}(t)\}$ through the subspace $\mathcal{S}_{\mathcal{P}}$. To this end, using the fact, that, for $\mathbf{y}\in\mathcal{S}_{\mathcal{P}}$,
				{\small\begin{align}
					\label{eq:tr1pr_4}
					\mathbf{y}^{\top}\left(\frac{\beta_0}{\alpha_0}\mathbf{L}_{\mathcal{P}}+\mathcal{P}\mathbf{G}_{H}\mathbf{R}^{-1}\mathbf{G}_{H}^{\top}\mathcal{P}\right)\mathbf{y}\geq c_{1}\left\|\mathbf{y}\right\|^{2},~\mbox{a.s.}
					\end{align}}
				By choosing, $t_{1}$ sufficiently large, we have for $\widehat{\mathbf{x}}(t)^{\top}\in\mathcal{S}_{\mathcal{P}}$ for all $t\geq t_1$,
				{\small\begin{align}
					\label{eq:tr1pr_5}
					&\widehat{\mathbf{x}}(t)^{\top}\left(\beta_{t}^{2}\overline{\mathbf{L}}_{\mathcal{P}}^{2}+\beta_{t}^{2}\mathbb{E}_{\mathbf{\theta}^{\ast}}\left\|\widetilde{\mathbf{L}}_{\mathcal{P}}(t)\right\|^{2}-\beta_{t}\overline{\mathbf{L}}_{\mathcal{P}}\right)\widehat{\mathbf{x}}(t)\nonumber\\
					&\le \left(c^{'}_1\beta_{t}^2-c^{'}_3\beta_{t}\right)\left\|\widehat{\mathbf{x}}(t)\right\|^{2} \le 0,
					\end{align}}
				where equality exists if $\widehat{\mathbf{x}}(t)=\mathcal{P}\left(\mathbf{1}_{N}\otimes \mathbf{a}\right)$, where $\mathbf{a}\in\mathbb{R}^{N}$.
				Thus, we obtain the following inequality:
				{\small\begin{align}
					\label{eq:thr1:6}
					&\mathbb{E}_{\mathbf{\theta}^{\ast}}\left[V(\widehat{\mathbf{x}}(t+1))~|~\widehat{\mathbf{x}}(t)=\mathbf{y}\right]-V(\mathbf{y})\leq c_{11}\alpha_{t}^{2}\left(1+\left\|\mathbf{y}\right\|^{2}\right)\nonumber\\&-\alpha_{t}c_{10}\left\|\mathbf{y}\right\|^{2}
					\end{align}}
				for all $\mathbf{y}\in\mathcal{S}_{\mathcal{P}}$.
				Now, define the function $W:\mathbb{T}_{+}\times\mathbb{R}^{N^{2}}\longmapsto\mathbb{R}_{+}$:
				{\small\begin{align}
					\label{eq:thr1:7}
					W(t,\mathbf{y})=\left(1+V(\mathbf{y})\right)\prod_{j=t}^{\infty}(1+c_{11}\alpha^{2}_{j}).
					\end{align}}
				From \eqref{eq:thr1:6} it can be shown that, for $\mathbf{y}\in\mathcal{S}_{\mathcal{P}}$,
				{\small\begin{align}
					\label{eq:thr1:8}
					&\mathbb{E}_{\mathbf{\theta}^{\ast}}\left[W(t+1,\widehat{\mathbf{x}}(t+1))~|~\widehat{\mathbf{x}}(t)=\mathbf{y}\right]-W(t,\mathbf{y})\nonumber\\&\leq -\alpha_{t}c_{10}\left\|\mathbf{y}\right\|^{2}\left(\prod_{j=t+1}^{\infty}(1+c_{11}\alpha^{2}_j)\right)\nonumber \\ & \leq -\alpha_{t}c_{10}\left\|\mathbf{y}\right\|^{2}
					\end{align}}
				Now consider $\varepsilon>0$, and let $V_{\varepsilon}$ denote the set
				{\small\begin{align}
					\label{eq:thr1:9}
					V_{\varepsilon}=\{\mathbf{y}\in\mathbb{R}^{N^{2}}~|~\left\|\mathbf{y}\right\|\geq\varepsilon\}\cap\mathcal{S}_{\mathcal{P}}
					\end{align}}
				Also, define $\tau_{\varepsilon}$ to be the exit time of the process $\{\widehat{\mathbf{x}}(t)\}$ from $V_{\varepsilon}$, i.e.,
				{\small\begin{align}
					\label{eq:thr1:10}
					\tau_{\varepsilon}=\inf\{i\in\mathbb{T}_{+}~|~\widehat{\mathbf{x}}(t)\notin V_{\varepsilon}\}
					\end{align}}
				We now show that $\tau_{\varepsilon}<\infty$ a.s. For mathematical simplicity, assume $\widehat{\mathbf{x}}(0)\in V_{\varepsilon}$. Consider the function
				{\small\begin{align}
					\label{thr1:11}
					\widetilde{W}(t,\mathbf{y})=W(t,\mathbf{y})+c_{10}\varepsilon^{2}\sum_{j=0}^{t-1}\alpha_j
					\end{align}}
				By \eqref{eq:thr1:8} it follows that, for $\mathbf{y}\in V_{\varepsilon}$,
				{\small\begin{align}
					\label{thr1:12}
					\mathbb{E}_{\mathbf{\theta}^{\ast}}\left[W(t+1,\widehat{\mathbf{x}}(t+1))~|~\widehat{\mathbf{x}}(t)=\mathbf{y}\right]-W(t,\mathbf{y})\leq -\alpha_{t}c_{10}\varepsilon^{2}
					\end{align}}
				and hence, it can be shown that, for $\mathbf{y}\in V_{\varepsilon}$,
				{\small\begin{align}
					\label{thr1:13}
					\mathbb{E}_{\mathbf{\theta}^{\ast}}\left[\widetilde{W}(t+1,\widehat{\mathbf{x}}(t+1))~|~\widehat{\mathbf{x}}(t)=\mathbf{y}\right]-\widetilde{W}(t,\mathbf{y})\leq 0
					\end{align}}
				Hence, we have that the stopped process $\{\widetilde{W}(\max\{t,\tau_{\varepsilon}\},\widehat{\mathbf{x}}(\max\{t,\tau_{\varepsilon}\}))\}$ is a super martingale. Being nonnegative it converges a.s. as $t\rightarrow\infty$. By \eqref{thr1:12}, we then conclude that the following term converges,
				{\small\begin{align}
					\label{thr1:14}
					\lim_{t\rightarrow\infty}c_{10}\varepsilon^{2}\sum_{j=0}^{(t\wedge\tau_{\varepsilon})-1}\alpha_j~\mbox{converges a.s.}
					\end{align}}
				Since, $\sum_{t\in\mathbb{T}_{+}}\alpha_t=\infty$, the above is possible, only if, $\tau_{\varepsilon}<\infty$ a.s.
				
				\noindent We thus note, that the process $\{\widehat{\mathbf{x}}(t)\}$ leaves the set $V_{\varepsilon}$ almost surely in finite time. Since, the process is constrained to lie in $\mathcal{S}_{\mathcal{P}}$ at all times, the finite time exit from $V_{\varepsilon}$ suggests,
				{\small\begin{align}
					\label{thr1:15}
					\mathbb{P}_{\mathbf{\theta}^{\ast}}\left(\inf\{t\in\mathbb{T}_{+}~|~\left\|\widehat{\mathbf{x}}(t)\right\|<\varepsilon\}<\infty\right)=1
					\end{align}}
				Since $\varepsilon>0$ is arbitrary, a subsequence almost surely converges to zero, and we have
				{\small\begin{align}
					\label{thr1:16}
					\mathbb{P}_{\mathbf{\theta}^{\ast}}\left(\liminf_{t\rightarrow\infty}\left\|\widehat{\mathbf{x}}(t)\right\|=0\right)=1
					\end{align}}
				Now going back to \eqref{eq:thr1:6} and noting that $\{\widehat{\mathbf{x}}(t)\}$ takes values in $\mathcal{S}_{\mathcal{P}}$, we conclude that the process $\{V(\widehat{\mathbf{x}}(t))\}$ is a nonnegative supermartingale. Hence,
				{\small\begin{align}
					\label{thr1:17}
					\mathbb{P}_{\mathbf{\theta}^{\ast}}\left(\lim_{t\rightarrow\infty}V(\widehat{\mathbf{x}}(t))~\mbox{exists}\right)=1
					\end{align}}
				Also, by \eqref{thr1:16}
				{\small\begin{align}
					\label{thr1:18}
					\mathbb{P}_{\mathbf{\theta}^{\ast}}\left(\liminf_{t\rightarrow\infty}V(\widehat{\mathbf{x}}(t))=0\right)=1
					\end{align}}
				and we conclude that
				{\small\begin{align}
					\label{thr1:19}
					\mathbb{P}_{\mathbf{\theta}^{\ast}}\left(\lim_{i\rightarrow\infty}\left\|\widehat{\mathbf{x}}(t)\right\|=0\right)=1
					\end{align}}
			\end{proof}
			
			\begin{proof}[Proof of Theorem \ref{th1.1}]
					From \eqref{eq:tr1pr_2}-\eqref{eq:tr1pr_4} in the proof of Theorem \ref{th1} we have, for $t\geq t_1$ ($t_1$ chosen appropriately large) and using the property that $\widehat{\mathbf{x}}(t)$ resides in $\mathcal{S}_{\mathcal{P}}$
					\begin{align*}
					&\mathbb{E}_{\mathbf{\theta}^{\ast}}\left[V(\widehat{\mathbf{x}}(t+1))~|~\widehat{\mathbf{x}}(t)\right]\leq  \left(1-c_{1}\alpha_{t}\right)\left\|\widehat{\mathbf{x}}(t)\right\|^{2}+\alpha^{2}_t c_{2} \nonumber\\
					&\Rightarrow\mathbb{E}_{\mathbf{\theta}^{\ast}}\left[\left\|\widehat{\mathbf{x}}(t+1)\right\|^{2}\right]\leq  \left(1-c_{1}\alpha_{t}\right)\left\|\widehat{\mathbf{x}}(t)\right\|^{2}+\alpha^{2}_t c_{2}\nonumber\\
					&\Rightarrow \mathbb{E}\left[\left\|\widetilde{\mathbf{x}}(t)-\mathcal{P}\left(\mathbf{1}_{N}\otimes\boldsymbol{\theta}^{\ast}\right)\right\|^{2}\right] = O\left(\frac{1}{t}\right).
					\end{align*}
					for appropriately chosen constants $c_1$ and $c_2$, where the conclusion in the last line follows from Lemma \ref{int_res_0}.
				\end{proof}
				\begin{proof}[Proof of Theorem \ref{th2}]
					\noindent Let the number of agents interested in the $i$-th entry of $\btheta^{\ast}$ be $Q_{i}$. To get the vector of estimates of the $i$-th entry of $\btheta^{\ast}$, left multiply the selector matrix $\mathcal{S}_{i}\in\mathbb{R}^{Q_{i}\times N^{2}}$ and noting that $S_{i}\mathbf{L}_{\mathcal{P}}(t)\widetilde{\mathbf{x}}(t) = \mathbf{L}_{\mathcal{P},i}(t)\widetilde{\mathbf{x}}(i,t)$, where $\mathbf{L}_{\mathcal{P},i}(t)\in \mathbb{R}^{Q_{i}\times Q_{i}} $ is the subgraph induced by the interest sets for the $i$-th entry of $\btheta^{\ast}$, which is connected as a result of a sufficient condition which enforced Assumption \ref{as:5} and $\widetilde{\mathbf{x}}(i,t)\in\mathbb{R}^{Q_{i}}$ is the vector of estimates for the $i$-th  entry of $\btheta^{\ast}$.
					
					\noindent A vector $\mathbf{z}\in\mathbb{R}^{N^{2}}$ may be decomposed as $\mathbf{z} = \mathbf{z}_{\mathcal{C}} + \mathbf{z}_{\mathcal{C}^{\perp}}$ with $\mathbf{z}_{\mathcal{C}}$ denoting its projection on the consensus or agreement subspace $\mathcal{C}$, $\mathcal{C}= \left\{\mathbf{z}\in\mathbb{R}^{N^{2}}|\mathbf{z}=\mathbf{1}_{N}\otimes\mathbf{a}~\textit{for~some}~\mathbf{a}\in\mathbb{R}^N\right\}$.
					\noindent We first prove the following Lemma regarding the mean connectedness of the subgraphs $\mathbf{L}_{\mathcal{P},i}(t)$.
					\begin{Lemma}
						\label{main_res_lap}
						Let $\left\{\mathbf{z}_{t}\right\}$ be an $\mathbb{R}^{N^{2}}$ valued $\mathcal{F}_{t}$-adapted process such that $\mathbf{z}_{t}\in\mathcal{C}^{\perp}$ for all $t$. Also, let $\left\{\mathbf{L}_{t}\right\}$ be an i.i.d. sequence of Laplacian matrices as in assumption \ref{as:4} that satisfies
						{\small\begin{align}
							\label{eq:mrl1}
							\lambda_{2}\left(\overline{\mathbf{L}}\right)=\lambda_{2}\left(\mathbb{E}\left[\mathbf{L}_{t}\right]\right) > 0,
							\end{align}}
						where $\mathbf{L}_t$ is $\mathcal{F}_{t+1}$-adapted and independent of $\mathcal{F}_{t}$ for all $t$.
						{\small\begin{align}
							\label{eq:l2_11}
							\left\|\left(\mathbf{I}_{N^{2}}-\left(\mathbf{L}(t)\otimes\mathbf{I}_{N}\right)\right)\mathbf{z}_{t}\right\|\le (1-r_{t})\left\|\mathbf{z}_{t}\right\|,
							\end{align}}
						where $\{r_t\}$ is a $\mathbb{R}^{+}$ valued $\mathcal{F}_{t+1}$ process satisfying
						{\small\begin{align}
							\label{eq:l2_10}
							\mathbb{E}\left[r_{t}|\mathcal{F}_{t}\right]\geq \underline{p}\beta_{t}\frac{\lambda_{2}\left(\overline{\mathbf{L}}\right)}{4|\mathcal{L}|},
							\end{align}}
						where $\mathcal{L}$ denotes the set of all possible Laplacians.
					\end{Lemma}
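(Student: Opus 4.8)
The plan is to read the stated bound with the consensus weight made explicit, i.e., to analyze the operator $\mathbf{I}_{N^2}-\beta_t(\mathbf{L}(t)\otimes\mathbf{I}_N)$ acting on $\mathbf{z}_t\in\mathcal{C}^{\perp}$, and to \emph{define} $r_t$ through the realized one-step contraction, $1-r_t=\|(\mathbf{I}_{N^2}-\beta_t(\mathbf{L}(t)\otimes\mathbf{I}_N))\mathbf{z}_t\|/\|\mathbf{z}_t\|$ (the inequality being trivial when $\mathbf{z}_t=\mathbf{0}$). With this definition the asserted norm inequality holds as an identity, so the entire content of the lemma is the lower bound on $\mathbb{E}[r_t\mid\mathcal{F}_t]$. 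First I would record that each $\mathbf{L}(t)$ is a graph Laplacian, hence symmetric positive semidefinite with $\mathbf{L}(t)\mathbf{1}_N=\mathbf{0}$, so $\mathcal{C}\subseteq\ker(\mathbf{L}(t)\otimes\mathbf{I}_N)$; for $t$ large enough that $\beta_t\|\mathbf{L}(t)\|\le 1$ uniformly over the finite set $\mathcal{L}$, the map is non-expansive on $\mathcal{C}^{\perp}$, which guarantees $r_t\ge 0$.

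Next I would expand the squared norm to expose the driving quadratic form, $\|(\mathbf{I}-\beta_t(\mathbf{L}(t)\otimes\mathbf{I}_N))\mathbf{z}_t\|^2=\|\mathbf{z}_t\|^2-2\beta_t\,\mathbf{z}_t^{\top}(\mathbf{L}(t)\otimes\mathbf{I}_N)\mathbf{z}_t+\beta_t^2\|(\mathbf{L}(t)\otimes\mathbf{I}_N)\mathbf{z}_t\|^2$. Using $\|(\mathbf{L}(t)\otimes\mathbf{I}_N)\mathbf{z}_t\|^2\le\|\mathbf{L}(t)\|\,\mathbf{z}_t^{\top}(\mathbf{L}(t)\otimes\mathbf{I}_N)\mathbf{z}_t$ (valid since $\mathbf{L}(t)\succeq\mathbf{0}$ gives $\mathbf{L}(t)^2\preceq\|\mathbf{L}(t)\|\,\mathbf{L}(t)$) and restricting to $t$ large so that $\beta_t\|\mathbf{L}(t)\|\le 1$, the second-order term is dominated by the first, giving the per-realization bound $\|(\mathbf{I}-\beta_t(\mathbf{L}(t)\otimes\mathbf{I}_N))\mathbf{z}_t\|^2\le\|\mathbf{z}_t\|^2-\beta_t\,\mathbf{z}_t^{\top}(\mathbf{L}(t)\otimes\mathbf{I}_N)\mathbf{z}_t$. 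Independence of $\mathbf{L}(t)$ from $\mathcal{F}_t$ with $\mathbb{E}[\mathbf{L}(t)]=\overline{\mathbf{L}}$, together with the fact that $\mathbf{z}_t\in\mathcal{C}^{\perp}$ is $\mathcal{F}_t$-measurable and $\mathcal{C}^{\perp}$ is the orthogonal complement of the $\lambda=0$ eigenspace of $\overline{\mathbf{L}}\otimes\mathbf{I}_N$, yields $\mathbb{E}[\mathbf{z}_t^{\top}(\mathbf{L}(t)\otimes\mathbf{I}_N)\mathbf{z}_t\mid\mathcal{F}_t]=\mathbf{z}_t^{\top}(\overline{\mathbf{L}}\otimes\mathbf{I}_N)\mathbf{z}_t\ge\lambda_2(\overline{\mathbf{L}})\|\mathbf{z}_t\|^2$, which is exactly where Assumption \ref{as:4} enters.

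The crux, and the source of the $\underline{p}/(4|\mathcal{L}|)$ factor, is that a single realization $\mathbf{L}(t)$ may correspond to a \emph{disconnected} graph, so $r_t$ cannot be bounded below pointwise; the contraction is guaranteed only on average. I would convert the averaged statement into a per-step event by a pigeonhole over the finite realization set: writing $\mathbf{z}_t^{\top}(\overline{\mathbf{L}}\otimes\mathbf{I}_N)\mathbf{z}_t=\sum_{\mathbf{L}\in\mathcal{L}}\mathbb{P}(\mathbf{L}(t)=\mathbf{L})\,\mathbf{z}_t^{\top}(\mathbf{L}\otimes\mathbf{I}_N)\mathbf{z}_t\ge\lambda_2(\overline{\mathbf{L}})\|\mathbf{z}_t\|^2$ with at most $|\mathcal{L}|$ summands, at least one ($\mathcal{F}_t$-measurable) realization $\mathbf{L}^{\ast}$ satisfies $\mathbf{z}_t^{\top}(\mathbf{L}^{\ast}\otimes\mathbf{I}_N)\mathbf{z}_t\ge\frac{\lambda_2(\overline{\mathbf{L}})}{|\mathcal{L}|}\|\mathbf{z}_t\|^2$ and occurs with probability at least $\underline{p}$. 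On the event $\{\mathbf{L}(t)=\mathbf{L}^{\ast}\}$ the per-realization squared-norm bound gives a strict contraction $r_t\ge c\,\beta_t\lambda_2(\overline{\mathbf{L}})/|\mathcal{L}|$ (with $c$ absorbing the $\sqrt{1-x}\le 1-x/2$ step and the rounding of constants to $1/4$), while on the complementary event non-expansiveness gives $r_t\ge 0$; taking the conditional expectation then produces $\mathbb{E}[r_t\mid\mathcal{F}_t]\ge\underline{p}\,\beta_t\,\lambda_2(\overline{\mathbf{L}})/(4|\mathcal{L}|)$. \textbf{The main obstacle} is precisely this pointwise-versus-average gap: one must extract a genuinely positive, $\mathcal{F}_t$-measurable contraction rate from the mean-connectivity hypothesis alone, and it is the finiteness of $\mathcal{L}$ together with the uniform lower bound $\underline{p}$ on realization probabilities that makes this extraction possible; the only secondary care is to keep $t$ large enough that the $\beta_t^2$ term never overwhelms the first-order contraction.
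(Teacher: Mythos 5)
Your proposal is correct and follows essentially the same route as the paper's proof: you define $r_t$ as the realized per-step contraction factor, use the pigeonhole/measurable-selection argument over the finite set $\mathcal{L}$ to produce an $\mathcal{F}_t$-measurable ``good'' Laplacian $\mathbf{L}^{\ast}$ with $\mathbf{z}_t^{\top}(\mathbf{L}^{\ast}\otimes\mathbf{I}_N)\mathbf{z}_t\geq \frac{\lambda_2(\overline{\mathbf{L}})}{|\mathcal{L}|}\|\mathbf{z}_t\|^2$, invoke non-expansiveness on the complementary realizations for $t$ large, and weight the good event by $\underline{p}$ in the conditional expectation --- exactly the paper's chain \eqref{eq:l2_2}--\eqref{eq:l2_102}. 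The only cosmetic difference is your absorption of the $\beta_t^2$ term via $\mathbf{L}^2\preceq\|\mathbf{L}\|\,\mathbf{L}$ rather than the paper's generic bound $c_1\beta_t^2\|\mathbf{z}_t\|^2$, which is immaterial.
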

					\noindent The following Lemmas will be used to quantify the rate of convergence of distributed vector or matrix valued recursions to their network-averaged behavior.
					\noindent\begin{Lemma}
						\label{le:l1.1}
						Let $\{z_{t}\}$ be an $\mathbb{R}^{+}$ valued $\mathcal{F}_{t}$-adapted process that satisfies
						{\small\begin{align*}
							z_{t+1} \leq \left(1-r_{1}(t)\right)z_{t} +r_{2}(t)U_t(1+J_t),
							\end{align*}}
						\noindent where $\{r_1(t)\}$ is an $\mathcal{F}_{t+1}$-adapted process, such that for all $t$, $r_1(t)$ satisfies $0\leq r_1(t)\leq 1$ and
						{\small\begin{align*}
							a_{1}\leq \mathbb{E}\left[r_1(t)|\mathcal{F}_{t}\right] \leq \frac{1}{(t+1)^{\delta_{1}}}
							\end{align*}}
						\noindent with $a_{1} > 0$ and $0\leq \delta_{1} < 1$. The sequence $\{r_2(t)\}$ is deterministic and $\mathbb{R}^{+}$ valued and satisfies $r_2(t)\leq \frac{a_{2}}{(t+1)^{\delta_{2}}}$ with $a_2 > 0$ and $\delta_{2}>0$. Further, let $\{U_t\}$ and $\{J_t\}$ be $\mathbb{R}^{+}$ valued $\mathcal{F}_{t}$ and $\mathcal{F}_{t+1}$ adapted processes, respectively, with $\sup_{t\geq 0} \left\|U_t\right\|<\infty$ a.s. The process $\left\{J_{t}\right\}$ is i.i.d. with $J_{t}$ independent of $\mathcal{F}_{t}$ for each $t$ and satisfies the moment condition $\mathbb{E}\left[\left\|J_{t}\right\|^{2+\epsilon_{1}} \right]<\kappa<\infty$ for some $\epsilon_{1}>0$ and a constant $\kappa > 0$. Then, for every $\delta_{0}$ such that
						$0\leq \delta_0 < \delta_{2}-\delta_{1}-\frac{1}{2+\epsilon_{1}}$, we have $(t+1)^{\delta_{0}}z_{t}\to 0$ a.s. as $t\to\infty$.
					\end{Lemma}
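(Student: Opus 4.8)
The plan is to unroll the scalar recursion and split the homogeneous (initial‑condition) part from the forced (noise) part, then control each after weighting by $(t+1)^{\delta_0}$. Write $\Phi(t,s)=\prod_{k=s}^{t}(1-r_1(k))$ with the convention $\Phi(t,t+1)=1$; since $0\le r_1(k)\le 1$ every factor lies in $[0,1]$, so $\Phi$ is a genuine (random) contraction. Iterating the hypothesis from a fixed large $t_0$ gives, for $t\ge t_0$, $z_{t+1}\le \Phi(t,t_0)\,z_{t_0}+\sum_{s=t_0}^{t}\Phi(t,s+1)\,r_2(s)\,U_s\,(1+J_s)$. It then suffices to show that both $(t+1)^{\delta_0}\Phi(t,t_0)z_{t_0}$ and $(t+1)^{\delta_0}$ times the convolution sum tend to $0$ a.s.

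First I would pin down the decay of $\Phi$. Using $1-x\le e^{-x}$, $\Phi(t,s)\le\exp\!\big(-\sum_{k=s}^{t}r_1(k)\big)$. The key is to replace the random exponent by its predictable version $\psi_t:=\sum_{k\le t}\mathbb{E}[r_1(k)\mid\mathcal{F}_k]$, which by the lower bound on the conditional mean (of order $(t+1)^{-\delta_1}$) satisfies $\psi_t\gtrsim (t+1)^{1-\delta_1}$. Since $\{r_1(k)-\mathbb{E}[r_1(k)\mid\mathcal{F}_k]\}$ are bounded ($|\cdot|\le 1$) martingale differences, an exponential‑supermartingale / Azuma estimate together with Borel--Cantelli yields, a.s.\ for all large $s\le t$, $\sum_{k=s}^{t}r_1(k)\ge c\,(\psi_t-\psi_s)-C$, hence $\Phi(t,s)\le C'\exp(-c(\psi_t-\psi_s))$. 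Because $1-\delta_1>0$, $\exp(-c\,\psi_t)$ decays faster than any polynomial, so $(t+1)^{\delta_0}\Phi(t,t_0)z_{t_0}\to 0$ a.s., disposing of the homogeneous term.

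For the noise term I would first remove the heavy tail of $J_s$. By Markov's inequality and $\mathbb{E}[\|J_s\|^{2+\epsilon_1}]\le\kappa$, $\mathbb{P}(J_s>(s+1)^{\tau})\le\kappa(s+1)^{-\tau(2+\epsilon_1)}$, which is summable as soon as $\tau>1/(2+\epsilon_1)$; Borel--Cantelli then gives $J_s\le (s+1)^{\tau}$ for all large $s$ a.s. On this full‑probability event, using $U_s\le\bar U:=\sup_s U_s<\infty$ a.s.\ and $r_2(s)\le a_2(s+1)^{-\delta_2}$, the forcing at step $s$ is $O((s+1)^{\tau-\delta_2})$, while the kernel $\exp(-c(\psi_t-\psi_{s+1}))$ has effective width $\asymp (t+1)^{\delta_1}$ (the reciprocal of the local slope $\psi_t-\psi_{t-1}\asymp (t+1)^{-\delta_1}$). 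Summing the kernel against the forcing gives $\sum_{s=t_0}^{t}\Phi(t,s+1)r_2(s)U_s(1+J_s)=O\big((t+1)^{\delta_1+\tau-\delta_2}\big)$ a.s. Weighting by $(t+1)^{\delta_0}$ and letting $\tau\downarrow 1/(2+\epsilon_1)$ shows this vanishes precisely when $\delta_0<\delta_2-\delta_1-1/(2+\epsilon_1)$, the asserted range; combined with the homogeneous estimate this yields $(t+1)^{\delta_0}z_t\to 0$ a.s.

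The main obstacle is the a.s.\ control of the random product $\Phi(t,s)$ uniformly over all windows $t_0\le s\le t$: because $r_1(k)$ is only conditionally lower bounded and is $\mathcal{F}_{k+1}$‑measurable, one must convert $\sum_{k=s}^{t}r_1(k)$ into its predictable counterpart $\psi_t-\psi_s$ with a constant uniform in the pair $(s,t)$. Near the diagonal ($\psi_t-\psi_s=O(1)$) the crude bound $\Phi\le 1$ suffices, but in the regime where $\psi_t-\psi_s$ is large the concentration estimate must hold simultaneously for a growing family of windows, which is where the bounded‑increment martingale inequality together with a Borel--Cantelli union over dyadic blocks does the work. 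Matching the exact threshold exponent then reduces to the routine bookkeeping of balancing the truncation level $\tau$ against the available moment order $2+\epsilon_1$.
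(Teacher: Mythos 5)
A preliminary remark on the comparison itself: the paper contains no proof of this lemma --- it is stated as a known auxiliary result imported from the consensus+innovations literature (alongside Lemma 4.1 of \cite{kar2013distributed}) --- so your argument must stand entirely on its own. You also silently repaired the typo in the hypothesis, reading it as $a_1/(t+1)^{\delta_1}\le\mathbb{E}[r_1(t)\,|\,\mathcal{F}_t]\le 1$; that is the intended statement (as displayed, the condition is vacuous for large $t$ whenever $\delta_1>0$). Your skeleton --- unrolling into $\Phi(t,t_0)z_{t_0}$ plus a convolution, killing the tail of $J_s$ by Markov and Borel--Cantelli at level $(s+1)^{\tau}$ with $\tau>1/(2+\epsilon_1)$, and a Laplace-type estimate of the convolution with kernel width $(t+1)^{\delta_1}$ --- is the right architecture, and your exponent bookkeeping does produce the stated threshold.

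The gap is in the concentration step, and it is not cosmetic. You justify $\sum_{k=s}^{t}r_1(k)\ge c\,(\psi_t-\psi_s)-C$ (with your compensator $\psi_t=\sum_{k\le t}\mathbb{E}[r_1(k)\,|\,\mathcal{F}_k]$) by the fact that the martingale differences $r_1(k)-\mathbb{E}[r_1(k)\,|\,\mathcal{F}_k]$ are bounded by $1$, i.e., by an Azuma-type bound. That tool is too weak here: over a window of length $n$ the Azuma fluctuation scale is $\sqrt{n}$, whereas the signal $\psi_t-\psi_s$ can be as small as order $n(t+1)^{-\delta_1}$, so Azuma certifies only windows with $n\gtrsim(t+1)^{2\delta_1}\log t$. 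Running your own near-diagonal/far-field split with that larger threshold inflates the near-diagonal contribution to $O\bigl((t+1)^{2\delta_1+\tau-\delta_2}\log t\bigr)$, so you would only prove the conclusion for $\delta_0<\delta_2-2\delta_1-1/(2+\epsilon_1)$; and when $\delta_1\ge 1/2$ (allowed, since the statement only requires $\delta_1<1$) Azuma certifies nothing at all. A concrete witness: take $r_1(k)$ independent Bernoulli$\bigl((k+1)^{-3/4}\bigr)$; on the window $[t/2,t]$ the signal is of order $t^{1/4}$ while the Azuma scale is of order $t^{1/2}$, so the bounded-difference bound is vacuous there, yet the lemma must still hold. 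The repair is exactly the other half of your hedge (``exponential supermartingale''), made quantitative: since $0\le r_1(k)\le 1$, the conditional variance of each increment is dominated by its conditional mean, so Freedman's inequality --- or, more simply, the supermartingale $\exp\bigl(-\sum_{k=s}^{t}r_1(k)+(1-e^{-1})\sum_{k=s}^{t}\mathbb{E}[r_1(k)\,|\,\mathcal{F}_k]\bigr)$, whose expectation is at most $1$ by iterated conditioning and the chord bound $e^{-x}\le 1-(1-e^{-1})x$ on $[0,1]$ --- gives failure probability $\exp\bigl(-c\,(\psi_t-\psi_s)\bigr)$, with the exponent bounded below by the deterministic quantity $a_1\sum_{k=s}^{t}(k+1)^{-\delta_1}$ (this also sidesteps the fact that $\psi_t-\psi_s$ is itself random). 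With that substitution, your union bound over windows with $\psi_t-\psi_s\gtrsim\log t$, the trivial bound $\Phi\le 1$ near the diagonal, and the rest of your computation go through and yield the claimed range $\delta_0<\delta_2-\delta_1-1/(2+\epsilon_1)$.
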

					\begin{Lemma}[Lemma 4.1 in \cite{kar2013distributed}]
						\label{int_res_0}
						\noindent Consider the scalar time-varying linear system
						{\small\begin{align}
							\label{eq:int_res_0}
							u(t+1)\leq(1-r_{1}(t))u(t)+r_{2}(t),
							\end{align}}
						\noindent where $\{r_{1}(t)\}$ is a sequence, such that
						{\small\begin{align}
							\label{eq:int_res_0_1}
							\frac{a_{1}}{(t+1)^{\delta_{1}}}\leq r_{1}(t)\leq 1
							\end{align}}
						\noindent with $a_{1} >0, 0\leq\delta_{1}\leq 1$, whereas the sequence $\{r_{2}(t)\}$ is given by
						{\small\begin{align}
							\label{eq:int_res_0_2}
							r_{2}(t)\le\frac{a_{2}}{(t+1)^{\delta_{2}}}
							\end{align}}
						\noindent with $a_{2}>0, \delta_{2}\geq 0$. Then, if $u(0)\geq 0$ and $\delta_{1} < \delta_{2}$, we have
						{\small\begin{align}
							\label{eq:int_res_0_3}
							\lim_{t\to\infty}(t+1)^{\delta_0}u(t)=0,
							\end{align}}
						\noindent for all $0\le\delta_{0}<\delta_{2}-\delta_{1}$. Also, if $\delta_{1}=\delta_{2}$, then the sequence $\{u(t)\}$ stays bounded, i.e. $\sup_{t\geq 0}\left\|u(t)\right\|<\infty$.
					\end{Lemma}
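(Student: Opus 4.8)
The plan is to treat this as a purely deterministic difference-inequality (a Chung-type lemma) and to pin down the decay rate by comparing $\{u(t)\}$ against an explicit polynomially decaying candidate sequence. First I would record two elementary reductions. Since $u(0)\ge 0$, $0\le 1-r_1(t)\le 1$, and $r_2(t)\ge 0$, an immediate induction gives $u(t)\ge 0$ for all $t$. Consequently, whenever I want an \emph{upper} bound I may replace $r_1(t)$ by its lower envelope, because $(1-r_1(t))u(t)\le \left(1-\tfrac{a_1}{(t+1)^{\delta_1}}\right)u(t)$ for $u(t)\ge 0$, and similarly replace $r_2(t)$ by $\tfrac{a_2}{(t+1)^{\delta_2}}$. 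This collapses the hypothesis to the single worst-case recursion $u(t+1)\le \left(1-\tfrac{a_1}{(t+1)^{\delta_1}}\right)u(t)+\tfrac{a_2}{(t+1)^{\delta_2}}$, which is all I then need to analyze.

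For the first claim ($\delta_1<\delta_2$), I fix $\delta_0<\delta_2-\delta_1$ and choose an auxiliary exponent $\delta_0'$ with $\delta_0<\delta_0'<\delta_2-\delta_1$, and prove by induction that $u(t)\le C(t+1)^{-\delta_0'}$ for all $t\ge t_0$, for suitable $C$ and $t_0$. Substituting the candidate into the worst-case recursion and using the tangent-line bound $(t+2)^{-\delta_0'}\ge (t+1)^{-\delta_0'}\bigl(1-\tfrac{\delta_0'}{t+1}\bigr)$, the induction step reduces, after multiplying through by $(t+1)^{\delta_1+\delta_0'}$, to verifying
\[
a_1 C \ \ge\ \frac{a_2}{(t+1)^{\delta_2-\delta_1-\delta_0'}}+\frac{C\,\delta_0'}{(t+1)^{1-\delta_1}}.
\]
When $\delta_1<1$ both exponents $\delta_2-\delta_1-\delta_0'$ and $1-\delta_1$ are strictly positive, so the right-hand side tends to $0$; a convenient bookkeeping point is that the $C$-dependence cancels in the second term, so the threshold $t_0$ can be chosen independently of $C$, after which $C=\max\{2a_2/a_1,\ u(t_0)(t_0+1)^{\delta_0'}\}$ closes both the base case and the step. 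This yields $u(t)=O((t+1)^{-\delta_0'})$, and the stated limit follows since $(t+1)^{\delta_0}u(t)\le C(t+1)^{-(\delta_0'-\delta_0)}\to 0$.

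For the second claim ($\delta_1=\delta_2=:\delta$), boundedness follows from a simple invariance induction rather than a rate argument. Setting $M:=\max\{u(0),\,a_2/a_1\}$, if $u(t)\le M$ then
\[
u(t+1)\le \Bigl(1-\tfrac{a_1}{(t+1)^{\delta}}\Bigr)M+\tfrac{a_2}{(t+1)^{\delta}}=M-\frac{a_1 M-a_2}{(t+1)^{\delta}}\le M,
\]
because $a_1 M\ge a_1\cdot\tfrac{a_2}{a_1}=a_2$. Hence $\sup_t u(t)\le M<\infty$.

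The main obstacle is the boundary regime $\delta_1=1$ in the first claim. There $1-\delta_1=0$, so the second term on the right-hand side of the key inequality becomes the constant $C\delta_0'$ instead of vanishing, and closing the induction forces $a_1\ge\delta_0'$ in the limit. Thus with only $a_1>0$ one recovers $(t+1)^{\delta_0}u(t)\to 0$ merely for $\delta_0<\min\{a_1,\delta_2-\delta_1\}$; the full advertised range $\delta_0<\delta_2-\delta_1$ requires the gap condition $a_1\ge\delta_2-\delta_1$ relating the decay constant to the target rate. I would note that this is exactly the regime in which the lemma is invoked (Theorem \ref{th1.1}, where $\delta_1=1$, $\delta_2=2$, and $a_1=a c_1$), and that the constant condition $a c_1\ge 1=\delta_2-\delta_1$ is precisely what Assumption \ref{as:6} supplies, so the $O(1/t)$ rate is genuinely recovered there; away from the boundary ($\delta_1<1$) no such constraint is needed.
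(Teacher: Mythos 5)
The paper contains no proof of this lemma at all: it is imported verbatim, with attribution, as Lemma 4.1 of \cite{kar2013distributed}, so your argument can only be judged on its own merits and against that cited source. On those terms it is correct wherever the statement itself is correct. The reduction to the worst-case recursion $u(t+1)\le\bigl(1-\tfrac{a_1}{(t+1)^{\delta_1}}\bigr)u(t)+\tfrac{a_2}{(t+1)^{\delta_2}}$, the tangent bound $(t+2)^{-\delta_0'}\ge(t+1)^{-\delta_0'}\bigl(1-\tfrac{\delta_0'}{t+1}\bigr)$, the induction closed by $C=\max\{2a_2/a_1,\,u(t_0)(t_0+1)^{\delta_0'}\}$ when $\delta_1<1$, and the invariance argument with $M=\max\{u(0),a_2/a_1\}$ for $\delta_1=\delta_2$ all check out. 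This inductive polynomial-envelope route is more elementary than, but equivalent in substance to, the proof in the cited source, which estimates the explicit products $\prod_{l}(1-r_1(l))$ and the resulting sums in discrete-Gronwall fashion. One wart: nonnegativity of $\{u(t)\}$ cannot be obtained ``by an immediate induction,'' because \eqref{eq:int_res_0} is a one-sided inequality and places no lower bound whatsoever on $u(t+1)$; nonnegativity of the sequence (and $r_2(t)\ge 0$) must be taken as a standing assumption rather than deduced. This is harmless --- the lemma is only ever applied to norms and mean-square errors --- but you should state it as a hypothesis, not a consequence.

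Your diagnosis of the boundary regime $\delta_1=1$ is a genuine catch, not a defect of your proof. As transcribed (allowing $0\le\delta_1\le 1$ with no condition tying $a_1$ to $\delta_2-\delta_1$), the lemma is false: take $r_1(t)=\tfrac{1/2}{t+1}$, $r_2(t)=(t+1)^{-2}$, $u(0)=1$, with equality in \eqref{eq:int_res_0}; then $u(t)\ge u(0)\prod_{k=0}^{t-1}\bigl(1-\tfrac{1/2}{k+1}\bigr)=\Theta(t^{-1/2})$, so $(t+1)^{\delta_0}u(t)\to\infty$ for every $\delta_0\in(1/2,1)$, although the lemma promises convergence to zero for all $\delta_0<\delta_2-\delta_1=1$. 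Your corrected range $\delta_0<\min\{a_1,\delta_2-\delta_1\}$, and the sufficiency of the gap condition $a_1\ge\delta_2-\delta_1$ for the full advertised (open) range, are both exactly right; so is your observation that the paper's one invocation of the lemma with $\delta_1=1$ (Theorem \ref{th1.1}, where $a_1=ac_1$, $\delta_2=2$) is rescued by Assumption \ref{as:6}, which forces $ac_1\ge 1=\delta_2-\delta_1$. The only refinement I would add: at the boundary $ac_1=1$ the solution behaves like $\log t/t$, so while the lemma's open-range conclusion survives, the $O(1/t)$ asserted in Theorem \ref{th1.1} is strictly $O(\log t/t)$ unless $ac_1>1$ --- a looseness in the paper's use of the lemma, not in your proof.
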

					\begin{proof}[Proof of Lemma \ref{main_res_lap}]
						Let $\mathcal{L}$ denote the set of possible Laplacian matrices which is necessarily finite. Since the set of Laplacians is finite, we have,
						{\small\begin{align}
							\label{eq:l2_2}
							\underline{p}=\inf_{\mathbf{L}\in\mathcal{L}}p_{\mathbf{L}} > 0,
							\end{align}}
						with $p_{L}=\mathbb{P}\left(\mathbf{L}(t)=\mathbf{L}\right)$ for each $\mathbf{L}\in\mathcal{L}$ such that $\sum_{\mathbf{L}\in\mathcal{L}}p_{\mathbf{L}}=1$.
						We also have that $\lambda_{2}\left(\overline{\mathbf{L}}\right) > 0$ implies that for every $\mathbf{z}\in\mathcal{C}^{\perp}$, where,
						{\small\begin{align}
							\label{eq:l2_3}
							\mathcal{C} = \left\{\mathbf{x}|\mathbf{x}=\mathbf{1}_{N}\otimes \mathbf{a}, \mathbf{a}\in\mathbb{R}^{N}\right\},
							\end{align}}
						we have,
						{\small\begin{align}
							\label{eq:l2_4}
							\sum_{\mathbf{L}\in\mathcal{L}}\mathbf{z}^{\top}\mathbf{L}\mathbf{z}\geq \sum_{\mathbf{L}\in\mathcal{L}}\mathbf{z}^{\top}p_{\mathbf{L}}\mathbf{L}\mathbf{z}=\mathbf{z}^{\top}\overline{\mathbf{L}}\mathbf{z}\geq \lambda_{2}\left(\overline{\mathbf{L}}\right)\left\|\mathbf{z}\right\|^{2}.
							\end{align}}
						Owing to the finite cardinality of $\mathcal{L}$ and \eqref{eq:l2_4}, we also have that for each $\mathbf{z}\in\mathcal{C}^{\perp}$,$\exists \mathbf{L}_{\mathbf{z}}\in\mathcal{L}$ such that,
						{\small\begin{align}
							\label{eq:l2_41}
							\mathbf{z}^{\top}\mathbf{L}_{\mathbf{z}}\mathbf{z}\ge \frac{\lambda_{2}\left(\overline{\mathbf{L}}\right)}{|\mathcal{L}_{t}|}\left\|\mathbf{z}\right\|^{2}
							\end{align}}
						Moreover, since $\mathcal{L}$ is finite, the mapping $L_{\mathbf{z}}:\mathcal{C}^{\perp}\mapsto\mathcal{L}$ can be realized as a measurable function. For each, $\mathbf{L}\in\mathcal{L}$, the eigen values of $\mathbf{I}_{N^{2}}-\beta_{t}\left(\mathbf{L}\otimes\mathbf{I}_{N}\right)$ are given by $N$ repetitions of $1$ and $1-\beta_{t}\lambda_{n}\left(\mathbf{L}\right)$, where $2\leq n\leq N$.
						Thus, for $t\geq t_{0}$, $\left\|\mathbf{I}_{N^{2}}-\beta_{t}\left(\mathbf{L}\otimes\mathbf{I}_{N}\right)\right\| \leq 1$ and $\left\|\left(\mathbf{I}_{N^{2}}-\beta_{t}\left(\mathbf{L}\otimes\mathbf{I}_{N}\right)\right)\mathbf{z}\right\| \leq \left\|\mathbf{z}\right\|$. Hence, we can define a jointly measurable function $r_{\mathbf{L},\mathbf{z}}$ given by,
						{\small\begin{align}
							\label{l2_5}
							r_{\mathbf{L},\mathbf{z}} =
							\begin{cases}
							1 &~~\textit{if}~t<t_{0}~\textit{or}~\mathbf{z}=\mathbf{0}\\
							1-\frac{\left\|\left(\mathbf{I}_{NM}-\beta_{t}\left(\mathbf{L}\otimes\mathbf{I}_{M}\right)\right)\mathbf{z}\right\|}{\left\|\mathbf{z}\right\|} & ~~\textit{otherwise},
							\end{cases}
							\end{align}}
						which satisfies $0\le r_{\mathbf{L},\mathbf{z}} \le 1$ for each $\left(\mathbf{L},\mathbf{z}\right)$.
						\noindent Define $\{r_{t}\}$ to be a $\mathcal{F}_{t+1}$ process given by, $r_{t} = r_{\mathbf{L},\mathbf{z}_{t}}$ for each $t$ and $\left\|\left(\mathbf{I}_{N^2}-\beta_{t}\left(\mathbf{L}\otimes\mathbf{I}_{N}\right)\right)\mathbf{z}_{t}\right\|=(1-r_{t})\left\|\mathbf{z}_{t}\right\|$ a.s. for each $t$.
						\noindent Then, we have,
						{\small\begin{align}
							\label{eq:l2_6}
							&\left\|\left(\mathbf{I}_{N^2}-\beta_{t}\left(\mathbf{L}_{\mathbf{z}_{t}}\otimes\mathbf{I}_{N}\right)\right)\mathbf{z}_{t}\right\|^{2}\nonumber\\&=\mathbf{z}^{\top}_{t}\left(\mathbf{I}_{N^2}-2\beta_{t}\left(\mathbf{L}_{\mathbf{z}_{t}}\otimes\mathbf{I}_{N}\right)\right)\mathbf{z}_{t}\nonumber\\
							&+\mathbf{z}_{t}^{\top}\beta_{t}^2\left(\mathbf{L}_{\mathbf{z}_{t}}\otimes\mathbf{I}_{N}\right)^{2}\mathbf{z}_{t}\nonumber\\
							&\leq \left(1-2\beta_{t}\frac{\lambda_{2}\left(\overline{\mathbf{L}}\right)}{|\mathcal{L}|}\right)\left\|\mathbf{z}_{t}\right\|^{2}+c_{1}\beta_{t}^{2}\left\|\mathbf{z}_{t}\right\|^{2}\nonumber\\
							&\leq \left(1-\beta_{t}\frac{\lambda_{2}\left(\overline{\mathbf{L}}\right)}{|\mathcal{L}|}\right)\left\|\mathbf{z}_{t}\right\|^{2}
							\end{align}}
						where we have used the boundedness of the Laplacian matrix.
						\noindent With the above development in place, choosing an appropriate $t_{1}$~(making $t_{0}$ larger if necessary), for all $t\geq t_{1}$, we have,
						{\small\begin{align}
							\label{eq:l2_7}
							\left\|\left(\mathbf{I}_{N^2}-\beta_{t}\left(\mathbf{L}_{\mathbf{z}_{t}}\otimes\mathbf{I}_{N}\right)\right)\mathbf{z}_{t}\right\|\le \left(1-\beta_{t}\frac{\lambda_{2}\left(\overline{\mathbf{L}}\right)}{4|\mathcal{L}|}\right)\left\|\mathbf{z}_{t}\right\|^{2}.
							\end{align}}
						Then, from \eqref{eq:l2_7}, we have,
						{\small\begin{align}
							\label{eq:l2_8}
							&\mathbb{E}\left.\left[\left\|\left(\mathbf{I}_{N^2}-\beta_{t}\left(\mathbf{L}_{\mathbf{z}_{t}}\otimes\mathbf{I}_{N}\right)\right)\mathbf{z}_{t}\right\|\right|\mathcal{F}_{t}\right]\nonumber\\&=\sum_{\mathbf{L}\in\mathcal{L}}p_{\mathbf{L}}\left(1-r_{\mathbf{L},\mathbf{z}_{t}}\right)\left\|\mathbf{z}_{t}\right\|\nonumber\\
							&\le \left(1-\left(\underline{p}\beta_{t}\frac{\lambda_{2}\left(\overline{\mathbf{L}}\right)}{4|\mathcal{L}|}+\sum_{\mathbf{L}\neq\mathbf{L}_{\mathbf{z}_{t}}}\right)\right)\left\|\mathbf{z}_{t}\right\|.
							\end{align}}
						Since, $\sum_{\mathbf{L}\neq\mathbf{L}_{\mathbf{z}_{t}}} p_{\mathbf{L}}r_{\mathbf{L},\mathbf{z}_{t}}\geq 0$, we have for all $t\ge t_1$,
						{\small\begin{align}
							\label{eq:l2_9}
							&\left(1-\mathbb{E}\left[r_{t}|\mathcal{F}_{t}\right]\right)\left\|\mathbf{z}_{t}\right\| \nonumber\\&= \mathbb{E}\left.\left[\left\|\left(\mathbf{I}_{N^2}-\beta_{t}\left(\mathbf{L}_{\mathbf{z}_{t}}\otimes\mathbf{I}_{N}\right)\right)\mathbf{z}_{t}\right\|\right|\mathcal{F}_{t}\right]\nonumber\\
							&\le \left(1-\underline{p}\beta_{t}\frac{\lambda_{2}\left(\overline{\mathbf{L}}\right)}{4|\mathcal{L}|}\right)\left\|\mathbf{z}_{t}\right\|.
							\end{align}}
						\noindent As $r_{t}=1$ on the set $\{\mathbf{z}_{t}=0\}$, we have that,
						{\small\begin{align}
							\label{eq:l2_102}
							\mathbb{E}\left[r_{t}|\mathcal{F}_{t}\right]\geq \underline{p}\beta_{t}\frac{\lambda_{2}\left(\overline{\mathbf{L}}\right)}{4|\mathcal{L}|}.
							\end{align}}
						Thus, we have established that,
						{\small\begin{align}
							\label{eq:l2_112}
							\left\|\left(\mathbf{I}_{N^2}-\left(\mathbf{L}(t)\otimes\mathbf{I}_{N}\right)\right)\mathbf{z}_{t}\right\|\le (1-r_{t})\left\|\mathbf{z}_{t}\right\|,
							\end{align}}
						where $\{r_t\}$ is a $\mathbb{R}^{+}$ valued $\mathcal{F}_{t+1}$ process satisfying \eqref{eq:l2_102}.
					\end{proof}
					\noindent With the above development in place, consider the residual process $\{\mathbf{x}^{\dagger}(t)\}$ given by $\mathbf{x}^{\dagger}(i,t) = \widetilde{\mathbf{x}}(i,t)-\mathbf{1}_{Q_i}\otimes\widetilde{\mathbf{x}}_{\mbox{avg,i}}(t)$, where $i$ denotes the $i$-th entry of $\btheta^{\ast}$ and $\mathbf{x}^{\dagger}(t)=\left[\mathbf{x}^{\dagger}(1,t),\cdots,\mathbf{x}^{\dagger}(N,t)\right]^{\top}$. Thus, we have that the process $\{\mathbf{x}^{\dagger}(i,t)\}$ satisfies the recursion,
					{\small\begin{align}
						\label{eq:l2_12}
						\mathbf{x}^{\dagger}(i,t+1) = \left(\mathbf{I}_{Q_{i}}-\mathbf{L}_{\mathcal{P},i}(t)\right)\mathbf{x}^{\dagger}(i,t) +\alpha_{t}\widetilde{\mathbf{z}}(i,t),
						\end{align}}
					where the process $\{\widetilde{\mathbf{z}}(i,t)\}$ is given by
					{\small\begin{align}
						\label{eq:l2_13}
						\widetilde{\mathbf{z}}(i,t)=\left(\mathbf{I}_{Q_{i}}-\frac{1}{Q_i}\mathbf{1}_{Q_i}\mathbf{1}_{Q_i}^{\top}\right)\times\mathcal{S}_{i}\mathcal{P}\mathbf{G}_{H}\mathbf{R}^{-1}\left(\mathbf{y}(t)-\mathbf{G}_{H}^{\top}\mathcal{P}\widetilde{\mathbf{x}}(t)\right).
						\end{align}}
					\noindent From \eqref{eq:l2_13}, we also have,
					{\small\begin{align}
						\label{eq:l2_14}
						\widetilde{\mathbf{z}}(i,t)=\overline{\mathbf{J}}_{i,t}+\overline{\mathbf{U}}_{i,t},
						\end{align}}
					where,
					{\small\begin{align}
						\label{eq:l2_141}
						&\overline{\mathbf{J}}_{i,t}=\left(\mathbf{I}_{Q_{i}}-\frac{1}{Q_i}\mathbf{1}_{Q_i}\mathbf{1}_{Q_i}^{\top}\right)\nonumber\\&\times\mathcal{S}_{i}\mathcal{P}\mathbf{G}_{H}\mathbf{R}^{-1}\left(\mathbf{y}(t)-\mathbf{G}_{H}^{\top}\mathcal{P}\left(\mathbf{1}_{N}\otimes\btheta^{\ast}\right)\right)\nonumber\\
						&\overline{\mathbf{U}}_{t}=\left(\mathbf{I}_{Q_{i}}-\frac{1}{Q_i}\mathbf{1}_{Q_i}\mathbf{1}_{Q_i}^{\top}\right)\nonumber\\&\times\mathcal{S}_{i}\mathcal{P}\mathbf{G}_{H}\mathbf{R}^{-1}\left(\mathbf{G}_{H}^{\top}\mathcal{P}\left(\mathbf{1}_{N}\otimes\btheta^{\ast}\right)-\mathbf{G}_{H}^{\top}\mathcal{P}\widetilde{\mathbf{x}}(t)\right).
						\end{align}}
					By Theorem \ref{th1}, we also have that, the process $\{\widetilde{\mathbf{x}}(i,t)\}$ is bounded. Hence, there exists an $\mathcal{F}_{t}$-adapted process $\{\widetilde{U}_{i,t}\}$ such that $\left\|\overline{\mathbf{U}}_{i,t}\right\|\le \widetilde{U}_{i,t}$ and $\sup_{t\ge 0}\widetilde{U}_{i,t} < \infty$ a.s.. Furthermore, denote the process $U_{i,t}$ as follows,
					{\small\begin{align}
						\label{eq:l2_15}
						U_{i,t} = \max\left\{\widetilde{U}_{i,t}, \left\|\mathbf{I}_{Q_{i}}-\frac{1}{Q_i}\mathbf{1}_{Q_i}\mathbf{1}_{Q_i}^{\top}\right\|\right\}.
						\end{align}}
					With the above development in place, we conclude,
					{\small\begin{align}
						\label{eq:l2_15.1}
						\left\|\overline{\mathbf{U}}_{i,t}\right\|+\left\|\overline{\mathbf{J}}_{i,t}\right\| \le U_{i,t}\left(1+J_{i,t}\right),
						\end{align}}
					where $J_{i,t} = \left\|\mathbf{y}(t)-\mathbf{G}_{H}^{\top}\mathcal{P}\left(\mathbf{1}_{N}\otimes\btheta^{\ast}\right)\right\|$ and $\mathbb{E}_{\btheta}\left[J_{i,t}^{2+\epsilon}\right] < \infty$.
					Then, from \eqref{eq:l2_11}-\eqref{eq:l2_12} we have,
					{\small\begin{align}
						\label{eq:l2_16}
						\left\|\mathbf{x}^{\dagger}(i,t+1)\right\| \leq (1-r_{t})\left\|\mathbf{x}^{\dagger}(i,t) \right\|+\alpha_{t}U_{i,t}(1+J_{i,t}),
						\end{align}}
					which then falls under the purview of Lemma \ref{le:l1.1} and hence we have the assertion,
					{\small\begin{align}
						\label{eq:l2_17}
						\mathbb{P}\left(\lim_{t\to\infty}(t+1)^{\delta_0}\left(\widetilde{\mathbf{x}}(i,t)-\mathbf{1}_{Q_i}\otimes\widetilde{\mathbf{x}}_{\mbox{avg,i}}(t)\right)=0\right)=1,
						\end{align}}
					where $0<\delta_{0}<1-\tau_{1}$ and hence $\delta_{0}$ can be chosen to be $1/2 + \delta$, where $\delta > 0$ and we finally have,
					{\small\begin{align}
						\label{eq:l2_18}
						\mathbb{P}\left(\lim_{t\to\infty}(t+1)^{\frac{1}{2}+\delta}\left(\widetilde{\mathbf{x}}(t)-\mathbf{1}_{N}\otimes\widetilde{\mathbf{x}}_{\mbox{\scriptsize{avg}}}(t)\right)=0\right)=1,
						\end{align}}
					
					as the above analysis can be repeated each entry $i$ of the parameter of interest $\btheta^{\ast}$.
					
					The proof of Theorem \ref{th2} needs the following Lemma from \cite{Fabian-2} concerning the asymptotic normality of the stochastic recursions.
					\begin{Lemma}[Theorem 2.2 in \cite{Fabian-2}]
						\label{main_res_l0}
						\noindent Let $\{\mathbf{z}_{t}\}$ be an $\mathbb{R}^{k}$-valued $\{\mathcal{F}_{t}\}$-adapted process that satisfies
						{\small\begin{align}
							\label{eq:l0_1}
							&\mathbf{z}_{t+1}=\left(\mathbf{I}_{k}-\frac{1}{t+1}\Gamma_{t}\right)\mathbf{z}_{t}+(t+1)^{-1}\mathbf{\Phi}_{t}\mathbf{V}_{t}\nonumber\\&+(t+1)^{-3/2}\mathbf{T}_{t},
							\end{align}}
						\noindent where the stochastic processes $\{\mathbf{V}_{t}\}, \{\mathbf{T}_{t}\} \in \mathbb{R}^{k}$ while $\{\mathbf{\Gamma}_{t}\}, \{\mathbf{\Phi}_{t}\} \in \mathbb{R}^{k\times k}$. Moreover, suppose for each $t$, $\mathbf{V}_{t-1}$ and $\mathbf{T}_{t}$ are $\mathcal{F}_{t}$-adapted, whereas the processes $\{\mathbf{\Gamma}_{t}\}$, $\{\mathbf{\Phi}_{t}\}$ are $\{\mathcal{F}_{t}\}$-adapted.
						
						\noindent Also, assume that
						{\small\begin{align}
							\label{eq:l0_2}
							\mathbf{\Gamma}_{t}\to\mathbf{\Gamma}, \mathbf{\Phi}_{t}\to\mathbf{\Phi},~ \textit{and} ~\mathbf{T}_{t}\to 0 ~~\mbox{a.s. as $t\rightarrow\infty$},
							\end{align}}
						\noindent where $\mathbf{\Gamma}$ is a symmetric and positive definite matrix, and admits an eigen decomposition of the form $\mathbf{P}^{\top}\mathbf{\Gamma}\mathbf{P}=\mathbf{\Lambda}$, where $\mathbf{\Lambda}$ is a diagonal matrix and $\mathbf{P}$ is an orthogonal matrix. Furthermore, let the sequence $\{\mathbf{V}_{t}\}$ satisfy $\mathbb{E}\left[\mathbf{V}_{t}|\mathcal{F}_{t}\right]=0$ for each $t$ and suppose there exists a positive constant $C$ and a matrix $\Sigma$ such that $C > \left\|\mathbb{E}\left[\mathbf{V}_{t}\mathbf{V}_{t}^{\top}|\mathcal{F}_{t}\right]-\Sigma\right\|\to 0~a.s. ~\textit{as}~ t\to\infty$ and with $\sigma_{t,r}^{2}=\int_{\left\|\mathbf{V}_{t}\right\|^{2} \ge r(t+1)}\left\|\mathbf{V}_{t}\right\|^{2}d\mathbb{P}$, let $\lim_{t\to\infty}\frac{1}{t+1}\sum_{s=0}^{t}\sigma_{s,r}^{2}=0$ for every $r > 0$. Then, we have,
						{\small\begin{align}
							\label{eq:l0_3}
							(t+1)^{1/2}\mathbf{z}_{t}\overset{\mathcal{D}}{\Longrightarrow}\mathcal{N}\left(\mathbf{0}, \mathbf{P}\mathbf{M}\mathbf{P}^{\top}\right),
							\end{align}}
						\noindent where the $(i,j)$-th entry of the matrix $\mathbf{M}$ is given by
						{\small\begin{align}
							\label{eq:l0_4}
							\left[\mathbf{M}\right]_{ij}=\left[\mathbf{P}^{\top}\mathbf{\Phi}\mathbf{\Sigma}\mathbf{\Phi}^{\top}\mathbf{P}\right]_{ij}\left(\left[\mathbf{\Lambda}\right]_{ii}+\left[\mathbf{\Lambda}\right]_{jj}-1\right)^{-1}.
							\end{align}}
					\end{Lemma}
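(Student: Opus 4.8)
The plan is to follow the classical route of diagonalizing the drift and then applying a martingale central limit theorem to the variation-of-constants representation. First I would premultiply the recursion by $\mathbf{P}^{\top}$ and set $\mathbf{w}_{t}=\mathbf{P}^{\top}\mathbf{z}_{t}$. Since $\mathbf{P}^{\top}\mathbf{\Gamma}_{t}\mathbf{P}\to\mathbf{\Lambda}$ is diagonal, $\mathbf{P}^{\top}\mathbf{\Phi}_{t}\to\mathbf{P}^{\top}\mathbf{\Phi}$, and $\mathbf{P}^{\top}\mathbf{T}_{t}\to0$, while $\mathbf{V}_{t}$ (a martingale difference with conditional covariance $\mathbf{\Sigma}$) is unchanged, the transformed process $\{\mathbf{w}_{t}\}$ obeys a recursion of exactly the same form but with the limiting drift matrix diagonal. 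Proving $(t+1)^{1/2}\mathbf{w}_{t}\overset{\mathcal{D}}{\Longrightarrow}\mathcal{N}(\mathbf{0},\mathbf{M})$ and transforming back via $\mathbf{z}_{t}=\mathbf{P}\mathbf{w}_{t}$ then yields the covariance $\mathbf{P}\mathbf{M}\mathbf{P}^{\top}$. I would record at the outset that $\mathbf{M}$ is well defined precisely because each eigenvalue satisfies $[\mathbf{\Lambda}]_{ii}>1/2$ (forced by positivity of $[\mathbf{M}]_{ii}$), a fact used repeatedly below.

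\textbf{Variation of constants and decay of nuisance terms.} Introducing the state-transition matrix $\mathbf{B}(t,s)=\prod_{j=s}^{t-1}(\mathbf{I}-\tfrac{1}{j+1}\mathbf{P}^{\top}\mathbf{\Gamma}_{j}\mathbf{P})$, I would expand $\mathbf{w}_{t}$ as the sum of a homogeneous term $\mathbf{B}(t,t_{0})\mathbf{w}_{t_{0}}$, a martingale term $\sum_{s}\mathbf{B}(t,s+1)(s+1)^{-1}\mathbf{P}^{\top}\mathbf{\Phi}_{s}\mathbf{V}_{s}$, and a deterministic-forcing term $\sum_{s}\mathbf{B}(t,s+1)(s+1)^{-3/2}\mathbf{P}^{\top}\mathbf{T}_{s}$. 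The key estimate is that, entrywise, $\mathbf{B}(t,s)_{ii}\sim(s/t)^{[\mathbf{\Lambda}]_{ii}}$ with off-diagonal entries of lower order, which follows from $\log(1-\gamma/(j+1))=-\gamma/(j+1)+O(j^{-2})$ and $\sum_{j=s}^{t}(j+1)^{-1}=\log(t/s)+o(1)$ after controlling the vanishing perturbation $\mathbf{P}^{\top}\mathbf{\Gamma}_{j}\mathbf{P}-\mathbf{\Lambda}$. Using $[\mathbf{\Lambda}]_{ii}>1/2$, the $i$-th component of $(t+1)^{1/2}$ times the homogeneous term decays like $t^{1/2-[\mathbf{\Lambda}]_{ii}}\to0$, and the forcing term is $o(1)$ by a Toeplitz/Ces\`aro argument exploiting $\mathbf{T}_{s}\to0$ together with the faster $(s+1)^{-3/2}$ rate.

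\textbf{Martingale central limit theorem for the noise term.} The surviving term is $\mathbf{N}_{t}=(t+1)^{1/2}\sum_{s=t_{0}}^{t-1}\mathbf{B}(t,s+1)(s+1)^{-1}\mathbf{P}^{\top}\mathbf{\Phi}_{s}\mathbf{V}_{s}$, a sum of martingale differences, to which I would apply the Lindeberg--Feller martingale CLT. For the conditional-covariance condition I would evaluate $(t+1)\sum_{s}(s+1)^{-2}\mathbf{B}(t,s+1)\mathbf{P}^{\top}\mathbf{\Phi}_{s}\mathbf{\Sigma}\mathbf{\Phi}_{s}^{\top}\mathbf{P}\,\mathbf{B}(t,s+1)^{\top}$; substituting $\mathbf{B}(t,s+1)_{ii}\sim((s+1)/t)^{[\mathbf{\Lambda}]_{ii}}$ and $\mathbf{P}^{\top}\mathbf{\Phi}_{s}\mathbf{\Sigma}\mathbf{\Phi}_{s}^{\top}\mathbf{P}\to\mathbf{P}^{\top}\mathbf{\Phi}\mathbf{\Sigma}\mathbf{\Phi}^{\top}\mathbf{P}$, the $(i,j)$ entry reduces to $[\mathbf{P}^{\top}\mathbf{\Phi}\mathbf{\Sigma}\mathbf{\Phi}^{\top}\mathbf{P}]_{ij}\lim_{t}t^{1-[\mathbf{\Lambda}]_{ii}-[\mathbf{\Lambda}]_{jj}}\sum_{s}(s+1)^{[\mathbf{\Lambda}]_{ii}+[\mathbf{\Lambda}]_{jj}-2}$, and since $[\mathbf{\Lambda}]_{ii}+[\mathbf{\Lambda}]_{jj}>1$ the sum is $\sim t^{[\mathbf{\Lambda}]_{ii}+[\mathbf{\Lambda}]_{jj}-1}/([\mathbf{\Lambda}]_{ii}+[\mathbf{\Lambda}]_{jj}-1)$, producing exactly $[\mathbf{M}]_{ij}=[\mathbf{P}^{\top}\mathbf{\Phi}\mathbf{\Sigma}\mathbf{\Phi}^{\top}\mathbf{P}]_{ij}([\mathbf{\Lambda}]_{ii}+[\mathbf{\Lambda}]_{jj}-1)^{-1}$. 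The conditional Lindeberg condition I would derive from the hypothesis $\tfrac{1}{t+1}\sum_{s=0}^{t}\sigma_{s,r}^{2}\to0$ after absorbing the uniformly bounded weights $(t+1)^{1/2}\mathbf{B}(t,s+1)(s+1)^{-1}$. Slutsky's theorem then combines the vanishing homogeneous and forcing terms with $\mathbf{N}_{t}\overset{\mathcal{D}}{\Longrightarrow}\mathcal{N}(\mathbf{0},\mathbf{M})$, and since $\sqrt{(t+1)/t}\to1$ the same limit holds under the $\sqrt{t+1}$ scaling.

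\textbf{Main obstacle.} The delicate part is the uniform control of the state-transition matrix $\mathbf{B}(t,s)$: because $\mathbf{P}^{\top}\mathbf{\Gamma}_{j}\mathbf{P}$ equals $\mathbf{\Lambda}$ only in the limit, I must show that the accumulated perturbations neither destroy the leading power-law asymptotics of the diagonal entries nor allow the off-diagonal entries to grow, uniformly over the full range $t_{0}\le s\le t$. This demands a product/Gronwall-type estimate rather than a term-by-term bound, since $\sum_{j}j^{-1}\|\mathbf{P}^{\top}\mathbf{\Gamma}_{j}\mathbf{P}-\mathbf{\Lambda}\|$ need not converge. Every downstream step---the decay of the homogeneous and forcing terms and the exact evaluation of the limiting covariance---rests on this estimate.
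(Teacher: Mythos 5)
You should know at the outset that the paper contains no proof of this statement: it is imported verbatim as Theorem~2.2 of Fabian's 1968 paper \cite{Fabian-2} and used as a black box, so there is no internal argument to compare against. Judged on its own terms, your sketch is a faithful reconstruction of Fabian's classical argument: diagonalize via $\mathbf{P}$, write the variation-of-constants representation with transition products $\mathbf{B}(t,s)$, kill the homogeneous and $(s+1)^{-3/2}\mathbf{T}_{s}$ terms using $(s/t)^{[\mathbf{\Lambda}]_{ii}}$-type product asymptotics, and apply a Lindeberg martingale CLT to the noise convolution. Your covariance computation is exactly right: the $(i,j)$ entry telescopes to $[\mathbf{P}^{\top}\mathbf{\Phi}\mathbf{\Sigma}\mathbf{\Phi}^{\top}\mathbf{P}]_{ij}\left([\mathbf{\Lambda}]_{ii}+[\mathbf{\Lambda}]_{jj}-1\right)^{-1}$. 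The ``main obstacle'' you flag --- uniform two-sided control of $\mathbf{B}(t,s)$ under the merely convergent perturbation $\mathbf{P}^{\top}\mathbf{\Gamma}_{j}\mathbf{P}-\mathbf{\Lambda}$ --- is indeed the crux, and the standard resolution is the one you hint at: for each $\varepsilon>0$, beyond a (random) time at which $\left\|\mathbf{P}^{\top}\mathbf{\Gamma}_{j}\mathbf{P}-\mathbf{\Lambda}\right\|\leq\varepsilon$, sandwich the products between $(s/t)^{[\mathbf{\Lambda}]_{ii}+c\varepsilon}$ and $(s/t)^{[\mathbf{\Lambda}]_{ii}-c\varepsilon}$ and let $\varepsilon\to 0$ at the end; no summability of the perturbation is required.

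Two caveats. First, the lemma as restated in the paper omits a hypothesis your proof genuinely needs: positive definiteness of $\mathbf{\Gamma}$ alone does not give $[\mathbf{\Lambda}]_{ii}>1/2$, whereas Fabian's original theorem assumes (for this scaling) $2\lambda_{\min}(\mathbf{\Gamma})>1$. You noticed the condition, but you should state it as a standing hypothesis rather than deriving it ``from positivity of $[\mathbf{M}]_{ii}$'' --- that reads the conclusion back into the assumptions; without it, the $\sqrt{t+1}$ normalization itself is wrong, not merely the formula for $\mathbf{M}$. (In the paper's application the condition is secured by Assumption A6, which forces $a\lambda_{\min}(\cdot)\geq 1$.) Second, and more substantively: since $\{\mathbf{\Gamma}_{j}\}$ is a random $\{\mathcal{F}_{t}\}$-adapted sequence, the weight $\mathbf{B}(t,s+1)$ in your noise term depends on $\mathbf{\Gamma}_{j}$ for $j>s$, so the summands $\mathbf{B}(t,s+1)(s+1)^{-1}\mathbf{P}^{\top}\mathbf{\Phi}_{s}\mathbf{V}_{s}$ are \emph{not} a martingale difference array, and the Lindeberg--Feller martingale CLT does not apply to them directly. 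The standard repair, which Fabian in effect carries out, is a preliminary freezing step: replace $\mathbf{\Gamma}_{j}$ by $\mathbf{\Gamma}$ and $\mathbf{\Phi}_{s}$ by $\mathbf{\Phi}$, and show the replacement error in $\sqrt{t+1}\,\mathbf{z}_{t}$ is $o_{P}(1)$, e.g.\ by restricting to the high-probability event $\sup_{j\geq m}\left\|\mathbf{\Gamma}_{j}-\mathbf{\Gamma}\right\|\leq\varepsilon$. Only after this reduction are the weights deterministic and your CLT step legitimate. Your Gronwall-type estimate controls magnitudes but not this measurability issue, so the freezing reduction must be added as an explicit step; with it, the sketch is sound.
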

					Multiplying the selection matrix, we have,
					{\small\begin{align}
						\label{eq:t2_1}
						&\widetilde{\mathbf{x}}(i,t+1)=\widetilde{\mathbf{x}}(i,t)-\mathbf{L}_{\mathcal{P},i}(t)\widetilde{\mathbf{x}}(i,t)+\alpha_{t}\mathcal{S}_{i}\mathcal{P}\mathbf{G}_{H}\mathbf{R}^{-1}\nonumber\\&\times\left(\mathbf{y}(t)-\mathbf{G}_{H}^{\top}\mathcal{P}\widetilde{\mathbf{x}}(t)\right)\nonumber\\
						&\Rightarrow \frac{\mathbf{1}_{Q_{i}}^{\top}}{Q_{i}}\widetilde{\mathbf{x}}(i,t+1) = \frac{\mathbf{1}_{Q_{i}}^{\top}}{Q_{i}}\widetilde{\mathbf{x}}(i,t)-\frac{\mathbf{1}_{Q_{i}}^{\top}}{Q_{i}}\mathbf{L}_{\mathcal{P},i}(t)\widetilde{\mathbf{x}}(i,t)\nonumber\\&+\alpha_{t}\frac{\mathbf{1}_{Q_{i}}^{\top}}{Q_{i}}\mathcal{S}_{i}\mathcal{P}\mathbf{G}_{H}\mathbf{R}^{-1}\left(\mathbf{y}(t)-\mathbf{G}_{H}^{\top}\mathcal{P}\widetilde{\mathbf{x}}(t)\right)\nonumber\\
						&\Rightarrow \widetilde{\mathbf{x}}_{\mbox{\scriptsize{avg}},i}(t+1)=\widetilde{\mathbf{x}}_{\mbox{\scriptsize{avg}},i}(t)+\alpha_{t}\frac{\mathbf{1}_{Q_{i}}^{\top}}{Q_{i}}\mathcal{S}_{i}\mathcal{P}\mathbf{G}_{H}\mathbf{R}^{-1}\nonumber\\&\times\left(\mathbf{y}(t)-\mathbf{G}_{H}^{\top}\mathcal{P}\widetilde{\mathbf{x}}(t)\right),
						\end{align}}
					where $\{\widetilde{\mathbf{x}}_{\mbox{\scriptsize{avg}},i}(t)\}$ is the averaged estimate sequence for the $i$-th entry of the parameter $\btheta^{\ast}$. Stacking, all such averages together we have,
					{\small\begin{align}
						\label{eq:t2_2}
						&\widetilde{\mathbf{x}}_{\mbox{\scriptsize{avg}}}(t+1)=\widetilde{\mathbf{x}}_{\mbox{\scriptsize{avg}}}(t)+\alpha_{t}\mathcal{S}_{\mbox{\scriptsize{avg}}}\mathcal{P}\mathbf{G}_{H}\mathbf{R}^{-1}\left(\mathbf{y}(t)-\mathbf{G}_{H}^{\top}\mathcal{P}\widetilde{\mathbf{x}}(t)\right)\nonumber\\
						&\Rightarrow \widetilde{\mathbf{x}}_{\mbox{\scriptsize{avg}}}(t+1) - \btheta^{\ast} = \left(\mathbf{I}-\alpha_{t}\mathbf{Q}\sum_{n=1}^{N}\mathcal{P}_{\mathcal{I}_n}\mathbf{H}_{n}^{\top}\mathbf{R}^{-1}\mathbf{H}_{n}\mathcal{P}_{\mathcal{I}_n}\right)\nonumber\\&\times\left(\widetilde{\mathbf{x}}_{\mbox{\scriptsize{avg}}}(t)-\btheta^{\ast}\right)\nonumber\\
						&+\alpha_{t}\mathcal{S}_{\mbox{\scriptsize{avg}}}\mathcal{P}\mathbf{G}_{H}\mathbf{R}^{-1}\mathbf{\gamma}(t)\nonumber\\&+\alpha_{t}\mathbf{Q}\sum_{n=1}^{N}\mathcal{P}_{\mathcal{I}_n}\mathbf{H}_{n}^{\top}\mathbf{R}_{n}^{-1}\mathbf{H}_{n}\left(\widetilde{\mathbf{x}}_{n}(t)-\mathcal{P}_{\mathcal{I}_n}\widetilde{\mathbf{x}}_{\mbox{\scriptsize{avg}}}(t)\right),
						\end{align}}
					where {\small$\mathcal{S}_{\mbox{\scriptsize{avg}}}=\left[\frac{\mathbf{1}_{Q_{1}}^{\top}}{Q_{1}}\mathcal{S}_{1}, \frac{\mathbf{1}_{Q_{2}}^{\top}}{Q_{2}}\mathcal{S}_{2},\cdots,\frac{\mathbf{1}_{Q_{N}}^{\top}}{Q_{N}}\mathcal{S}_{N}\right]$ and $\mathbf{Q} = \textit{diag}\left[\frac{1}{Q_{1}},\frac{1}{Q_{2}},\cdots,\frac{1}{Q_{N}}\right]$.} In the above derivation, we make use of the fact that {\small$\mathcal{S}_{\mbox{\scriptsize{avg}}}\mathcal{P}\mathbf{G}_{H}\mathbf{R}^{-1}\mathbf{G}_{H}^{\top}\mathcal{P}\mathbf{1}_{N}\otimes\left(\widetilde{\mathbf{x}}_{\mbox{\scriptsize{avg}}}(t)-\btheta^{\ast}\right)=\mathbf{Q}\sum_{n=1}^{N}\mathcal{P}_{n}\mathbf{H}_{n}^{\top}\mathbf{R}^{-1}\mathbf{H}_{n}\left(\widetilde{\mathbf{x}}_{\mbox{\scriptsize{avg}}}(t)-\btheta^{\ast}\right)$}, which in turn follows from the fact that,
					{\small\begin{align}
						\label{eq:savg_explain}
						&\mathcal{S}_{\mbox{\scriptsize{avg}}} = \mathbf{Q}\left[\mathcal{P}_{\mathcal{I}_1}~\mathcal{P}_{\mathcal{I}_2}\cdots\mathcal{P}_{\mathcal{I}_N}\right]=\left[\mathbf{Q}\mathcal{P}_{\mathcal{I}_1}~\mathbf{Q}\mathcal{P}_{\mathcal{I}_2}\cdots\mathbf{Q}\mathcal{P}_{\mathcal{I}_N}\right]\nonumber\\
						&\Rightarrow \mathcal{S}_{\mbox{\scriptsize{avg}}}\mathcal{P}\mathbf{G}_{H}\mathbf{R}^{-1}\mathbf{G}_{H}^{\top}\mathcal{P}\mathbf{1}_{N}\otimes\left(\widetilde{\mathbf{x}}_{\mbox{\scriptsize{avg}}}(t)-\btheta^{\ast}\right) \nonumber\\&= \left[\mathbf{Q}\mathcal{P}_{\mathcal{I}_1}~\mathbf{Q}\mathcal{P}_{\mathcal{I}_2}\cdots\mathbf{Q}\mathcal{P}_{\mathcal{I}_N}\right]\nonumber\\&\times\left[\mathcal{P}_{\mathcal{I}_1}\mathbf{H}_{1}^{\top}\mathbf{R}_{1}^{-1}\mathbf{H}_{1}\left(\widetilde{\mathbf{x}}_{\mbox{\scriptsize{avg}}}(t)-\btheta^{\ast}\right)\right.\cdots\nonumber\\&\left.\mathcal{P}_{\mathcal{I}_N}\mathbf{H}_{N}^{\top}\mathbf{R}_{N}^{-1}\mathbf{H}_{N}\left(\widetilde{\mathbf{x}}_{\mbox{\scriptsize{avg}}}(t)-\btheta^{\ast}\right)\right]^{\top}\nonumber\\
						&=\mathbf{Q}\sum_{n=1}^{N}\mathcal{P}_{\mathcal{I}_n}\mathbf{H}_{n}^{\top}\mathbf{R}^{-1}\mathbf{H}_{n}\left(\widetilde{\mathbf{x}}_{\mbox{\scriptsize{avg}}}(t)-\btheta^{\ast}\right).
						\end{align}}
					
					\noindent Define, the residual sequence, $\{\mathbf{z}_{t}\}$, where $\mathbf{z}(t)=\widetilde{\mathbf{x}}_{\mbox{\scriptsize{avg}}}(t)-\btheta^{\ast}$, which can be then shown to satisfy the recursion
					{\small\begin{align}
						\label{eq:l1_2}
						\mathbf{z}_{t+1}=\left(\mathbf{I}_{N}-\alpha_{t}\Gamma\right)\mathbf{z}_{t}+\alpha_{t}\mathbf{U}_{t}+\alpha_{t}\mathbf{J}_{t},
						\end{align}}
					where
					{\small\begin{align}
						\label{eq:l1_3}
						&\Gamma = \mathbf{Q}\sum_{n=1}^{N}\mathcal{P}_{\mathcal{I}_n}\mathbf{H}_{n}^{\top}\mathbf{R}^{-1}\mathbf{H}_{n}\mathcal{P}_{\mathcal{I}_n}\nonumber\\
						&\mathbf{U}_{t}=\mathbf{Q}\sum_{n=1}^{N}\mathcal{P}_{\mathcal{I}_n}\mathbf{H}_{n}^{\top}\mathbf{R}_{n}^{-1}\mathbf{H}_{n}\left(\widetilde{\mathbf{x}}_{n}(t)-\mathcal{P}_{\mathcal{I}_n}\widetilde{\mathbf{x}}_{\mbox{\scriptsize{avg}}}(t)\right)\nonumber\\
						&\mathbf{J}_{t}=\mathcal{S}_{\mbox{\scriptsize{avg}}}\mathcal{P}\mathbf{G}_{H}\mathbf{R}^{-1}\mathbf{\gamma}(t).
						\end{align}}
					\noindent We rewrite the recursion for $\{\mathbf{z}_{t}\}$ as follows:
					{\small\begin{align}
						\label{eq:l3_2}
						\mathbf{z}_{t+1} = \left(\mathbf{I}_{N}-\alpha_{t}\Gamma_{t}\right)\mathbf{z}_{t}+(t+1)^{-3/2}\mathbf{T}_{t}+(t+1)^{-1}\mathbf{\Phi}_{t}\mathbf{V}_{t},
						\end{align}}
					where
					{\small\begin{align}
						\label{eq:l3_3}
						&\mathbf{\Gamma}_{t} = \mathbf{\Gamma} =\mathbf{Q}\sum_{n=1}^{N}\mathcal{P}_{\mathcal{I}_n}\mathbf{H}_{n}^{\top}\mathbf{R}^{-1}\mathbf{H}_{n}\mathcal{P}_{\mathcal{I}_n}, \mathbf{\Phi}_{t} = a\mathbf{I}\nonumber\\
						&\mathbf{T}_{t}=a(t+1)^{1/2}\mathbf{U}_{t}\nonumber\\& = a\mathbf{Q}\sum_{n=1}^{N}\mathcal{P}_{\mathcal{I}_n}\mathbf{H}_{n}^{\top}\mathbf{R}_{n}^{-1}\mathbf{H}_{n}(t+1)^{0.5}\left(\widetilde{\mathbf{x}}_{n}(t)-\mathcal{P}_{\mathcal{I}_n}\widetilde{\mathbf{x}}_{\mbox{\scriptsize{avg}}}(t)\right)\xrightarrow{t\to\infty}0\nonumber\\
						&\mathbf{V}_{t}=\mathbf{J}_{t}=\mathcal{S}_{\mbox{\scriptsize{avg}}}\mathcal{P}\mathbf{G}_{H}\mathbf{R}^{-1}\mathbf{\gamma}(t),~\mathbb{E}\left[\mathbf{V}_{t}|\mathcal{F}_{t}\right]=0,\nonumber\\&\mathbb{E}\left[\mathbf{V}_{t}\mathbf{V}^{\top}_{t}|\mathcal{F}_{t}\right]= \mathcal{S}_{\mbox{\scriptsize{avg}}}\mathcal{P}\mathbf{G}_{H}\mathbf{R}^{-1}\mathbf{G}_{H}^{\top}\mathcal{P}\mathcal{S}_{\mbox{\scriptsize{avg}}}\nonumber\\& = \mathbf{Q}\left(\sum_{n=1}^{N}\mathcal{P}_{\mathcal{I}_n}\mathbf{H}_{n}^{\top}\mathbf{R}^{-1}\mathbf{H}_{n}\mathcal{P}_{\mathcal{I}_n}\right)\mathbf{Q}
						\end{align}}
					\noindent Due to the i.i.d nature of the noise process, we have the uniform integrability condition for the process $\{\mathbf{V}_{t}\}$. Hence, $\{\mathbf{x}_{\mbox{\scriptsize{avg}}}(t)\}$ falls under the purview of Lemma \ref{main_res_l0} and we thus conclude that
					{\small\begin{align}
						\label{eq:l3_4}
						(t+1)^{1/2}\left(\widetilde{\mathbf{x}}_{\mbox{\scriptsize{avg}}}(t)-\btheta\right)\overset{\mathcal{D}}{\Longrightarrow}\mathcal{N}(0,\mathbf{P}\mathbf{M}\mathbf{P}^{\top}),
						\end{align}}
					in which,
					{\small\begin{align}
						\label{eq:th2_21}
						&\left[\mathbf{M}\right]_{ij}=\left[\mathbf{P}\mathbf{Q}\left(\sum_{n=1}^{N}\mathcal{P}_{\mathcal{I}_n}\mathbf{H}_{n}^{\top}\mathbf{R}_{n}^{-1}\mathbf{H}_{n}\mathcal{P}_{\mathcal{I}_n}\right)\mathbf{Q}\mathbf{P}\right]_{ij}\nonumber\\&\times\left(\left[\mathbf{\Lambda}\right]_{ii}+\left[\mathbf{\Lambda}\right]_{jj}-1\right)^{-1},
						\end{align}}
					where $\mathbf{P}$ and $\mathbf{\Lambda}$ are orthonormal and diagonal matrices such that $\mathbf{P}^{\top}\mathbf{Q}\left(\sum_{n=1}^{N}\mathcal{P}_{\mathcal{I}_n}\mathbf{H}_{n}^{\top}\mathbf{R}_{n}^{-1}\mathbf{H}_{n}\mathcal{P}_{\mathcal{I}_n}\right)\mathbf{Q}\mathbf{P} = \mathbf{\Lambda}$.
					\noindent Now from \eqref{eq:l2_18}, we have that the processes $\{\widetilde{\mathbf{x}}_{n}(t)\}$ and $\{\widetilde{\mathbf{x}}_{\mbox{\scriptsize{avg}}}(t)\}$ are indistinguishable in the $(t+1)^{1/2}$ time scale, which is formalized as follows:
					{\small\begin{align}
						\label{eq:l3_7}
						&\mathbb{P}_{\btheta}\left(\lim_{t\to\infty}\left\|\sqrt{t+1}\left(\widetilde{\mathbf{x}}(t)-\btheta\right)-\sqrt{t+1}\left(\widetilde{\mathbf{x}}_{\mbox{\scriptsize{avg}}}(t)-\btheta\right)\right\|=0\right)\nonumber\\
						&=\mathbb{P}_{\btheta}\left(\lim_{t\to\infty}\left\|\sqrt{t+1}\left(\widetilde{\mathbf{x}}(t)-\widetilde{\mathbf{x}}_{\mbox{\scriptsize{avg}}}(t)\right)\right\|=0\right)=1.
						\end{align}}
					\noindent Thus, the difference of the sequences $\left\{\sqrt{t+1}\left(\widetilde{\mathbf{x}}_{n}(t)-\btheta\right)\right\}$ and $\left\{\sqrt{t+1}\left(\widetilde{\mathbf{x}}_{\mbox{\scriptsize{avg}}}(t)-\btheta\right)\right\}$ converges a.s. to zero as $t\rightarrow\infty$ \noindent and hence we have,
					{\small\begin{align}
						\label{eq:t2_pr_12}
						\sqrt{t+1}\left(\widetilde{\mathbf{x}}_{n}(t)-\btheta\right)\overset{\mathcal{D}}{\Longrightarrow}\mathcal{N}(0,\mathbf{P}\mathbf{M}\mathbf{P}^{\top}).
						\end{align}}	
				\end{proof}
				\section{CONCLUSION}
				\label{sec:conc}
				\noindent In this paper, we have proposed a $\emph{consensus}+\emph{innovations}$ type algorithm, $\mathcal{CIRFE}$, for estimating a high-dimensional parameter or field that exhibits a cyber-physical flavor. In the proposed algorithm, every agent updates its estimate of a few components of the high-dimensional parameter vector by simultaneous processing of neighborhood information and local newly sensed information and in which the inter-agent collaboration is restricted to a possibly sparse communication graph. Under rather generic assumptions we establish the consistency of the parameter estimate sequence and characterize the asymptotic variance of the proposed estimator. A natural direction for future research consists of considering models with non-linear observation functions and extension of the proposed algorithm $\mathcal{CIRFE}$ to quantized communication schemes in the lines of \cite{kar2012distributed} and \cite{Zhang2017DistributedDO}.

				\bibliographystyle{IEEEtran}
				\bibliography{CentralBib,dsprt,glrt}

% Generated by IEEEtran.bst, version: 1.14 (2015/08/26)
\begin{thebibliography}{10}
\providecommand{\url}[1]{#1}
\csname url@samestyle\endcsname
\providecommand{\newblock}{\relax}
\providecommand{\bibinfo}[2]{#2}
\providecommand{\BIBentrySTDinterwordspacing}{\spaceskip=0pt\relax}
\providecommand{\BIBentryALTinterwordstretchfactor}{4}
\providecommand{\BIBentryALTinterwordspacing}{\spaceskip=\fontdimen2\font plus
\BIBentryALTinterwordstretchfactor\fontdimen3\font minus
  \fontdimen4\font\relax}
\providecommand{\BIBforeignlanguage}[2]{{%
\expandafter\ifx\csname l@#1\endcsname\relax
\typeout{** WARNING: IEEEtran.bst: No hyphenation pattern has been}%
\typeout{** loaded for the language `#1'. Using the pattern for}%
\typeout{** the default language instead.}%
\else
\language=\csname l@#1\endcsname
\fi
#2}}
\providecommand{\BIBdecl}{\relax}
\BIBdecl

\bibitem{ahuja1988network}
R.~K. Ahuja, T.~L. Magnanti, and J.~B. Orlin, ``Network flows,'' 1988.

\bibitem{mota2015distributed}
J.~F. Mota, J.~M. Xavier, P.~M. Aguiar, and M.~P{\"u}schel, ``Distributed
  optimization with local domains: Applications in mpc and network flows,''
  \emph{IEEE Transactions on Automatic Control}, vol.~60, no.~7, pp.
  2004--2009, 2015.

\bibitem{halvgaard2016distributed}
R.~Halvgaard, L.~Vandenberghe, N.~K. Poulsen, H.~Madsen, and J.~B.
  J{\o}rgensen, ``Distributed model predictive control for smart energy
  systems,'' \emph{IEEE Transactions on Smart Grid}, vol.~7, no.~3, pp.
  1675--1682, 2016.

\bibitem{Khan-DILAND-TSP-2010}
U.~A. Khan, S.~Kar, and J.~M.~F. Moura, ``{DILAND}: An algorithm for
  distributed sensor localization with noisy distance measurements,''
  \emph{IEEE Transactions on Signal Processing}, vol.~58, no.~3, pp. 1940 --
  1947, March 2010.

\bibitem{kekatos2013distributed}
V.~Kekatos and G.~B. Giannakis, ``Distributed robust power system state
  estimation,'' \emph{IEEE Transactions on Power Systems}, vol.~28, no.~2, pp.
  1617--1626, 2013.

\bibitem{de2010synchronized}
J.~De~La~Ree, V.~Centeno, J.~S. Thorp, and A.~G. Phadke, ``Synchronized phasor
  measurement applications in power systems,'' \emph{IEEE Transactions on smart
  grid}, vol.~1, no.~1, pp. 20--27, 2010.

\bibitem{kar2011convergence}
S.~Kar and J.~M. Moura, ``Convergence rate analysis of distributed gossip
  (linear parameter) estimation: Fundamental limits and tradeoffs,'' \emph{IEEE
  Journal of Selected Topics in Signal Processing}, vol.~5, no.~4, pp.
  674--690, 2011.

\bibitem{kar2013consensus+}
------, ``Consensus+ innovations distributed inference over networks:
  cooperation and sensing in networked systems,'' \emph{IEEE Signal Processing
  Magazine}, vol.~30, no.~3, pp. 99--109, 2013.

\bibitem{Mesbahi-parameter}
A.~Das and M.~Mesbahi, ``Distributed linear parameter estimation in sensor
  networks based on {L}aplacian dynamics consensus algorithm,'' in \emph{3rd
  Annual IEEE Communications Society on Sensor and Ad Hoc Communications and
  Networks}, vol.~2, Reston, VA, USA, 28-28 Sept. 2006, pp. 440--449.

\bibitem{Giannakis-est}
I.~D. Schizas, A.~Ribeiro, and G.~B. Giannakis, ``Consensus in {A}d {H}oc
  {WSN}s with noisy links - part {I}: Distributed estimation of deterministic
  signals,'' \emph{IEEE Transactions on Signal Processing}, vol.~56, no.~1, pp.
  350--364, January 2008.

\bibitem{Sayed-LMS}
C.~G. Lopes and A.~H. Sayed, ``Diffusion least-mean squares over adaptive
  networks: Formulation and performance analysis,'' \emph{IEEE Transactions on
  Signal Processing}, vol.~56, no.~7, pp. 3122--3136, July 2008.

\bibitem{Stankovic-parameter}
S.~Stankovic, M.~Stankovic, and D.~Stipanovic, ``Decentralized parameter
  estimation by consensus based stochastic approximation,'' in \emph{46th IEEE
  Conference on Decision and Control}, New Orleans, LA, USA, 12-14 Dec. 2007,
  pp. 1535--1540.

\bibitem{Giannakis-LMS}
I.~Schizas, G.~Mateos, and G.~Giannakis, ``Stability analysis of the
  consensus-based distributed {LMS} algorithm,'' in \emph{Proceedings of the
  33rd International Conference on Acoustics, Speech, and Signal Processing},
  Las Vegas, Nevada, USA, April 1-4 2008, pp. 3289--3292.

\bibitem{Nedic-parameter}
S.~Ram, V.~Veeravalli, and A.~Nedic, ``Distributed and recursive parameter
  estimation in parametrized linear state-space models,'' \emph{IEEE
  Transactions on Automatic Control}, vol.~55, no.~2, pp. 488-- 492, February
  2010.

\bibitem{sahu2016distributed}
A.~K. Sahu and S.~Kar, ``Distributed sequential detection for {G}aussian
  shift-in-mean hypothesis testing,'' \emph{IEEE Transactions on Signal
  Processing}, vol.~64, no.~1, pp. 89--103, 2016.

\bibitem{nevelson1973stochastic}
M.~B. Nevelson and R.~Z. Khasʹminski{\u\i}, \emph{Stochastic approximation and
  recursive estimation}.\hskip 1em plus 0.5em minus 0.4em\relax American
  Mathematical Society, 1973, vol.~47.

\bibitem{Bertsekas-survey}
D.~Bertsekas, J.~Tsitsiklis, and M.~Athans, ``Convergence theories of
  distributed iterative processes: A survey,'' \emph{Technical Report for
  Information and Decision Systems, Massachusetts Inst. of Technology,
  Cambridge, MA}, 1984.

\bibitem{olfatisaberfaxmurray07}
R.~Olfati-Saber, J.~A. Fax, and R.~M. Murray, ``Consensus and cooperation in
  networked multi-agent systems,'' \emph{Proceedings of the IEEE}, vol.~95,
  no.~1, pp. 215--233, January 2007.

\bibitem{jadbabailinmorse03}
A.~Jadbabaie, J.~Lin, and A.~S. Morse, ``Coordination of groups of mobile
  autonomous agents using nearest neighbor rules,'' \emph{IEEE Transactions on
  Automatic Control}, vol.~48, no.~6, pp. 988--1001, Jun. 2003.

\bibitem{nedic2015nonasymptotic}
A.~Nedi{\'c}, A.~Olshevsky, and C.~A. Uribe, ``Nonasymptotic convergence rates
  for cooperative learning over time-varying directed graphs,'' in
  \emph{American Control Conference (ACC), 2015}.\hskip 1em plus 0.5em minus
  0.4em\relax IEEE, 2015, pp. 5884--5889.

\bibitem{jadbabaie2012non}
A.~Jadbabaie, P.~Molavi, A.~Sandroni, and A.~Tahbaz-Salehi, ``Non-{B}ayesian
  social learning,'' \emph{Games and Economic Behavior}, vol.~76, no.~1, pp.
  210--225, 2012.

\bibitem{shahrampour2013exponentially}
S.~Shahrampour and A.~Jadbabaie, ``Exponentially fast parameter estimation in
  networks using distributed dual averaging,'' in \emph{Decision and Control
  (CDC), 2013 IEEE 52nd Annual Conference on}.\hskip 1em plus 0.5em minus
  0.4em\relax IEEE, 2013, pp. 6196--6201.

\bibitem{mateos2009distributed}
G.~Mateos, I.~D. Schizas, and G.~B. Giannakis, ``Distributed recursive
  least-squares for consensus-based in-network adaptive estimation,''
  \emph{IEEE Transactions on Signal Processing}, vol.~57, no.~11, pp.
  4583--4588, 2009.

\bibitem{mateos2012distributed}
G.~Mateos and G.~B. Giannakis, ``Distributed recursive least-squares: Stability
  and performance analysis,'' \emph{IEEE Transactions on Signal Processing},
  vol.~60, no.~7, pp. 3740--3754, 2012.

\bibitem{weng2014efficient}
Z.~Weng and P.~M. Djuri{\'c}, ``Efficient estimation of linear parameters from
  correlated node measurements over networks,'' \emph{IEEE Signal Processing
  Letters}, vol.~21, no.~11, pp. 1408--1412, 2014.

\bibitem{cattivelli2010diffusion}
F.~S. Cattivelli and A.~H. Sayed, ``Diffusion lms strategies for distributed
  estimation,'' \emph{IEEE Transactions on Signal Processing}, vol.~58, no.~3,
  pp. 1035--1048, 2010.

\bibitem{cattivelli2008diffusion}
F.~S. Cattivelli, C.~G. Lopes, and A.~H. Sayed, ``Diffusion recursive
  least-squares for distributed estimation over adaptive networks,'' \emph{IEEE
  Transactions on Signal Processing}, vol.~56, no.~5, pp. 1865--1877, 2008.

\bibitem{Khan-Moura}
U.~A. Khan and J.~M.~F. Moura, ``Distributing the {K}alman filter for
  large-scale systems,'' \emph{IEEE Transactions on Signal Processing},
  vol.~56, no.~10, p. 4919�4935, October 2008.

\bibitem{das2015distributed}
S.~Das and J.~M.~F. Moura, ``Distributed {K}alman filtering with dynamic
  observations consensus,'' \emph{IEEE Transactions on Signal Processing},
  vol.~63, no.~17, pp. 4458--4473, 2015.

\bibitem{bogdanovic2014distributed}
N.~Bogdanovic, J.~Plata-Chaves, and K.~Berberidis, ``Distributed
  diffusion-based lms for node-specific parameter estimation over adaptive
  networks,'' in \emph{Acoustics, Speech and Signal Processing (ICASSP), 2014
  IEEE International Conference on}.\hskip 1em plus 0.5em minus 0.4em\relax
  IEEE, 2014, pp. 7223--7227.

\bibitem{nassif2017diffusion}
R.~Nassif, C.~Richard, A.~Ferrari, and A.~H. Sayed, ``Diffusion {LMS} for
  multitask problems with local linear equality constraints,'' \emph{IEEE
  Transactions on Signal Processing}, vol.~65, no.~19, pp. 4979--4993, 2017.

\bibitem{kar2010large}
\BIBentryALTinterwordspacing
S.~Kar, ``Large scale networked dynamical systems: Distributed inference,''
  Ph.D. dissertation, Carnegie Mellon University, Pittsburgh, PA, 2010.
  [Online]. Available: \url{http://gradworks.umi.com/34/21/3421734.html}
\BIBentrySTDinterwordspacing

\bibitem{chung1997spectral}
F.~R. Chung, \emph{Spectral graph theory}.\hskip 1em plus 0.5em minus
  0.4em\relax American Mathematical Soc., 1997, vol.~92.

\bibitem{kar2013distributed}
S.~Kar, J.~M.~F. Moura, and H.~V. Poor, ``Distributed linear parameter
  estimation: Asymptotically efficient adaptive strategies,'' \emph{SIAM
  Journal on Control and Optimization}, vol.~51, no.~3, pp. 2200--2229, 2013.

\bibitem{sahu2016distributedtsipn}
A.~K. Sahu, S.~Kar, J.~M. Moura, and H.~V. Poor, ``Distributed constrained
  recursive nonlinear least-squares estimation: Algorithms and asymptotics,''
  \emph{IEEE Transactions on Signal and Information Processing over Networks},
  vol.~2, no.~4, pp. 426--441, 2016.

\bibitem{alghunaim2017distributed}
S.~A. Alghunaim and A.~H. Sayed, ``Distributed coupled multi-agent stochastic
  optimization,'' \emph{arXiv preprint arXiv:1712.08817}, 2017.

\bibitem{Borkar-stochapp}
V.~S. Borkar, \emph{Stochastic Approximation: A Dynamical Systems
  Viewpoint}.\hskip 1em plus 0.5em minus 0.4em\relax Cambridge, UK: Cambridge
  University Press, 2008.

\bibitem{Gelfand-Mitter}
S.~B. Gelfand and S.~K. Mitter, ``Recursive stochastic algorithms for global
  optimization in $\mathbb{R}^d$,'' \emph{{SIAM} J. Control Optim.}, vol.~29,
  no.~5, pp. 999--1018, September 1991.

\bibitem{Fabian-2}
V.~Fabian, ``On asymptotic normality in stochastic approximation,'' \emph{The
  Annals of Mathematical Statistics}, vol.~39, no.~4, pp. 1327--1332, August
  1968.

\bibitem{kar2012distributed}
S.~Kar, J.~M. Moura, and K.~Ramanan, ``Distributed parameter estimation in
  sensor networks: Nonlinear observation models and imperfect communication,''
  \emph{IEEE Transactions on Information Theory}, vol.~58, no.~6, pp.
  3575--3605, 2012.

\bibitem{Zhang2017DistributedDO}
J.~Zhang, K.~You, and T.~Basar, ``Distributed discrete-time optimization in
  multi-agent networks using only sign of relative state,'' \emph{arXiv
  preprint arXiv:1709.08360}, 2017.

\end{thebibliography}

			\end{document}